\documentclass[a4paper]{scrartcl}

\usepackage[utf8]{luainputenc}
\usepackage[USenglish]{babel}
\usepackage{csquotes}

\usepackage[a4paper, top=2.5cm, bottom=2.5cm, left=2.5cm, right=2.5cm]{geometry}
\usepackage[plainpages=false,pdfpagelabels,hidelinks,unicode]{hyperref}

\usepackage[%
  backend=biber,
  style=numeric-comp,
  giveninits=true, uniquename=init, 
  natbib=true,
  url=true,
  doi=true,
  isbn=false,
  backref=false,
  maxnames=99,
  ]{biblatex}
\addbibresource{references.bib}

\usepackage{amsmath}
\numberwithin{equation}{section}
\allowdisplaybreaks
\usepackage{amssymb}
\usepackage{commath}
\usepackage{mathtools}
\usepackage{bbm}
\usepackage{nicefrac}

\usepackage{siunitx}

\usepackage{amsthm}
\theoremstyle{plain}
  \newtheorem{theorem}{Theorem}
  \newtheorem{lemma}[theorem]{Lemma}

\theoremstyle{definition}
  
  \newtheorem{remark}[theorem]{Remark}
  
\numberwithin{theorem}{section}

\usepackage{color}
\usepackage{graphicx}
\usepackage[small]{caption}
\usepackage{subcaption}

\begingroup\expandafter\expandafter\expandafter\endgroup
\expandafter\ifx\csname pdfsuppresswarningpagegroup\endcsname\relax
\else
  \pdfsuppresswarningpagegroup=1\relax
\fi

\usepackage{booktabs}
\usepackage{rotating}
\usepackage{multirow}


\usepackage{ifluatex}
\ifluatex
  \usepackage[no-math]{fontspec}
\else
  \usepackage[T1]{fontenc}
\fi
\usepackage{newpxtext,newpxmath}

\newcommand{\N}{\mathbb{N}}

\renewcommand{\O}{\mathcal{O}}
\newcommand{\dt}{{\Delta t}}

\newcommand{\scp}[2]{\left\langle{#1,\, #2}\right\rangle}
\newcommand{\I}{\operatorname{I}}
\newcommand{\lot}{\ell_{\mathrm{ot}}}

\usepackage{forest}
\forestset{
  */.style={
    delay+={append={[]},}
  },
  rooted tree/.style={
    for tree={
      grow'=90,
      parent anchor=center,
      child anchor=center,
      s sep=2.5pt,
      if level=0{
        baseline
      }{},
      delay={
        if content={*}{
          content=,
          append={[]}
        }{}
      }
    },
    before typesetting nodes={
      for tree={
        circle,
        fill,
        minimum width=3pt,
        inner sep=0pt,
        child anchor=center,
      },
    },
    before computing xy={
      for tree={
        l=5pt,
      }
    }
  }
}
\DeclareDocumentCommand\rootedtree{o}{\Forest{rooted tree [#1]}}

\DeclarePairedDelimiter\floor{\lfloor}{\rfloor}

\newenvironment{keywords}{\par\textbf{Key words.}}{\par}
\newenvironment{AMS}{\par\textbf{AMS subject classification.}}{\par}

\title{Relaxation Runge--Kutta Methods\texorpdfstring{\\}{ }for Hamiltonian Problems}
\author{Hendrik Ranocha, David I. Ketcheson}
\date{June 4, 2020}

\makeatletter
\hypersetup{pdfauthor={\@author}}
\hypersetup{pdftitle={\@title}}
\makeatother

\begin{document}

\maketitle

\begin{abstract}
The recently-introduced relaxation approach for Runge--Kutta methods can
be used to enforce conservation of energy in the integration of Hamiltonian
systems. We study the behavior of implicit and explicit relaxation
Runge--Kutta methods in this context. We find that, in addition to their
useful conservation property, the relaxation methods yield other improvements.
Experiments show that their solutions bear stronger qualitative similarity
to the true solution and that the error grows more slowly in time. We also
prove that these methods are superconvergent for a certain class of Hamiltonian
systems.

\end{abstract}

\begin{keywords}
  Runge--Kutta methods,
  relaxation Runge--Kutta methods,
  Hamiltonian problems,
  energy conservation,
  structure preservation,
  geometric numerical integration
\end{keywords}

\begin{AMS}
  65L06,  
  65L20,  
  65M12,  
  65M70,  
  65P10,  
  37M99   
\end{AMS}

\section{Introduction}
\label{sec:introduction}
Many important differential equations possess one or more quantities,
known as invariants, that remain constant in time. Common examples include
total mass, momentum, or energy. In the numerical integration
of initial value problems, it is often important
that this invariance be preserved, not just
to the accuracy of truncation error, but up to machine precision.
The term \emph{geometric numerical integration} is widely used to refer
to numerical methods that preserve invariants, and such methods are
the subject of much study; see \cite{hairer2006geometric} and references therein.

Linear invariant quantities (such as total mass in many models) are preserved
by all general linear methods, including all Runge-Kutta and linear multistep methods.
Nonlinear invariants are not generally preserved by such methods, and in
particular they are not preserved by explicit or diagonally implicit
Runge-Kutta methods, which are otherwise the methods of choice for many
classes of initial value problems.

In the present work, we focus on a simple modification that can be used
to make any Runge-Kutta method preserve a desired nonlinear invariant,
while retaining its order of accuracy and other useful properties.
The resulting methods, known as relaxation Runge-Kutta (RRK) methods,
have been developed and analyzed in
\cite{ketcheson2019relaxation,ranocha2020relaxation}, where
the focus was on preserving dissipative properties with explicit methods. Herein we study
in detail this approach applied to problems with a nonlinear conserved quantity;
we also consider implicit RRK methods. We focus specifically on problems
that can be written as a Hamiltonian system:
\begin{equation}
\label{eq:Hamiltonian}
  \od{}{t} q(t) = \partial_p H\bigl( q(t), p(t) \bigr),
  \quad
  \od{}{t} p(t) = - \partial_q H\bigl( q(t), p(t) \bigr).
\end{equation}
Here $H(q,p)$ is the Hamiltonian and often represents energy.
We refer to methods that preserve $H$ (to machine
precision) as \emph{energy-preserving}.

\subsection{Energy-Preserving Methods}
Symplectic Runge-Kutta methods preserve arbitrary quadratic invariants \cite{cooper1987stability},
and possess other properties desirable in geometric integration. However,
no Runge-Kutta method preserves arbitrary nonlinear invariants \cite{iserles2000preserving}.
Nevertheless, there exist various Runge-Kutta-like methods that are
energy-preserving.

Projection methods (see \cite[Section IV.4]{hairer2006geometric}) achieve
energy preservation by projecting the solution orthogonally onto the energy-conservative manifold
at the end of each step. This projection destroys linear covariance and has been
shown to give poor long-term results for some problems. Furthermore, the construction
of the method is specific to a given problem. Discrete gradients
\cite{gonzalez1996time,mclachlan1999geometric} provide a more systematic approach,
based on introducing a carefully-designed discrete approximation of the gradient
of the Hamiltonian, yielding a discrete analog of the chain rule that ensures energy
conservation. More recently, this has led to the averaged vector field (AVF)
method \cite{quispel2008new} and subsequent approaches referred to as
continuous-stage Runge-Kutta (CSRK) methods \cite{hairer2010energy}.
Methods based on the AVF or CSRK approach have a computational cost similar to that
of fully-implicit Runge-Kutta methods, as they require the solution of
a system of algebraic equations of the size of the number of ODEs multiplied by the
number of stages.
In contrast, projection and RRK methods based on explicit Runge-Kutta methods
require only the solution of a single scalar algebraic equation at each time step;
if the Hamiltonian is generated by an inner product, then this equation has
a known explicit solution.
RRK methods based on diagonally implicit Runge-Kutta methods also promise to be
more efficient than most of the above-mentioned methods, since the stage equations
can be solved sequentially.

Many energy-preserving methods have been shown to enjoy favorable properties such
as improved overall accuracy and qualitative solution behavior
\cite{calvo1993development,portillo1995lack,cano1997error,calvo2011error}.
It is natural to ask which of the good (or bad) properties of these related
schemes are shared by RRK methods. This is the main focus of the present work.

The remainder of this paper is organized as follows. In Section~\ref{sec:RRK}
we review RRK methods.  In Section~\ref{sec:qual} we study the qualitative
behavior of RRK methods, focusing particularly on the preservation of orbits.
In Section~\ref{sec:accuracy} we study the accuracy of RRK methods for Hamiltonian
systems and prove two advantageous properties in this regard. In Section~\ref{sec:n-body}
we consider the application of RRK methods to $N$-body problems in celestial
mechanics and molecular dynamics. In Section~\ref{sec:conclusion} we present
the outlook for application of RRK methods to Hamiltionian systems and some
planned future work. Proofs of some technical results are found in the appendix.

\section{Relaxation Runge-Kutta Methods}
\label{sec:RRK}

Relaxation Runge-Kutta methods have been developed recently in
\cite{ketcheson2019relaxation,ranocha2020relaxation},
although the idea behind them goes back much further
\cite{dekker1984stability,calvo2006preservation,calvo2010projection}.
Given a Runge-Kutta method
\begin{subequations}
\label{eq:RK-step}
\begin{align}
\label{eq:RK-stages}
  y_i
  &=
  u^n + \dt \sum_{j=1}^{s} a_{ij} \, f(t_n + c_j \dt, y_j),
  \qquad i \in \set{1, \dots, s},
  \\
\label{eq:RK-final}
  u(t_n + \dt) \approx u^{n+1}
  &=
  u^n + \dt \sum_{i=1}^{s} b_{i} \, f(t_n + c_i \dt, y_i),
\end{align}
\end{subequations}
we define
\begin{align}
d^n := \sum_{i=1}^{s} b_{i} f_i,
\end{align}
where we use the shorthand $f_i := f(t_n + c_i \dt, y_i)$.
The relaxation idea is to replace the update formula \eqref{eq:RK-final} with
an update in the same direction but of a (possibly) different length:
\begin{align}
  u(t_n + \gamma^n\dt) \approx u^{n+1}_\gamma &= u^n + \gamma^n \dt d^n.
\end{align}
Here $\gamma^n$ is referred to as the relaxation parameter (by analogy with
iterative algebraic solvers) and is chosen to ensure exact
satisfaction of some qualitative property. In the present context, we
assume there is an invariant $H(u)$ (the Hamiltonian, or energy, of the
system) and choose $\gamma^n$ so that
\begin{align} \label{discrete_invariant}
  H(u^{n+1}_\gamma) = H(u^n).
\end{align}
This is a scalar equation and can usually be efficiently solved (for $\gamma^n$)
with standard rootfinding techniques. Existence of a solution is guaranteed
for small enough $\dt$ and the resulting RRK method (interpreting
$u^{n+1}_\gamma$ as approximation to $u(t_n + \gamma^n \dt)$) is of the
same order as the baseline scheme (see \cite{ranocha2020relaxation}).

The relaxation technique can also be extended to quite general time integration
methods that are at least second-order accurate \cite{ranocha2020general}.
The existence of a (useful) solution for the relaxation parameter $\gamma^n$
is in general only guaranteed if the time step is small enough.
In our experience with many different systems, this bound is usually slightly
bigger than the time step restriction for the baseline scheme to be applied
without drastic stability problems for short-time simulations. While relaxation
does not necessarily increase the maximal time step significantly, it increases
the reliability of the numerical time integrator, both in terms of qualitative
and quantitative behavior, in particular for long-time simulations.

If the underlying Runge-Kutta method is explicit, the RRK method is \emph{almost}
explicit, requiring solution of a single scalar equation per step. If the underlying
Runge-Kutta method is implicit, the additional cost of solving \eqref{discrete_invariant}
is generally quite small compared to the cost of solving the Runge-Kutta stage
equations.

We have implemented the relaxation Runge-Kutta methods used in this article
in Python, using SciPy \cite{virtanen2019scipy} to solve the scalar
non-quadratic equations for the relaxation parameter $\gamma$.
The source code for all numerical examples is available online
\cite{ranocha2020relaxationHamiltonianRepro}.

\section{Qualitative Behavior of Solutions}
\label{sec:qual}

At the most basic level, a numerical method should preserve the asymptotic
behavior of the solution in terms of whether the solution becomes unbounded
or approaches an equilibrium point. Generally, the solution of
a Hamiltonian system does neither of these. Using any energy-preserving method
typically ensures that the numerical solution does not either.

It is desirable to also preserve more specific qualitative properties of the
solution. For instance, is the behavior regular or chaotic?  Does it remain
on or very near a fixed orbit?  Does it remain inside a certain invariant set?
In this section we investigate preservation of such properties by applying
RRK methods to a few representative and well-known examples.

\subsection{Lotka-Volterra Equations}
\label{sec:qualitative-Lotka-Volterra}

While the Lotka-Volterra equations
\begin{equation}
\label{eq:Lotka-Volterra}
  \od{}{t} \begin{pmatrix} u_1(t) \\ u_2(t) \end{pmatrix}
  =
  \begin{pmatrix}
    u_1(t) (1 - u_2(t)) \\
    u_2(t) (u_1(t) - 1)
  \end{pmatrix},
  \quad
  u(0) = \begin{pmatrix} 1 \\ 2 \end{pmatrix},
\end{equation}
are not a canonical Hamiltonian system, they can be transformed into this
classical form by a change of variables and are widely studied in the
context of structure preserving numerical methods
\cite[Section~I.1.1]{hairer2006geometric}. Phase space portraits of
numerical solutions obtained via the classical fourth order method
RK(4,4) of Kutta~\cite{kutta1901beitrag} are shown in
Figure~\ref{fig:Lotka-Volterra}. There, the baseline
scheme yields a numerical approximation spiraling inwards till the final
time $t = \num{500}$. In contrast, the relaxation method preserving the
invariant
\begin{equation}
  H(u) = u_1 - \log(u_1) + u_2 - \log(u_2)
\end{equation}
shows a superior behavior in phase space.

\begin{figure}[t]
\centering
  \begin{subfigure}[b]{0.4\textwidth}
    \centering
    \includegraphics[width=\textwidth]{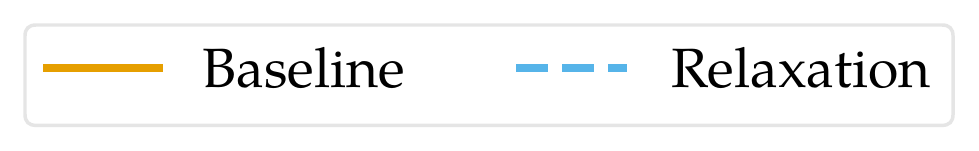}
  \end{subfigure}%
  \\
  \begin{subfigure}[b]{0.40\textwidth}
    \centering
    \includegraphics[width=\textwidth]{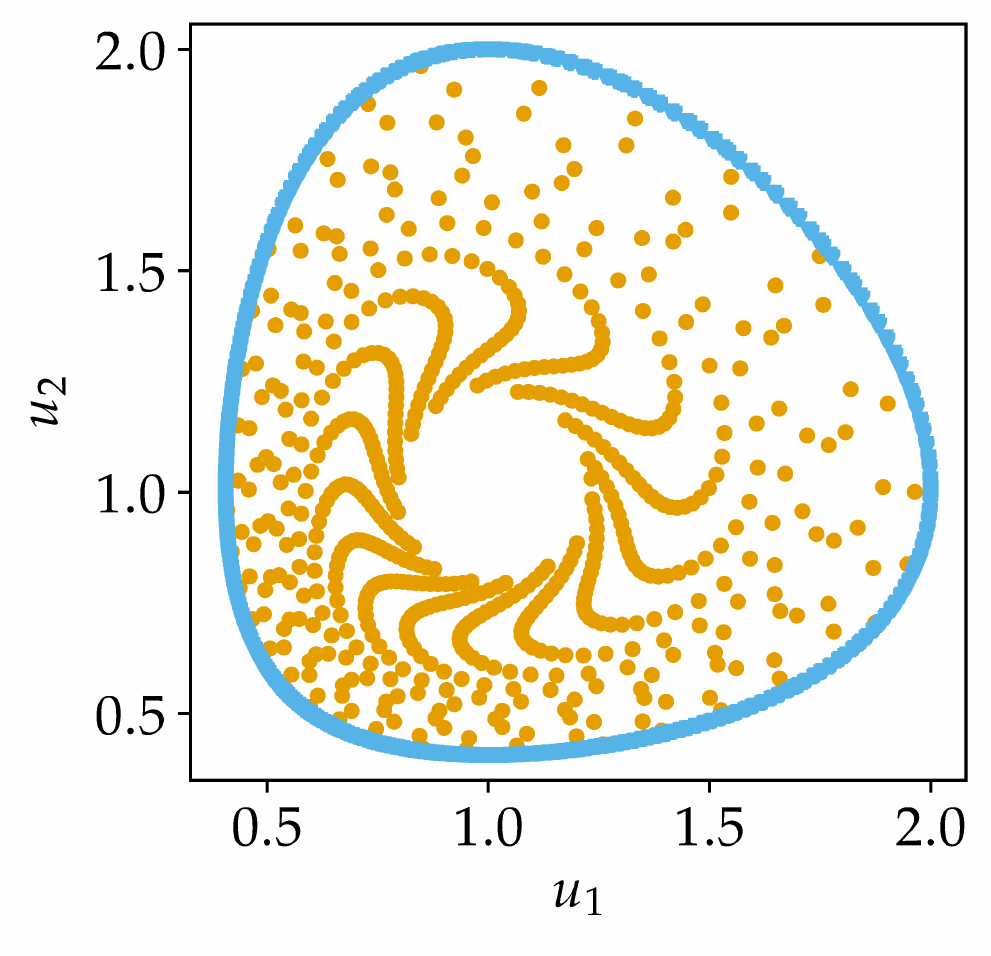}
    \caption{Numerical solutions in phase space.}
  \end{subfigure}%
  ~
  \begin{subfigure}[b]{0.59\textwidth}
    \centering
    \includegraphics[width=\textwidth]{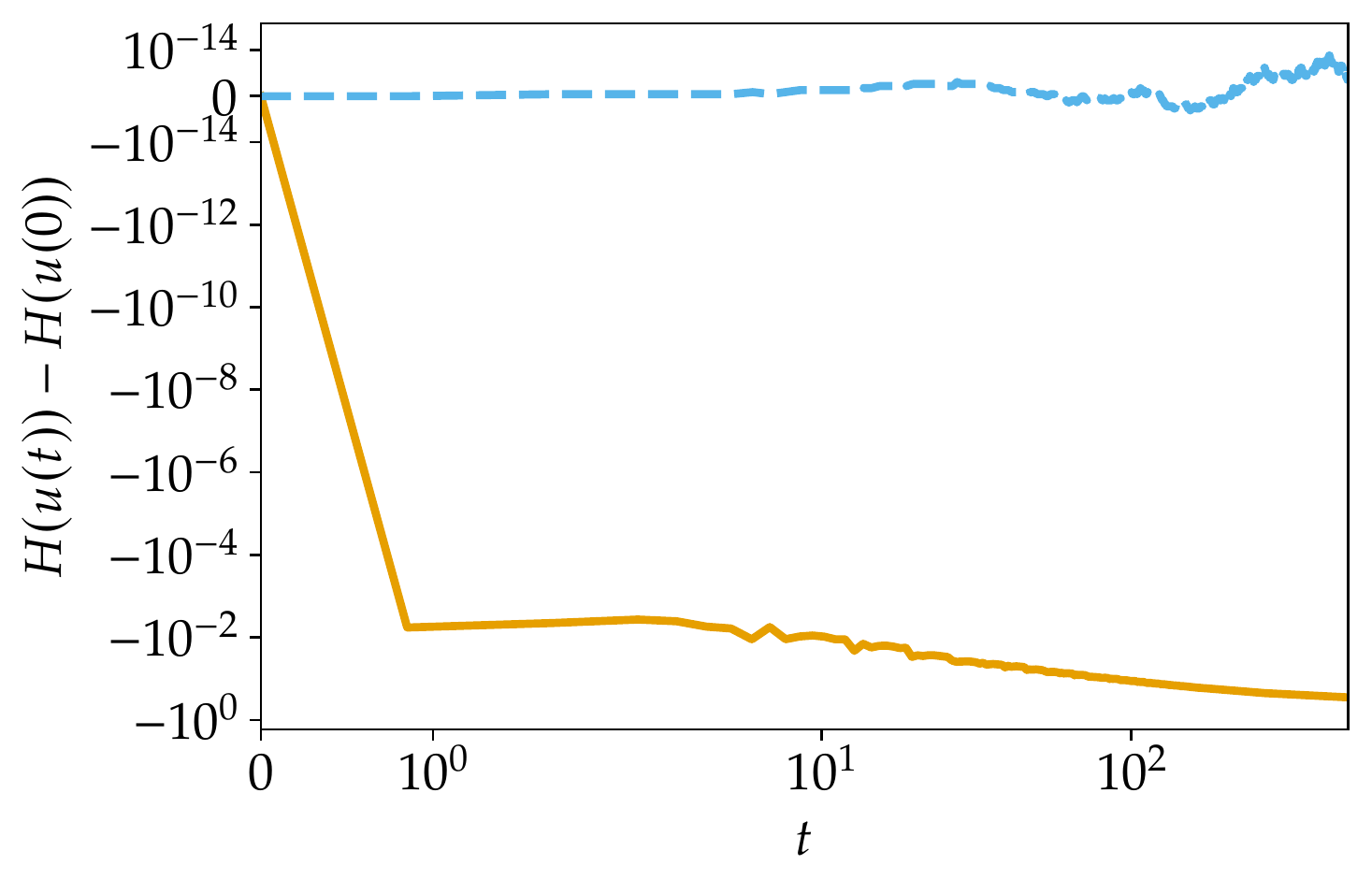}
    \caption{Variation of the Hamiltonian $H$.}
  \end{subfigure}%
  \caption{Simulation of the Lotka-Volterra system \eqref{eq:Lotka-Volterra}
           using the classical RK(4,4) method with and without relaxation
           and a time step $\dt = 0.85$.}
  \label{fig:Lotka-Volterra}
\end{figure}

\subsection{Hénon-Heiles System}
\label{sec:qualitative-Henon-Heiles}
The Hénon-Heiles system is a canonical Hamiltonian system
\eqref{eq:Hamiltonian} with Hamiltonian
\begin{equation}
\label{eq:Henon-Heiles}
  H(q,p) = \frac{p_1^2 + p_2^2}{2}
          + \frac{q_1^2 + q_2^2}{2} + q_1^2 q_2
          - \frac{q_2^3}{3}.
\end{equation}
This non-integrable system is widely studied in the literature; see e.g.
\cite[Section~1.2.6]{sanzserna1994numerical}. Depending on the initial
condition, $(q_2, p_2)$ Poincaré sections in the $q_1 = 0$ plane
can be either chaotic or show structured curves.

For the initial condition
\begin{equation}
\label{eq:Henon-Heiles-quasiperiodic}
  q_1(0) = q_2(0) = p_1(0) = p_2(0) = 0.12,
\end{equation}
the solution is quasiperiodic and two disjoint closed curves
form the Poincaré section in the $q_1 = 0$ plane
\cite[Section~1.2.6]{sanzserna1994numerical}.
This Poincaré section and the time evolution of the Hamiltonian $H$ of
numerical solutions using SSPRK(3,3) of \cite{shu1988efficient}
are shown in Figure~\ref{fig:Henon-Heiles}. Clearly, the baseline scheme
dissipates energy and does not result in a quasiperiodic solution. Its
Poincaré section is totally misleading while the relaxation scheme yields
the correct qualitative behavior.

\begin{figure}[t]
\centering
  \begin{subfigure}[b]{0.4\textwidth}
    \centering
    \includegraphics[width=\textwidth]{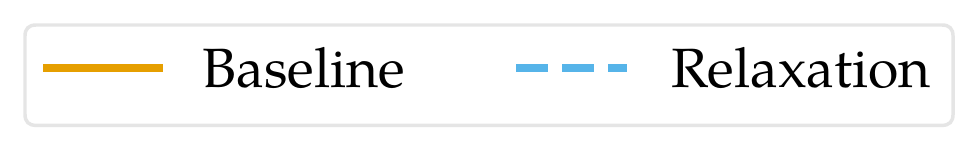}
  \end{subfigure}%
  \\
  \begin{subfigure}[b]{0.40\textwidth}
    \centering
    \includegraphics[width=\textwidth]{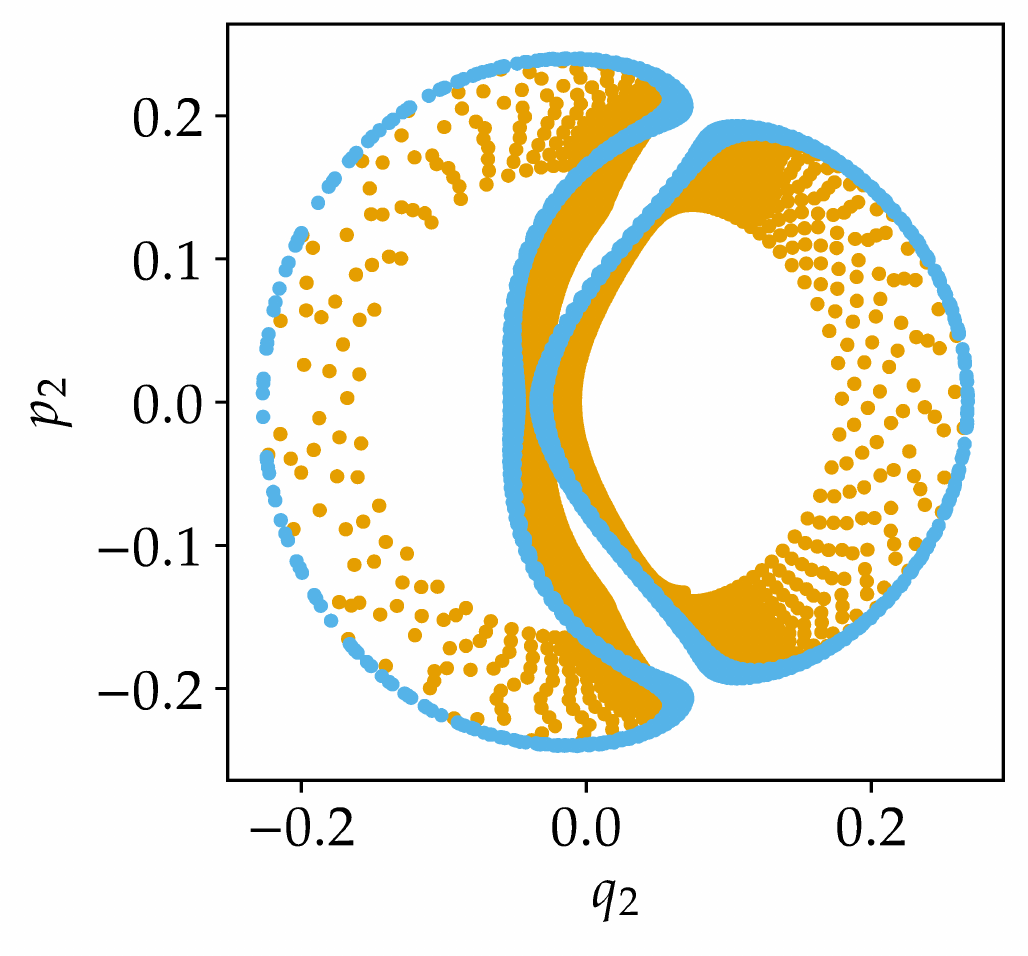}
    \caption{Poincaré sections, $q_1 = 0$.}
  \end{subfigure}%
  ~
  \begin{subfigure}[b]{0.58\textwidth}
    \centering
    \includegraphics[width=\textwidth]{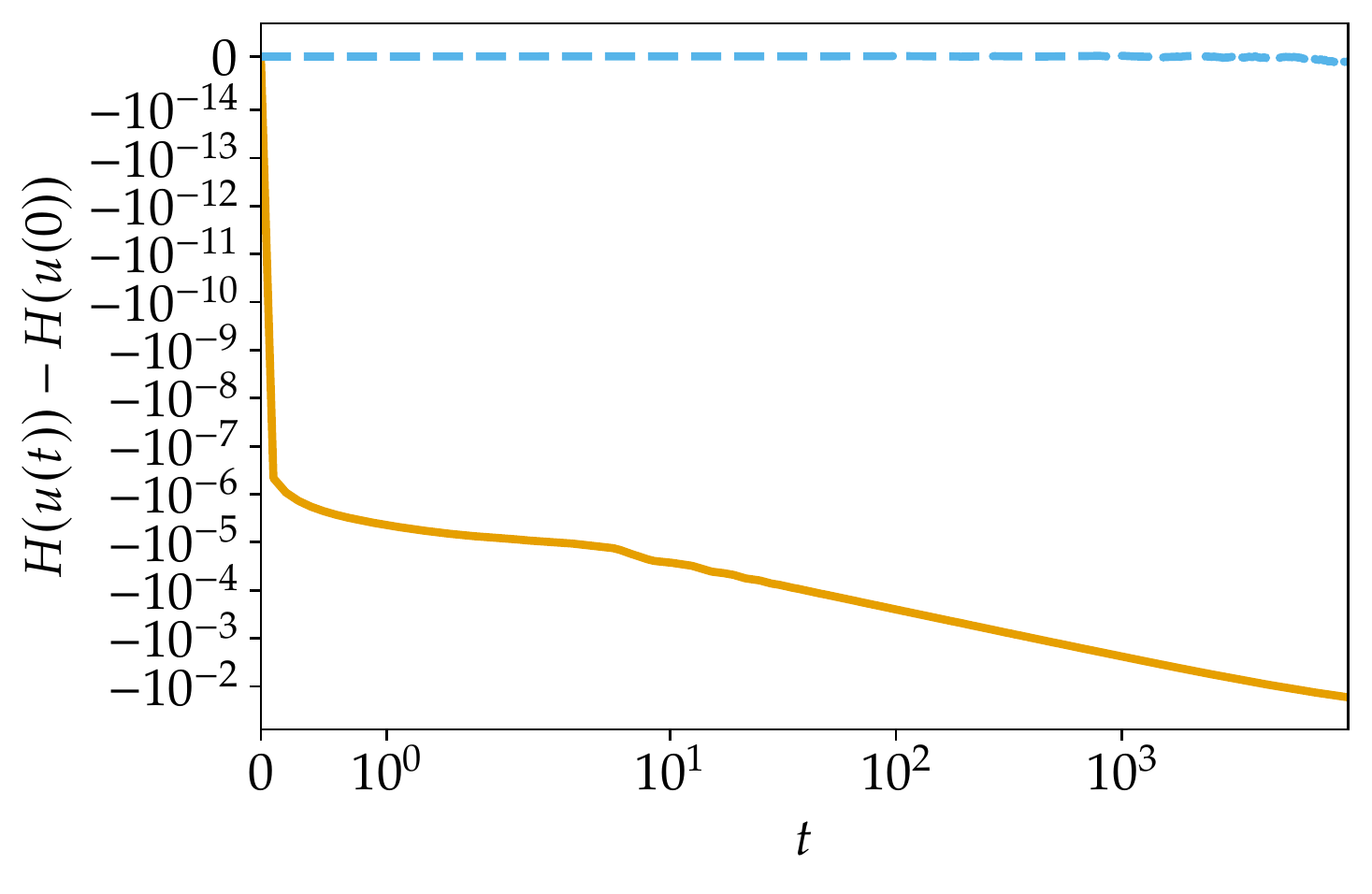}
    \caption{Variation of the Hamiltonian $H$.}
  \end{subfigure}%
  \caption{Simulation of the Hénon-Heiles system \eqref{eq:Henon-Heiles}
           with initial condition \eqref{eq:Henon-Heiles-quasiperiodic}
           using the third order method SSPRK(3,3) of \cite{shu1988efficient}
           with and without relaxation and a time step $\dt = 0.1$.}
  \label{fig:Henon-Heiles}
\end{figure}

Choosing the initial condition
\begin{equation}
\label{eq:Henon-Heiles-chaotic}
  q_1(0) = q_2(0) = p_2(0) = 0.12,
  \quad
  p_1(0) = \sqrt{2} \sqrt{0.15925},
\end{equation}
yields a chaotic solution \cite[Section~1.2.6]{sanzserna1994numerical}.
The Poincaré sections of numerical solutions computed
till the final time $t = \num{30000}$ with and without relaxation are
visualised in Figure~\ref{fig:Henon-Heiles-chaotic}.
Again, the relaxation approach results in an improved qualitative behavior
for SSPRK(3,3).  The Poincaré section
obtained using the relaxation scheme agrees with the reference plot
of \cite[Figure~1.5]{sanzserna1994numerical}. In contrast, the baseline
scheme yields a chaotic Poincaré section that is concentrated in a wrong
area.

\begin{figure}[t]
\centering
  \begin{subfigure}[b]{0.49\textwidth}
    \centering
    \includegraphics[width=\textwidth]{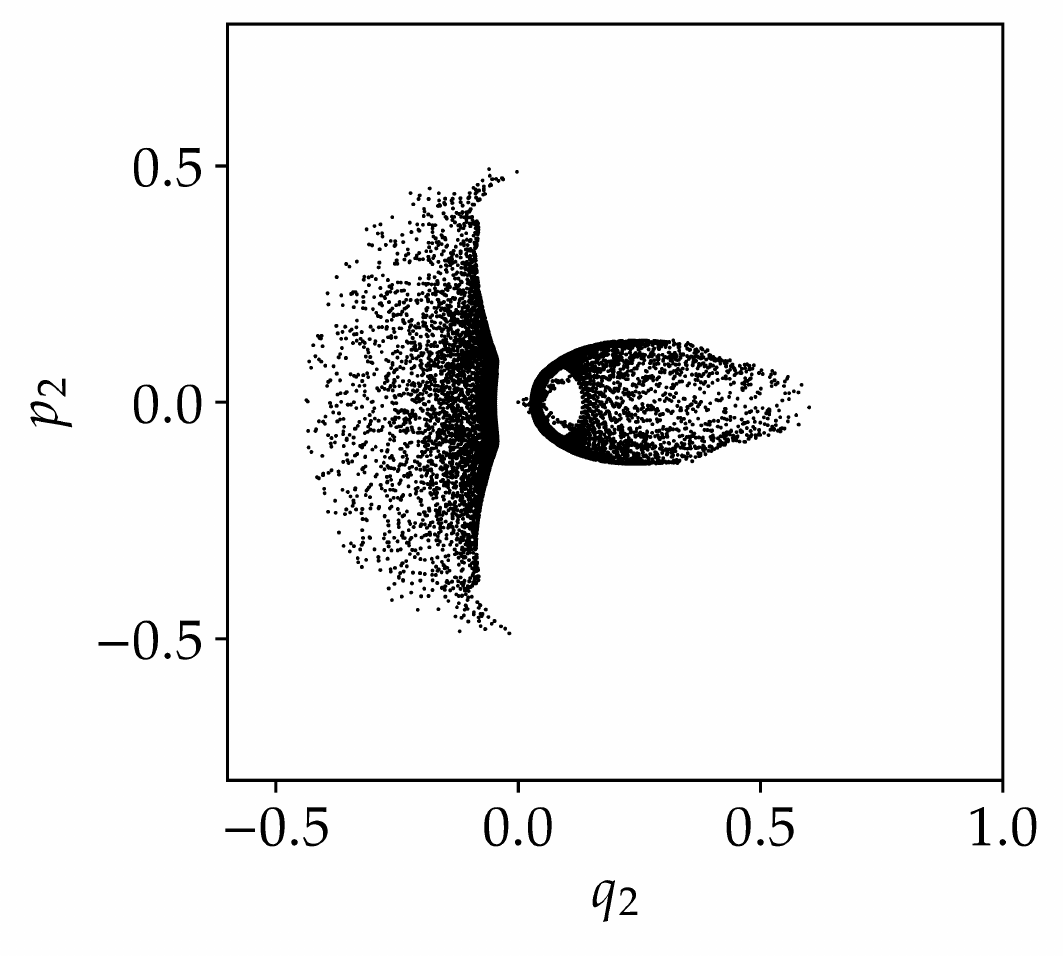}
    \caption{Baseline scheme.}
  \end{subfigure}%
  ~
  \begin{subfigure}[b]{0.49\textwidth}
    \centering
    \includegraphics[width=\textwidth]{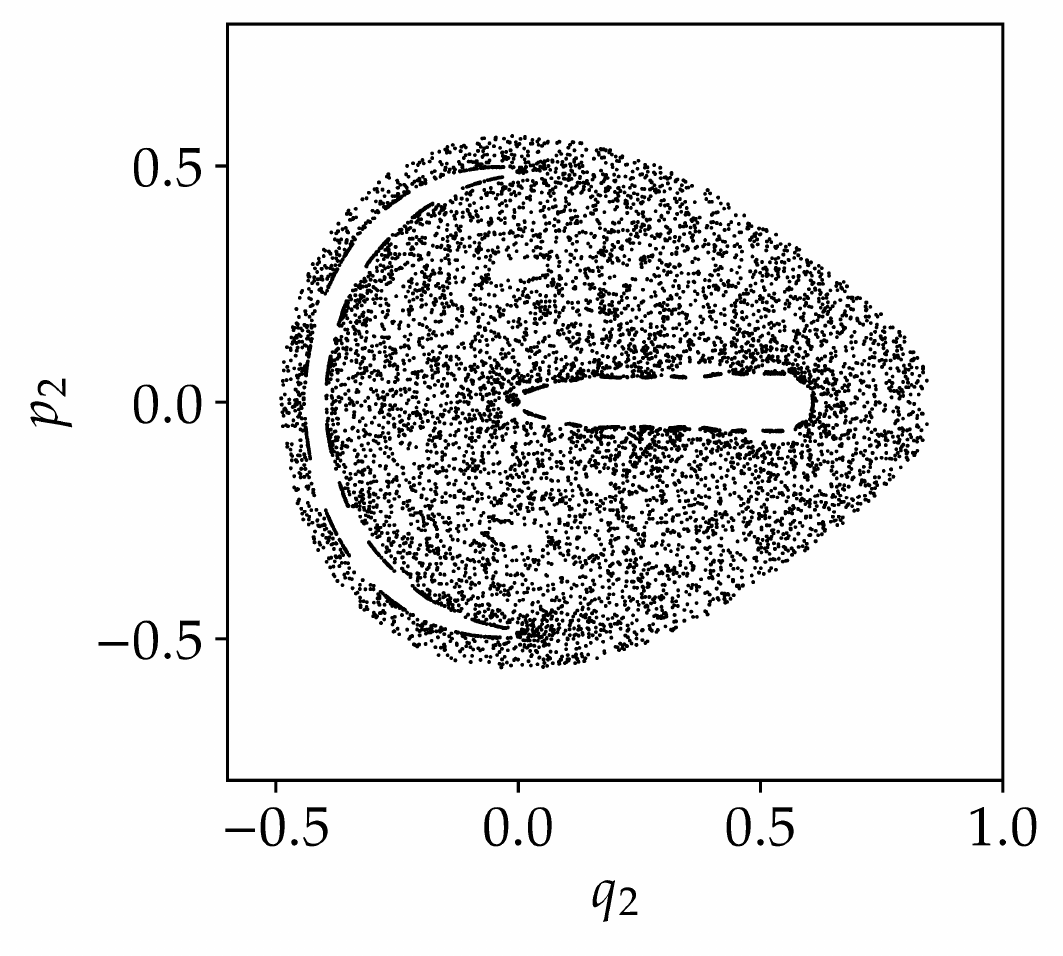}
    \caption{Relaxation scheme.}
  \end{subfigure}%
  \caption{Poincaré sections of numerical solutions of the Hénon-Heiles
           system \eqref{eq:Henon-Heiles} with initial condition
           \eqref{eq:Henon-Heiles-chaotic} using the third order method
           SSPRK(3,3) of \cite{shu1988efficient} with and without relaxation
           and a time step $\dt = 0.1$.}
  \label{fig:Henon-Heiles-chaotic}
\end{figure}

\subsection{Duffing Oscillator}
\label{sec:qualitative-Duffing}

The undamped Duffing oscillator
\begin{equation}
\label{eq:Duffing}
  \od[2]{}{t} q(t) = q(t) - q(t)^3
\end{equation}
can be written as a system of first order ODEs using the momentum
$p(t) = \od{}{t} q(t)$ and is a Hamiltonian system \eqref{eq:Hamiltonian} with total energy
\begin{equation}
  H(q,p) = \frac{1}{2} p^2 - \frac{1}{2} q^2 + \frac{1}{4} q^4.
\end{equation}
The initial condition $(q,p) = (1.4142,0)$ is inside, but very near to, the
separatrix (which crosses $(q,p) = (\sqrt{2}, 0)$) dividing the parts of
phase space that give bounded or unbounded solutions \cite{cano2001comparison}.
The solution should stay on a closed curve near the separatrix in the
half space $q \geq 0$.

Typically, numerical solutions obtained by explicit Runge-Kutta methods
i) lose energy and spiral inwards or ii) cross the separatrix and escape
to the left half plane, as can be seen in Figure~\ref{fig:Duffing}. Of
course, both behaviors are undesired. The relaxation methods
give the correct qualitative behavior.

\begin{figure}[t]
\centering
  \begin{subfigure}[b]{0.4\textwidth}
    \centering
    \includegraphics[width=\textwidth]{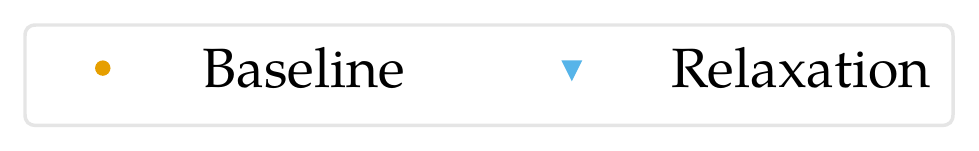}
  \end{subfigure}%
  \\
  \begin{subfigure}[b]{0.49\textwidth}
    \centering
    \includegraphics[width=\textwidth]{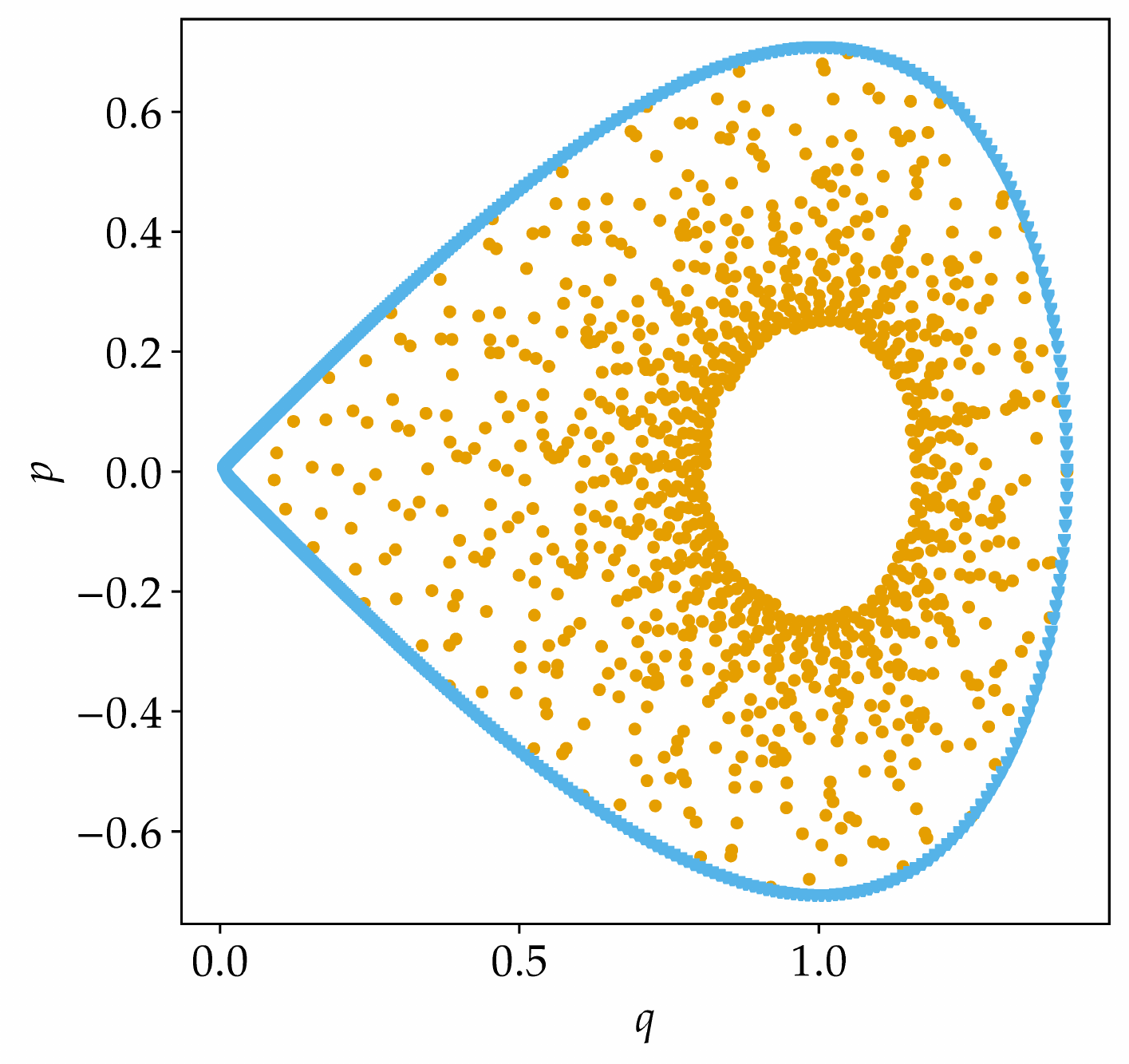}
    \caption{Fourth order method of Kutta~\cite{kutta1901beitrag}, with $\dt=0.5$
    and final time $t=500$.}
  \end{subfigure}%
  ~
  \begin{subfigure}[b]{0.49\textwidth}
    \centering
    \includegraphics[width=\textwidth]{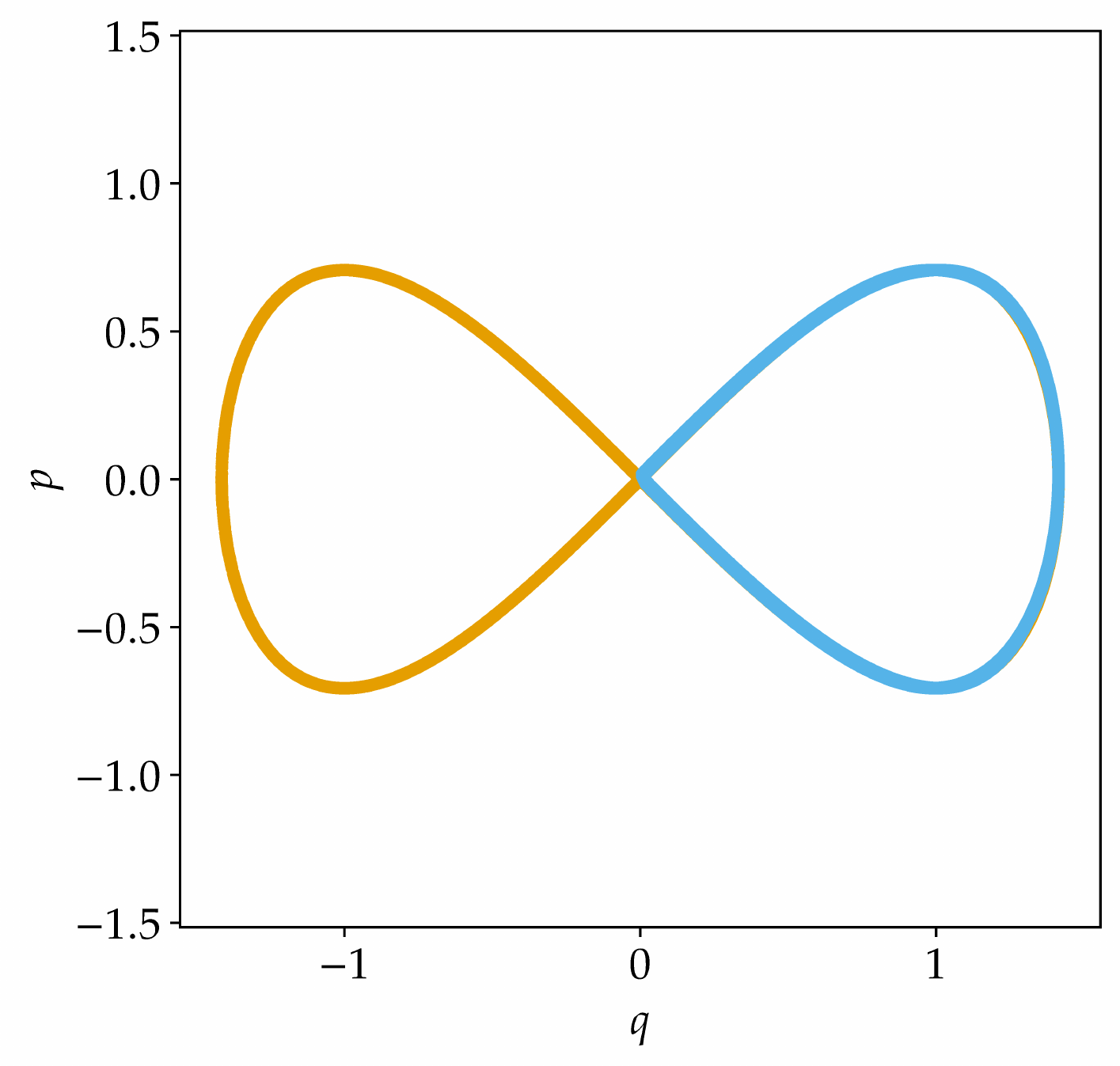}
    \caption{Fifth order method of
             Bogacki \& Shampine~\cite{bogacki1996efficient}, with $\dt=0.25$
             and final time $t=5000$.}
  \end{subfigure}%
  \caption{Simulation of the undamped Duffing oscillator \eqref{eq:Duffing}
           using explicit Runge-Kutta methods with and without relaxation
           to preserve the Hamiltonian.}
  \label{fig:Duffing}
\end{figure}

\subsection{Phase Space Volume}
\label{sec:qualitative-volume}

Symplectic methods are made to preserve the structure of a Hamiltonian
system.  While they do not conserve the Hamiltonian exactly in
general, a modified Hamiltonian is conserved.
Here we investigate whether relaxation Runge-Kutta
methods conserving the Hamiltonian show better conservation of phase
space volume relative to standard Runge-Kutta methods.
While volume preservation is in general not equivalent to symplecticity or
approximate energy conservation \cite{hairer2009energy,hairer2018energy},
the change of phase space volume can be measured approximately for some
models.

At first, the harmonic oscillator
\begin{equation}
\label{eq:harmonic-osc}
  u'(t) = L u(t),
  \quad
  u(0) = \begin{pmatrix} 1 \\ 0 \end{pmatrix},
  \quad
  L =
      \begin{pmatrix}
        0 & -1 \\
        1 & 0 \\
      \end{pmatrix},
\end{equation}
is considered. To measure the change of phase space volume, \num{200}
initial conditions are uniformly randomly chosen in a circle of radius
$0.001$ around $(1, 0)$.
These are evolved in time using the symplectic Euler method as well as
the unmodified and relaxation version of the classical fourth order
Runge-Kutta method with a time step $\dt = 0.25$.
The phase space volume occupied by these points is measured by computing
the volume of the convex hull of these points using the Qhull library
\cite{barber1996quickhull} via SciPy \cite{virtanen2019scipy}. It has been
verified by inspection that this approach is reasonable for the test cases
under consideration.
We choose a symplectic method as reference, since these are constructed to
conserve the phase space volume exactly. Additionally, they conserve quadratic
Hamiltonians such as the energy of the harmonic oscillator. General Hamiltonians
are not conserved exactly by symplectic methods; instead, a modified Hamiltonian is conserved.

The results are shown in Figure~\ref{fig:phase-space-volume-linear-osc}.
Both the symplectic Euler method and the relaxation RK(4,4) method conserve
the phase space volume while the baseline RK(4,4) scheme results in a clear
loss of phase space volume. A linear fit of the relative change of phase
space volume yields the slope \num{1.15e-13} for the relaxation scheme and
\num{1.07e-15} for the symplectic Euler method.

Additionally, two uncoupled harmonic oscillators are considered as a higher
dimensional problem with initial conditions distributed randomly in a cube
of side length $0.001$ around $(1, 0, 0.5, 0.5)$.
Although both oscillators are uncoupled, total energy conservation via
the relaxation approach introduces some coupling.
For this case, the slopes of the linear fits of the relative change of
phase space volume are \num{2.13e-13} for the relaxation RK(4,4) method
and \num{-1.72e-14} for the symplectic Euler method.

\begin{figure}[htb]
\centering
  \begin{subfigure}[b]{0.49\textwidth}
    \centering
    \includegraphics[width=\textwidth]{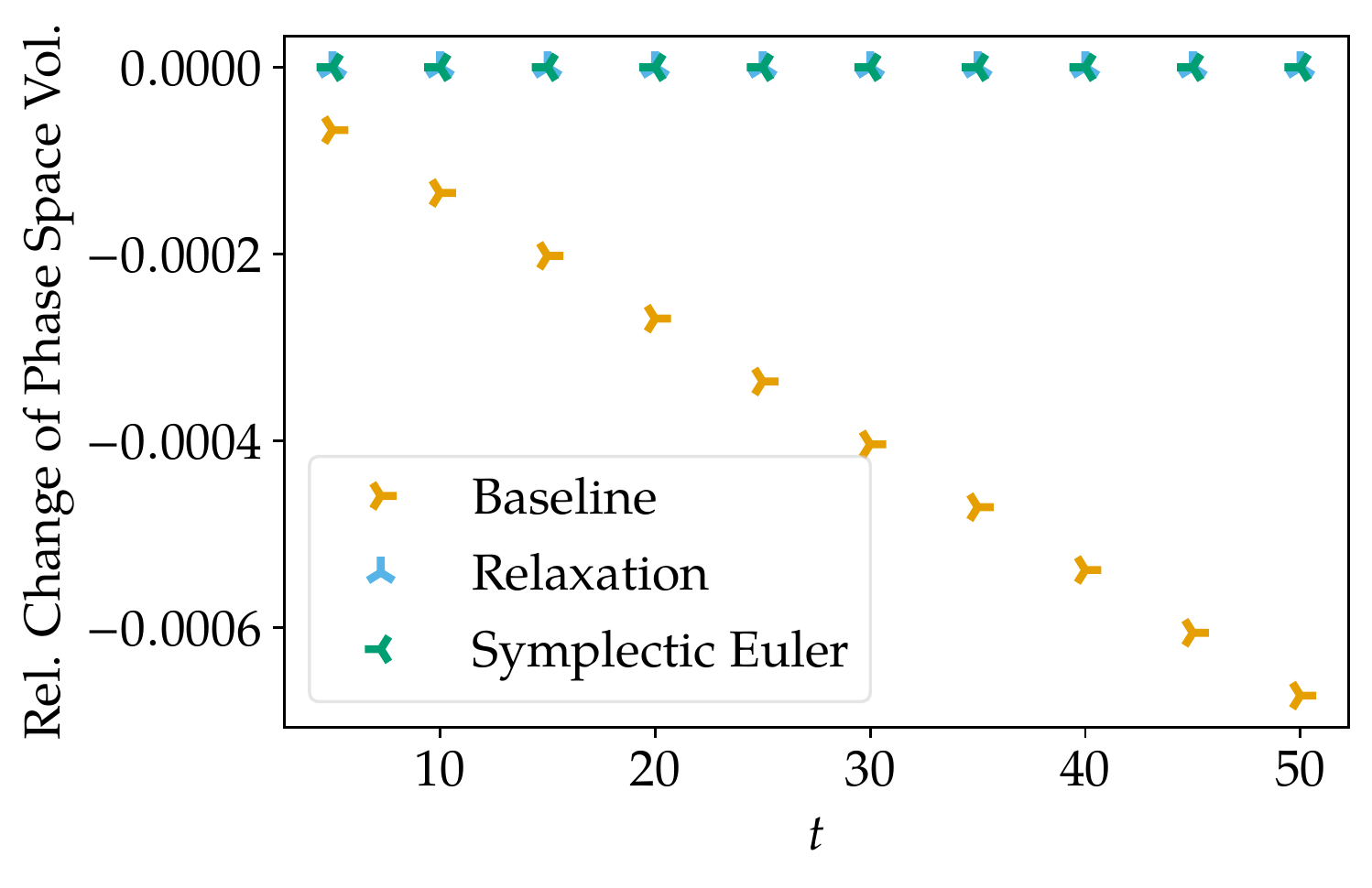}
    \caption{One harmonic oscillator.}
  \end{subfigure}%
  \begin{subfigure}[b]{0.49\textwidth}
    \centering
    \includegraphics[width=\textwidth]{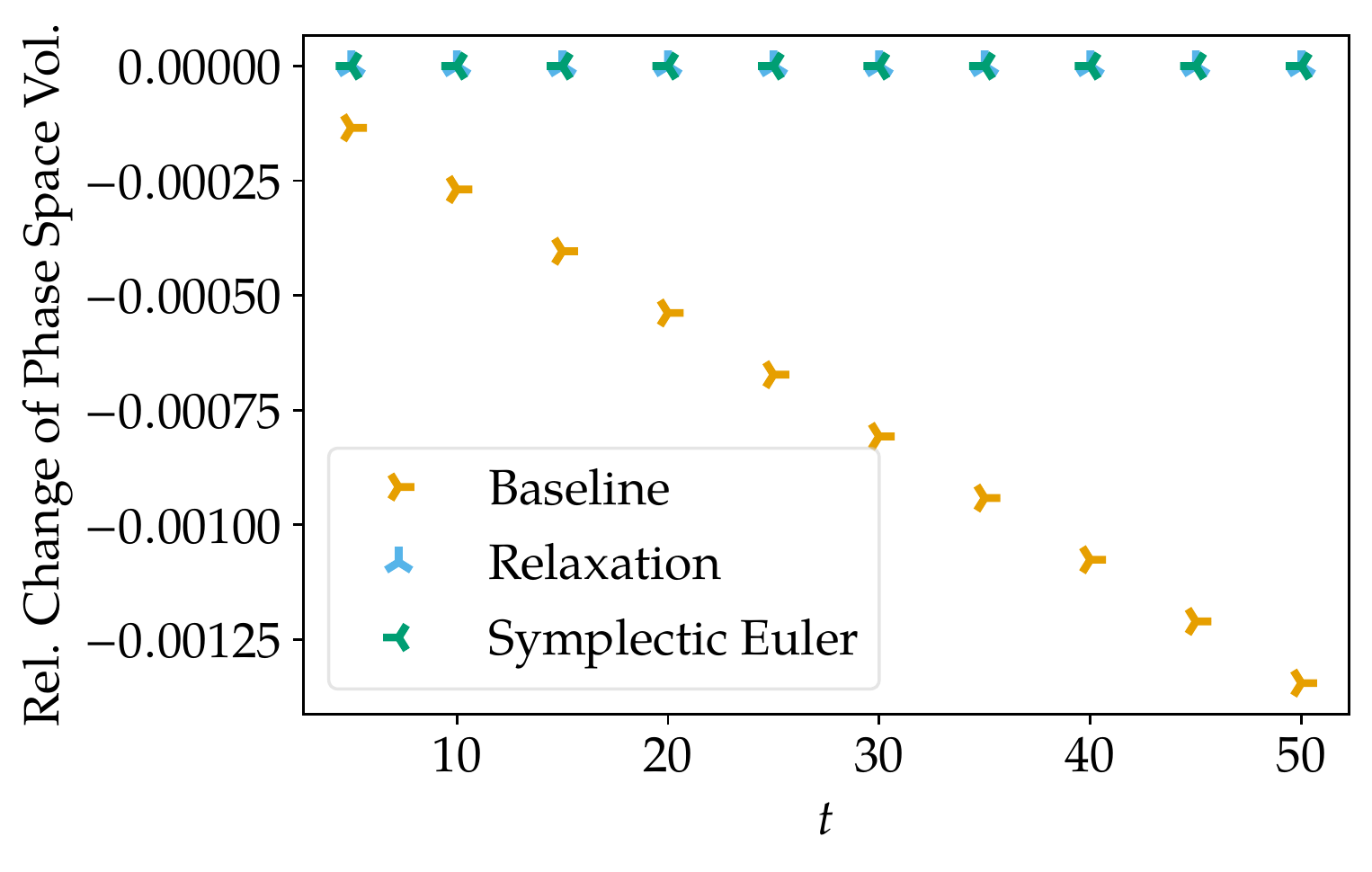}
    \caption{Two harmonic oscillators.}
  \end{subfigure}%
  \caption{Relative change of phase space volume for the harmonic
           oscillator \eqref{eq:harmonic-osc} using the symplectic Euler
           method as well as the classical fourth order Runge-Kutta
           method RK(4,4) with and without relaxation to preserve the
           Hamiltonian/energy.}
  \label{fig:phase-space-volume-linear-osc}
\end{figure}

Additionally, the undamped Duffing oscillator \eqref{eq:Duffing}
is considered as a nonlinear example. The \num{200} initial conditions
are uniformly randomly chosen in a circle of radius $0.001$ around $(1, 0)$.
The relaxation version of RK(4,4) and SSPRK(3,3) yield visually the same
relative change of phas space volume as the symplectic Euler method, while
the corresponding baseline schemes result in a clear change of phase space
volume, as can be seen in Figure~\ref{fig:phase-space-volume-duffing-osc}.
While the appropriateness of the approach to measure the phase space volume
has still been verified by inspection, it becomes a bit less exact. In
particular, linear fits of the relative change of phase space volume for both
relaxation and symplectic methods yield a slope of approximately \num{1e-8}
instead of machine accuracy. For more complicated problems, this approach to
measure the phase space volume does not seem to be appropriate.

\begin{figure}
\centering
  \begin{subfigure}[b]{0.49\textwidth}
    \centering
    \includegraphics[width=\textwidth]{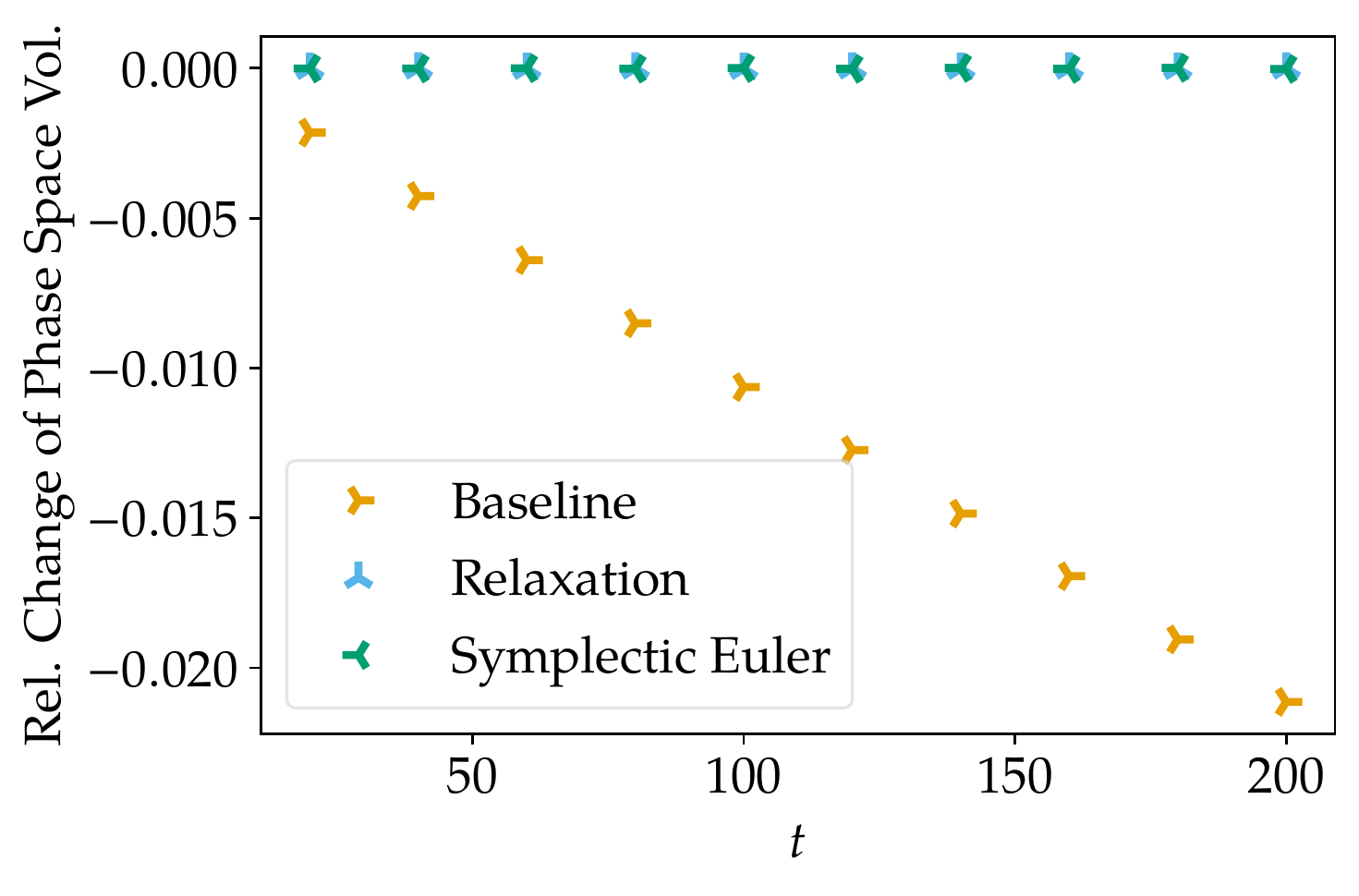}
    \caption{RK(4,4).}
  \end{subfigure}%
  \begin{subfigure}[b]{0.49\textwidth}
    \centering
    \includegraphics[width=\textwidth]{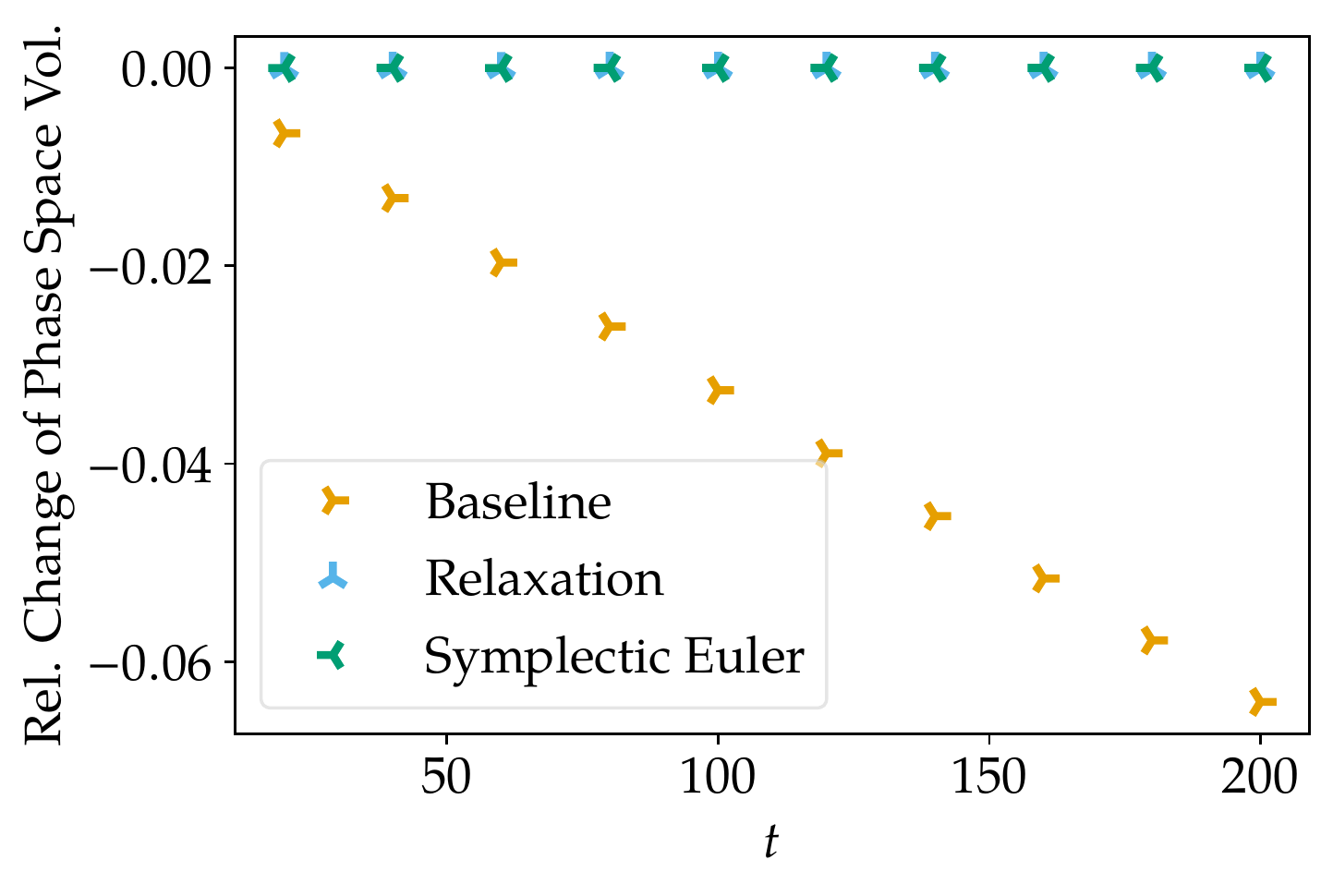}
    \caption{SSPRK(3,3).}
  \end{subfigure}%
  \caption{Relative change of phase space volume for the Duffing
           oscillator \eqref{eq:Duffing} using the symplectic Euler
           method as well as RK(4,4) and SSPRK(3,3) with and without
           relaxation to preserve the Hamiltonian.}
  \label{fig:phase-space-volume-duffing-osc}
\end{figure}

\section{Accuracy}
\label{sec:accuracy}

In this section we study the impact of relaxation on accuracy for Hamiltonian
problems. It has been observed (and proven in an asymptotic sense) that
relaxation does not reduce the order of accuracy of a RK method.
Here we show that it can in fact improve the accuracy for Hamiltonian
problems. In Section \ref{sec:error-growth} we demonstrate that RRK methods
reduce the error growth in time for Hamiltonian problems.
In Section \ref{sec:superconvergence}
we show that relaxation increases the convergence rate by one for odd
order methods applied to a certain class of Hamiltonian problems.

\subsection{Error Growth for Oscillatory Problems}
\label{sec:error-growth}

Structure preserving methods, especially symplectic or energy conserving
ones, result in a good behavior of the numerical error for periodic
(Hamiltonian) problems, cf.
\cite{calvo1993development,portillo1995lack,cano1997error,calvo2011error}.
For periodic problems where the period is conserved by a relaxation
approach, e.g. if the period depends only on the Hamiltonian and the
RRK method conserves the Hamiltonian, the asymptotic error growth
is linear, even if adaptive time steps are used. This can be proved
by applying Theorem~5.1 of \cite{calvo2011error}, noticing that the
direction
\begin{equation}
  d = \sum_{i=1}^s b_i f_i
\end{equation}
remains bounded away from zero for $\dt \to 0$, unless the system is stationary.
In contrast, methods that do not preserve some structure of the
system (either exactly or to some appropriate order of accuracy) will in
general result in an asymptotic error growth that is linear at first
and quadratic after some time, usually after the first period, cf.
\cite{calvo1993development,portillo1995lack,cano1997error,calvo2011error}.

\subsubsection{Nonlinear Oscillator}
\label{sec:error-growth-nonlinear-osc}

For the nonlinear oscillator
\begin{equation}
\label{eq:nonlinear-osc}
  \od{}{t} \begin{pmatrix} u_1(t) \\ u_2(t) \end{pmatrix}
  =
  \norm{u(t)}^{-2} \begin{pmatrix} -u_2(t) \\ u_1(t) \end{pmatrix},
  \quad
  u^0
  =
  \begin{pmatrix} 1 \\ 0 \end{pmatrix},
\end{equation}
of \cite{ranocha2020strong,ranocha2019energy},
the period is a function of the energy
of the solution. Hence, the theory predicts that the error of the baseline
Runge-Kutta schemes grows linearly at first and quadratically afterwards
while relaxation methods should have a linear error growth in time.
The numerical results shown in Figure~\ref{fig:error-growth-nonlinear-osc}
confirm this theory for the third order method of Heun~\cite{heun1900neue},
the fourth order method of Fehlberg~\cite[Table~III]{fehlberg1969low},
and the fifth order method of Bogacki \& Shampine~\cite{bogacki1996efficient}.

\begin{figure}
\centering
  \begin{subfigure}{0.32\textwidth}
    \centering
    \includegraphics[width=\textwidth]{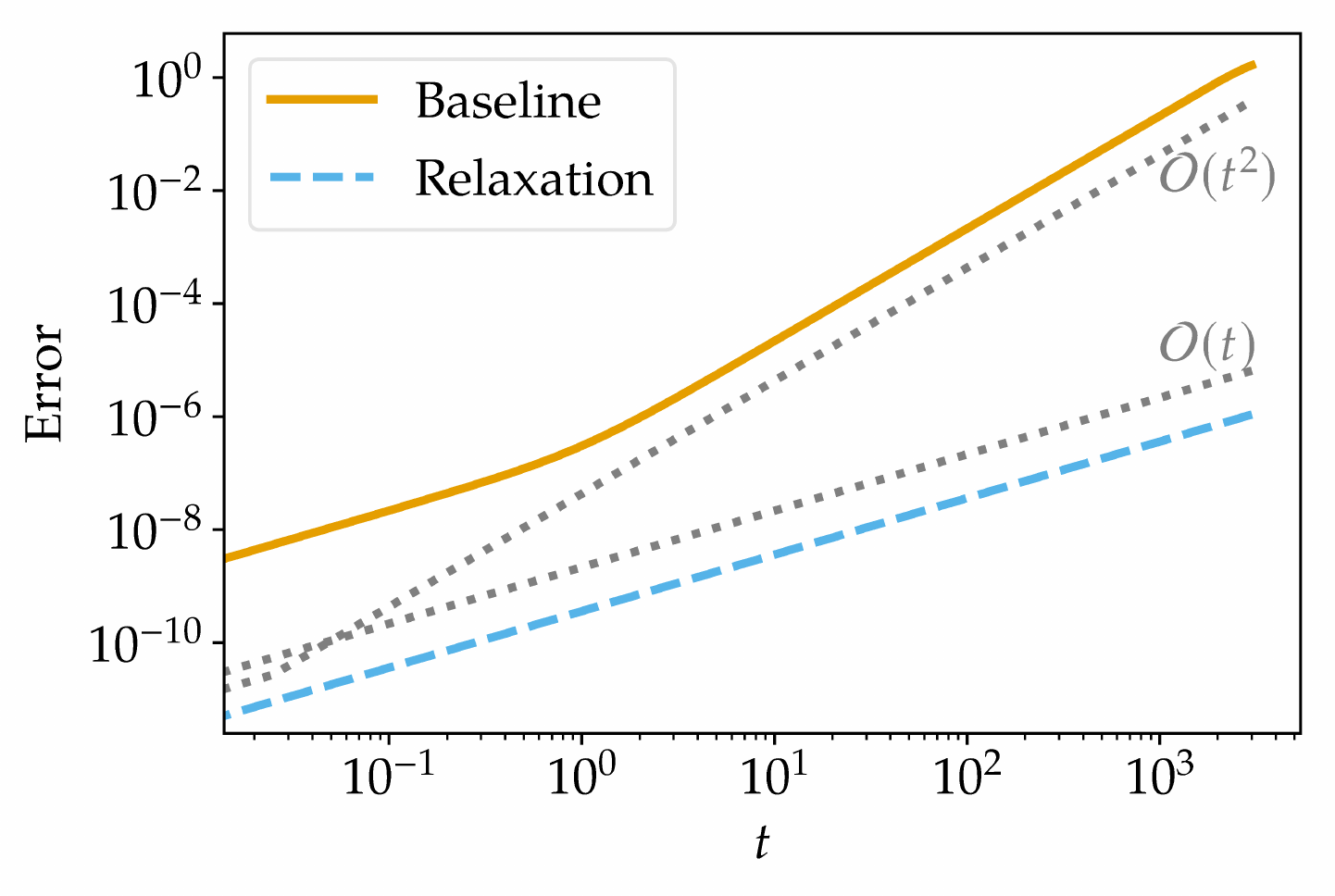}
    \caption{Third order method of Heun~\cite{heun1900neue},
             $\dt = 0.025$.}
  \end{subfigure}%
  ~
  \begin{subfigure}{0.32\textwidth}
    \centering
    \includegraphics[width=\textwidth]{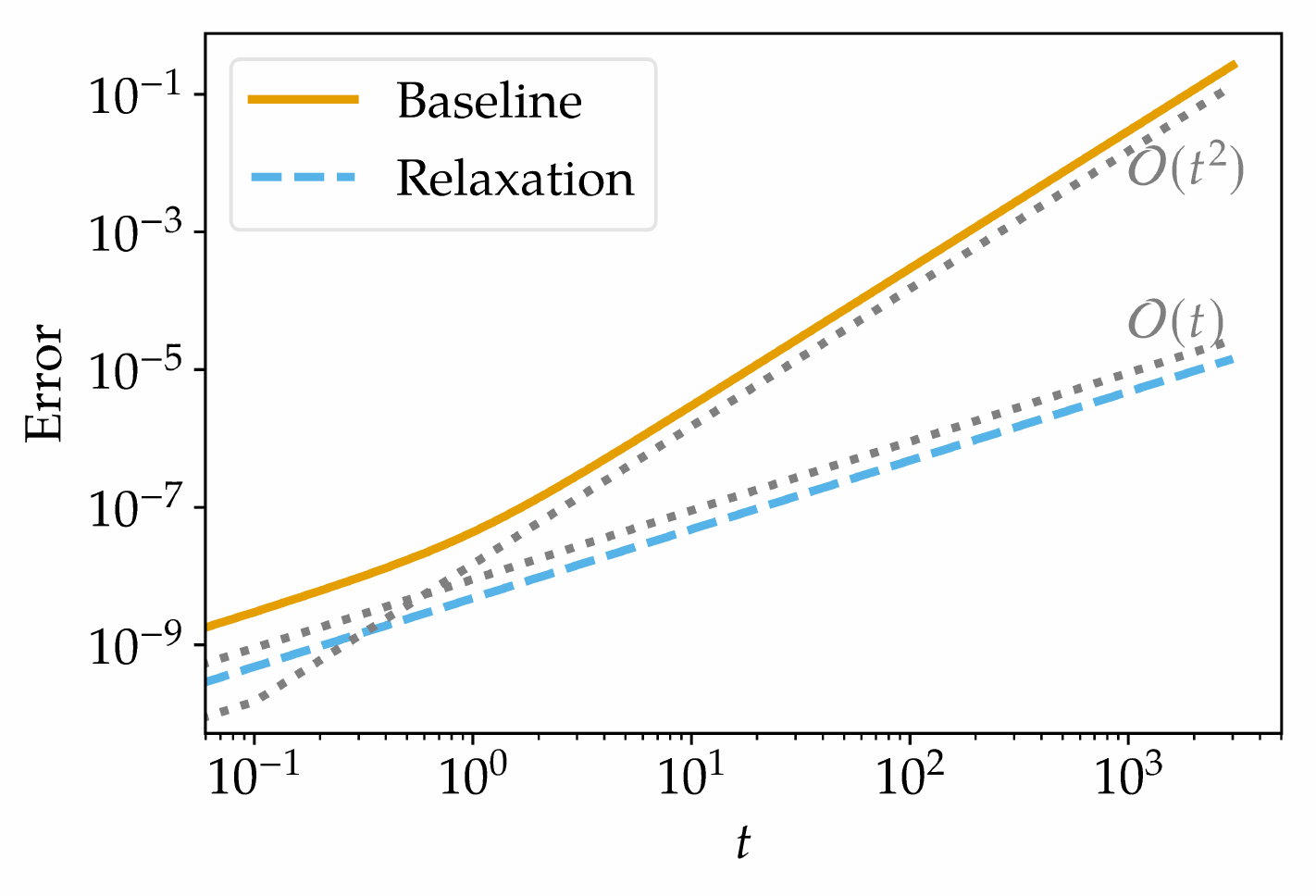}
    \caption{Fourth order method of
             Fehlberg~\cite[Table~III]{fehlberg1969low},
             $\dt = 0.1$.}
  \end{subfigure}%
  ~
  \begin{subfigure}{0.32\textwidth}
    \centering
    \includegraphics[width=\textwidth]{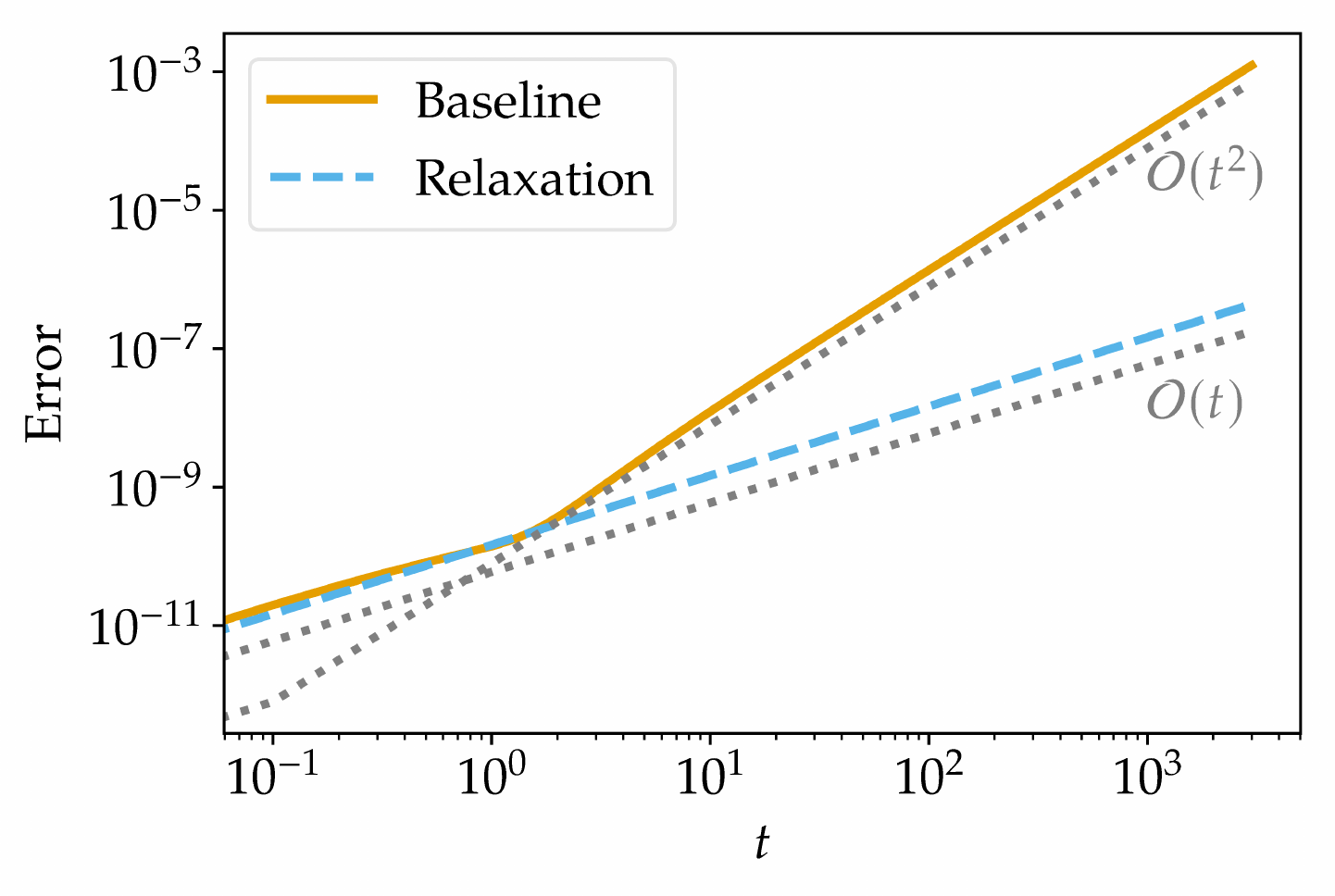}
    \caption{Fifth order method of
             Bogacki \& Shampine~\cite{bogacki1996efficient},
             $\dt = 0.1$.}
  \end{subfigure}%
  \caption{Error growth in time for the nonlinear oscillator
            \eqref{eq:nonlinear-osc} using different baseline and
            relaxation Runge-Kutta methods.}
  \label{fig:error-growth-nonlinear-osc}
\end{figure}

\subsubsection{Kepler Problem}
\label{sec:error-growth-Kepler}

The Kepler problem
\begin{equation}
\label{eq:Kepler}
\begin{gathered}
  \od{}{t} q(t)
  = \od{}{t} \begin{pmatrix} q_1(t) \\ q_2(t) \end{pmatrix}
  = p(t),
  \quad
  \od{}{t} p_i(t) = - \frac{q_i(t)}{ \abs{q(t)}^3 },
  \\
  q(0) = \begin{pmatrix} 1-e \\ 0 \end{pmatrix},
  \quad
  p(0) = \begin{pmatrix} 0 \\ \sqrt{ (1-e) / (1+e) } \end{pmatrix},
\end{gathered}
\end{equation}
with eccentricity $e = 0.5$ is a Hamiltonian system \eqref{eq:Hamiltonian} with Hamiltonian
\begin{equation}
  H(q,p) = \frac{1}{2} \abs{p}^2 - \frac{1}{\abs{q}},
\end{equation}
where the angular momentum
\begin{equation}
  L(q,p) = q_1 p_2 - q_2 p_1
\end{equation}
is an additional conserved quantity, cf.
\cite[Section~1.2.4]{sanzserna1994numerical,calvo1993development}.
The period depends only on the Hamiltonian, which can be conserved by
relaxation Runge-Kutta methods. Another possibility is to employ the
relaxation approach to conserve the angular momentum.

Results of baseline schemes, RRK methods conserving the Hamiltonian,
and RRK methods conserving the angular momentum are shown in
Figure~\ref{fig:error-growth-Kepler}. As expected, the asymptotic
error growth is quadratic whenever the Hamiltonian is not conserved
while the energy conserving RRK methods result in a linear error growth
in time, similarly to symplectic schemes.
\begin{figure}
\centering
  \begin{subfigure}{0.32\textwidth}
    \centering
    \includegraphics[width=\textwidth]{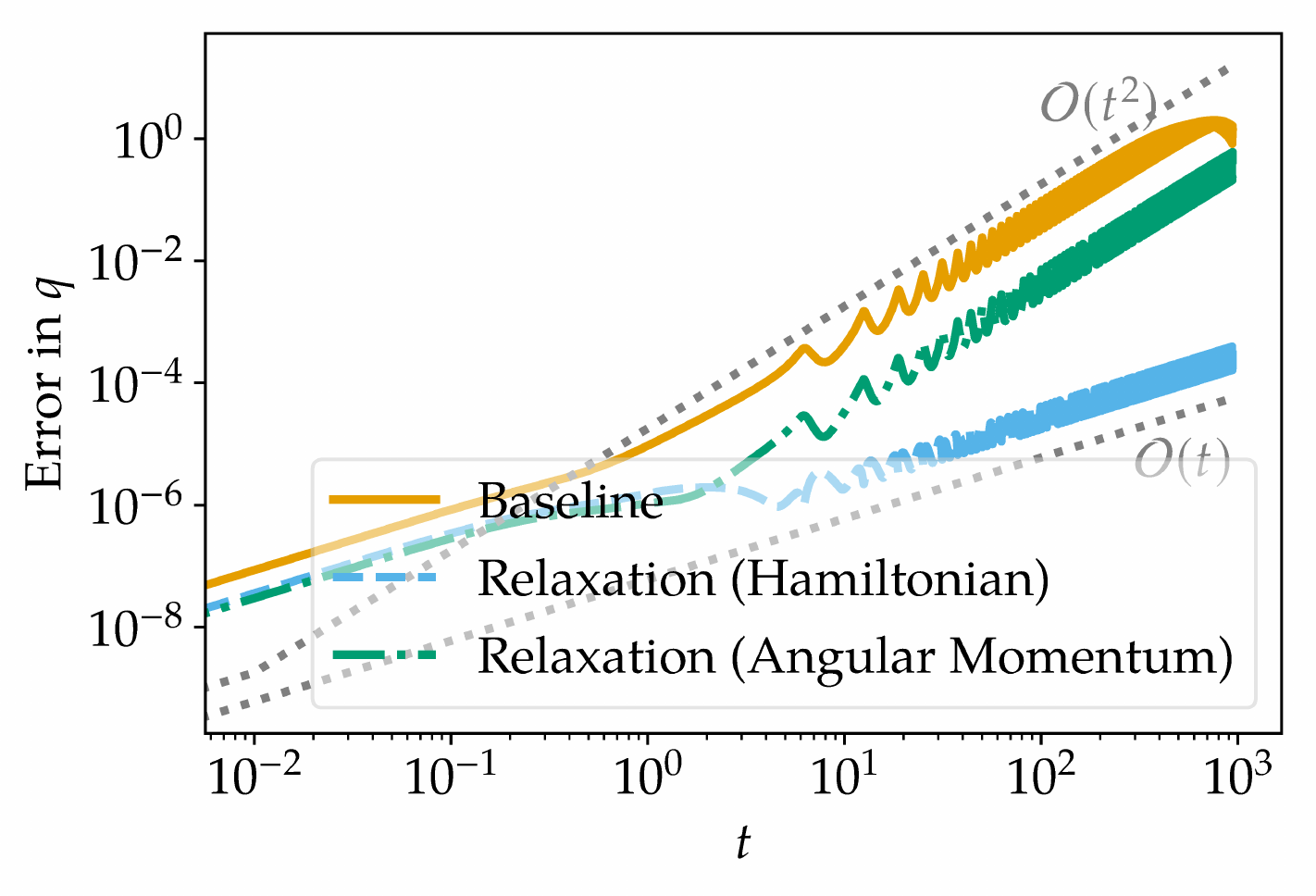}
    \caption{Third order method of Shu \& Osher~\cite{shu1988efficient},
              $\dt = 0.001$.}
  \end{subfigure}%
  ~
  \begin{subfigure}{0.32\textwidth}
    \centering
    \includegraphics[width=\textwidth]{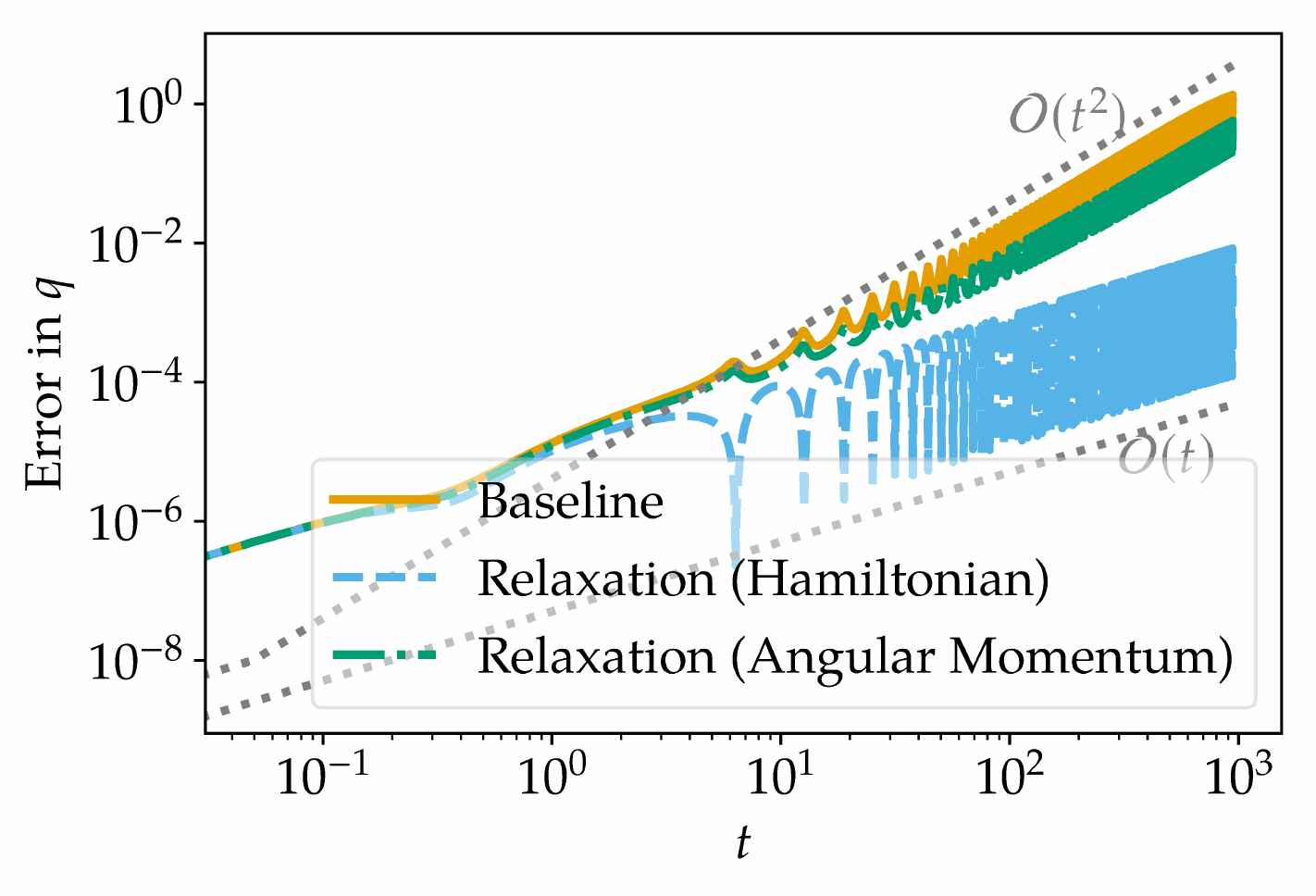}
    \caption{Fourth order method of Kutta~\cite{kutta1901beitrag},
              $\dt = 0.05$.}
  \end{subfigure}%
  ~
  \begin{subfigure}{0.32\textwidth}
    \centering
    \includegraphics[width=\textwidth]{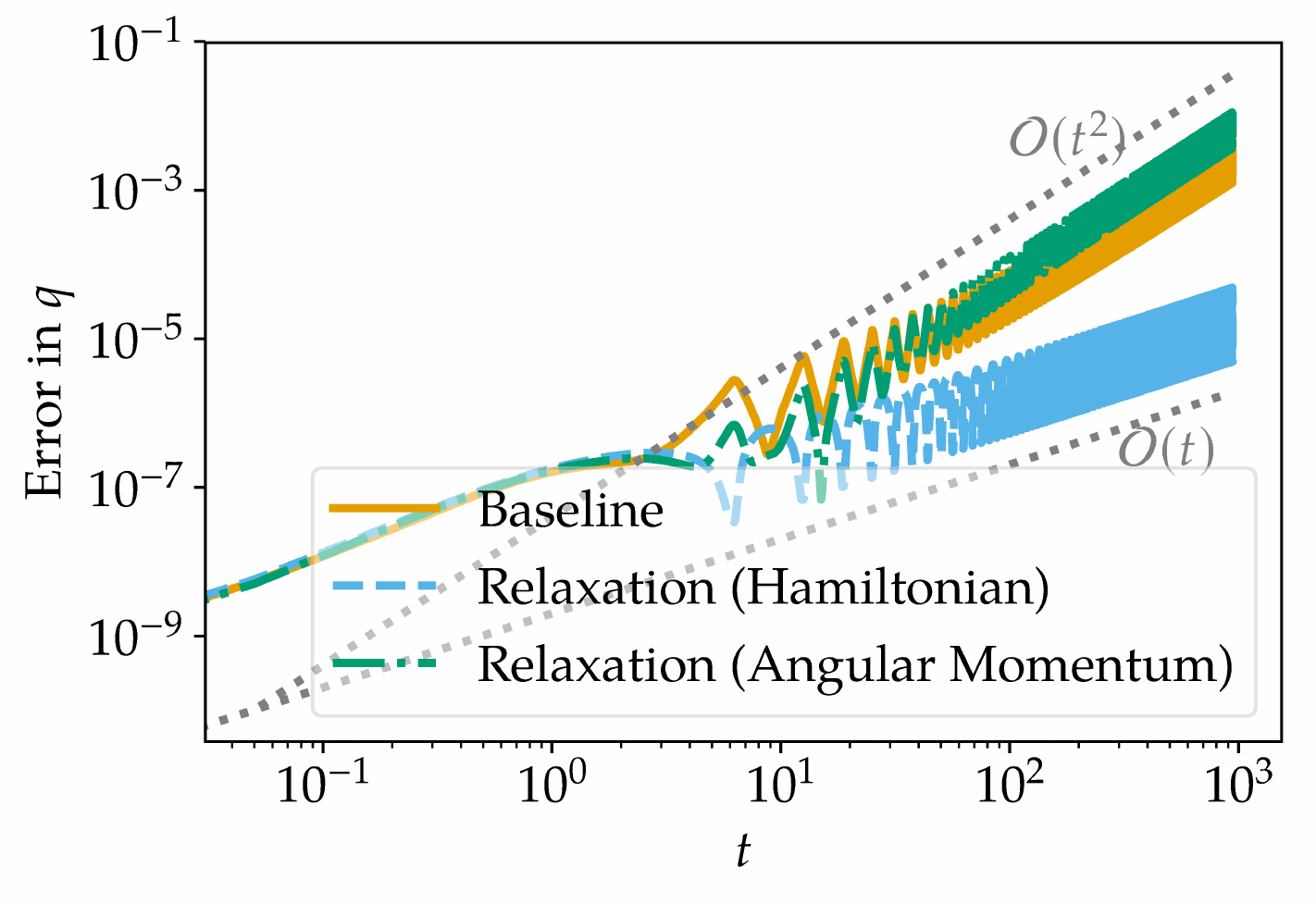}
    \caption{Fifth order method of Prince \& Dormand~\cite{prince1981high},
              $\dt = 0.05$.}
  \end{subfigure}%
  \caption{Error growth in time for the Kepler problem
            \eqref{eq:Kepler} using different baseline and
            relaxation Runge-Kutta methods.}
  \label{fig:error-growth-Kepler}
\end{figure}

\subsubsection{Korteweg-de Vries Equation}
\label{sec:KdV}

The Korteweg-de Vries (KdV) equation
\begin{equation}
\label{eq:KdV}
  \partial_t u + \partial_x \frac{u^2}{2} + \partial_x^3 u = 0
\end{equation}
is well-known in the literature as a nonlinear PDE which admits soliton
solutions of the form
\begin{equation}
  u(t,x) = A \cosh\bigl( \sqrt{3 A} (x - c t - \mu) / 6 \bigr)^{-2},
  \quad
  c = \nicefrac{A}{3},
\end{equation}
where $A$ is the amplitude, $c$ the wave speed, and $\mu$ an
arbitrary constant. The KdV equation possesses an infinite hierarchy
of conserved quantities, including the mass $\int u$ and the energy
$\frac{1}{2} \norm{u}^2 = \frac{1}{2} \int u^2$.

Using periodic boundary conditions, mass- and energy-conservative
semidiscretizations using central finite difference or Fourier collocation
schemes can be constructed using a split form as
\begin{equation}
\label{eq:KdV-semidiscrete}
  \partial_t u + \frac{1}{3} \bigl( D_1 (u \cdot u) + u \cdot (D_1 u) \bigr) + D_3 u = 0,
\end{equation}
where $D_k$ is the discrete derivative operator approximating the $k$th
derivative, i.e. $D_k u \approx \partial_x^k u$. Here $u$ is the vector of
solution values at the spatial grid points, and $\cdot$ denotes elementwise
multiplication.
The semidiscretization \eqref{eq:KdV-semidiscrete} is obtained by averaging
the conservative form $\partial_x \nicefrac{u^2}{2}$ of the nonlinear term and
the equivalent form $u \partial_x u$. Such split forms are often necessary to
obtain conservative or dissipative semidiscretizations of nonlinear PDEs since
the product and chain rules do not hold discretely \cite{ranocha2019mimetic}.
Although the idea to use split forms is not new
\cite[eq. (6.40)]{richtmyer1967difference}, it is still state of the art
\cite{gassner2016split,ranocha2018thesis}.

In the context of structure preserving schemes, especially energy
conservative schemes, the KdV equation has been studied in
\cite{sanz1982explicit,sanz1983method,frutos1997accuracy}.
In particular, the error growth in time for soliton solutions has been
studied in \cite{frutos1997accuracy}, where it is proved that mass- and energy-conservative
discretizations result in an asymptotically linear error growth while
other discretizations yield a quadratically growing error in time.

To study the behavior of relaxation Runge-Kutta methods for the KdV
equation, the domain $[-20, 60]$ is discretized using a pseudospectral
Fourier collocation method \eqref{eq:KdV-semidiscrete} with $N = 256$
nodes. With this semi-discretization, it is expected
that the temporal error will be dominant unless very small time steps are used.
Since the third derivative in space results in a
stiff semi-discrete problem, implicit Runge-Kutta methods are advantageous.
Here, the two stage, third order SDIRK method of Nørsett \cite{norsett1974semiexplicit}
given in \cite[Table~II.7.2]{hairer2008solving} is used with a time step
$\dt = 0.5$ to advance till the final time $t = 600$.
The stage equations are solved using \texttt{fsolve} from SciPy
\cite{virtanen2019scipy}.
This SDIRK method has
also been used as an example in \cite{frutos1997accuracy} (without relaxation
or orthogonal projection). The initial
condition is given as the projection of the soliton with amplitude
$A = 2$ and offset $\mu = 40$. We also test the behavior of this
discretization when orthogonal projection is applied (instead of relaxation)
in order to preserve energy.

As can be seen in Figure~\ref{fig:KdV-error-and-energy}, the relaxation and
projection methods conserve the total energy up to small errors (due to roundoff
and errors in the solution of the projection equation) while the baseline
SDIRK method results in a decaying energy. The relaxation method, due
to its linear covariance, also preserves the mass, while the projection method
does not (indeed, over the course of the simulation the projection method leads
to a mass increase of almost \SI{10}{\percent}). In accordance with the theory
of \cite{frutos1997accuracy}, the error of the baseline scheme grows
quadratically in time until it saturates at \SI{100}{\percent} error.
In contrast, the energy conservative RRK method
yields a linearly growing error in time.
This results in a much smaller error at the final time, yielding a
numerical solution that is visually indistinguishable from the analytical
one, as shown in Figure~\ref{fig:KdV-solution}.
The projection method also exhibits linear growth for some time,
but at very long times shows quadratic growth due to a noticeable
phase error in Figure~\ref{fig:KdV-solution}. This is again in
accordance with the theory of \cite{frutos1997accuracy}, since the total
mass is not conserved.
Similar results have been obtained for the fourth order SDIRK methods
SDIRK(3,4) of \cite[Table~IV.6.5]{hairer2010solving} and SDIRK(5,4) of
\cite[eq. (6.18)]{hairer2010solving}.

\begin{figure}
\centering
  \begin{subfigure}[t]{0.6\textwidth}
    \centering
    \includegraphics[width=\textwidth]{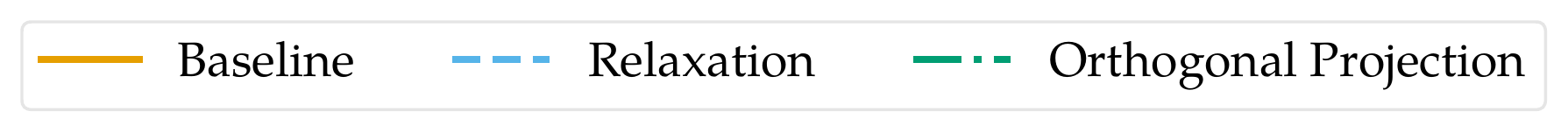}
  \end{subfigure}%
  \\
  \begin{subfigure}[t]{0.32\textwidth}
    \centering
    \includegraphics[width=\textwidth]{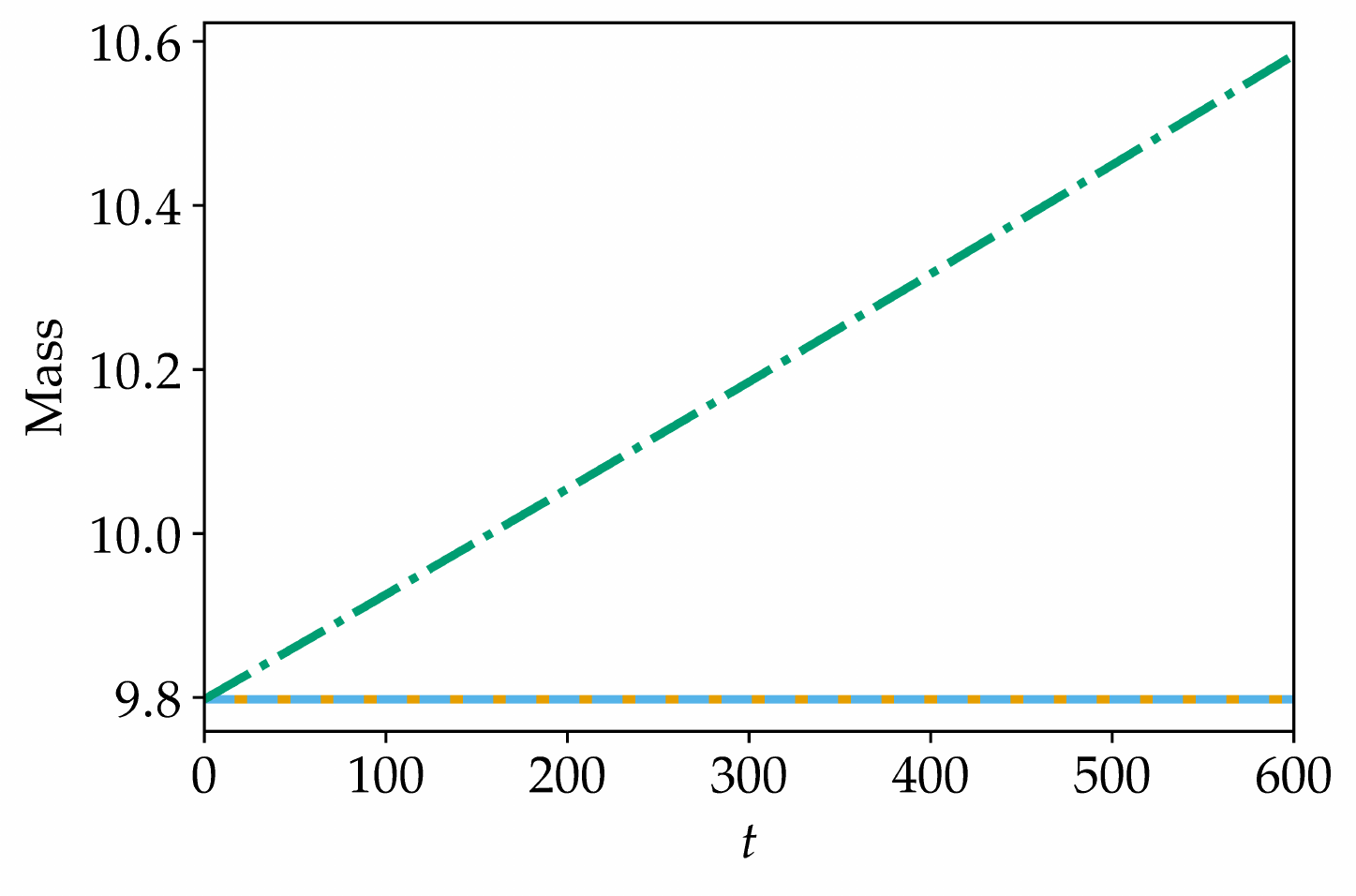}
    \caption{Mass of numerical solutions.}
  \end{subfigure}%
  ~
  \begin{subfigure}[t]{0.32\textwidth}
    \centering
    \includegraphics[width=\textwidth]{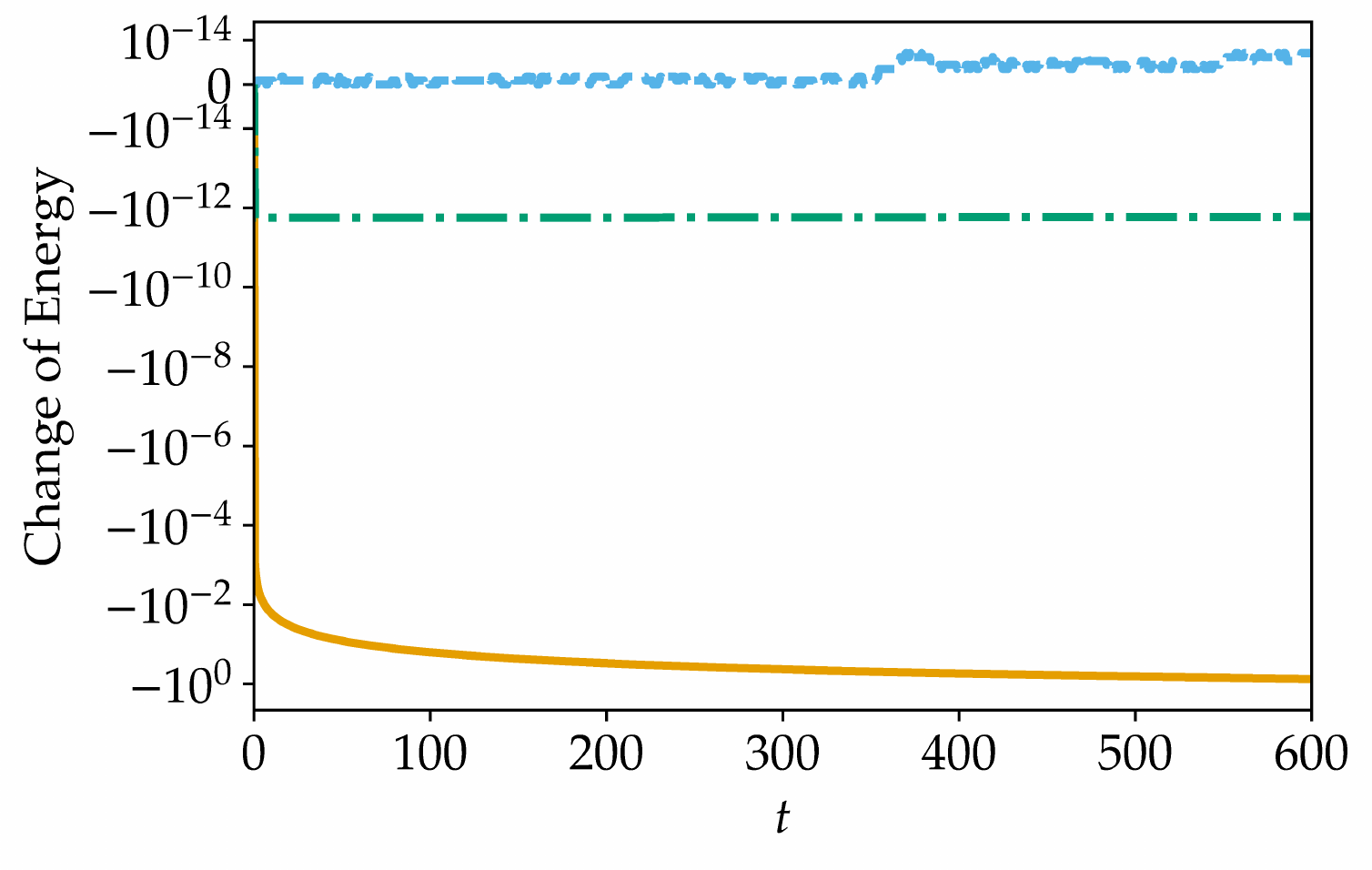}
    \caption{Energy of numerical solutions.}
  \end{subfigure}%
  ~
  \begin{subfigure}[t]{0.32\textwidth}
    \centering
    \includegraphics[width=\textwidth]{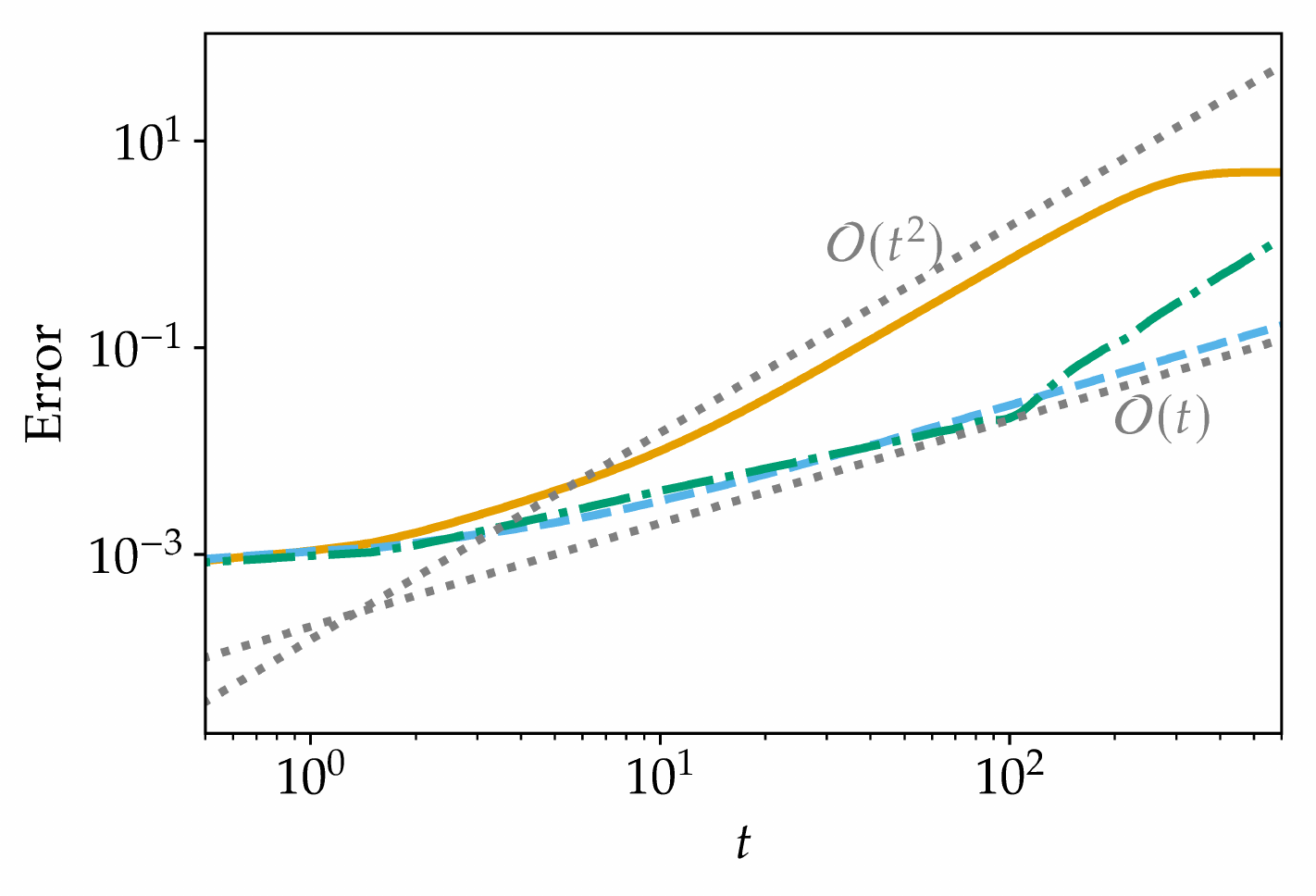}
    \caption{Error of numerical solutions.}
  \end{subfigure}%
  \caption{Total masses \& energies and errors of numerical solutions with
           and without relaxation to preserve the energy for the KdV equation
           \eqref{eq:KdV-semidiscrete}.}
  \label{fig:KdV-error-and-energy}
\end{figure}

\begin{figure}
\centering
  \begin{subfigure}[t]{0.7\textwidth}
    \centering
    \includegraphics[width=\textwidth]{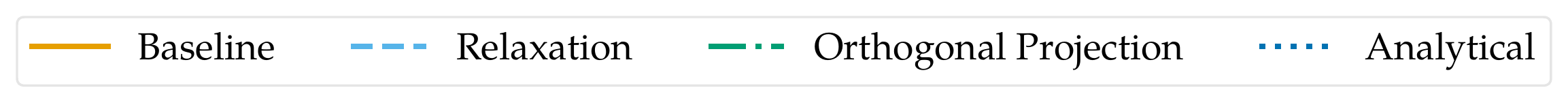}
  \end{subfigure}%
  \\
  \begin{subfigure}[b]{0.49\textwidth}
    \centering
    \includegraphics[width=\textwidth]{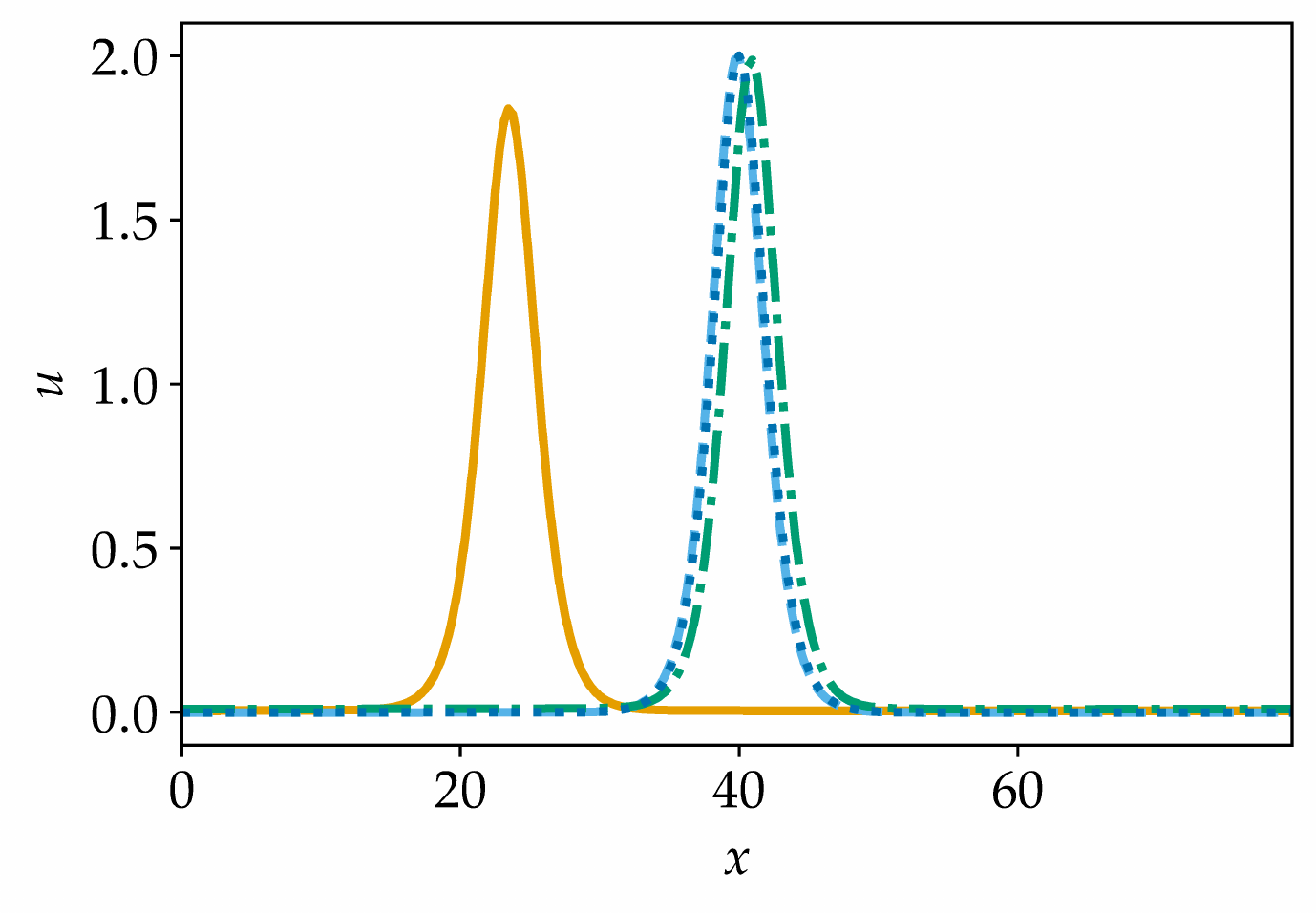}
    \caption{Analytical and numerical solutions at the final time
             with and without relaxation.}
    \label{fig:KdV-solution}
  \end{subfigure}%
  ~
  \begin{subfigure}[b]{0.49\textwidth}
    \centering
    \includegraphics[width=\textwidth]{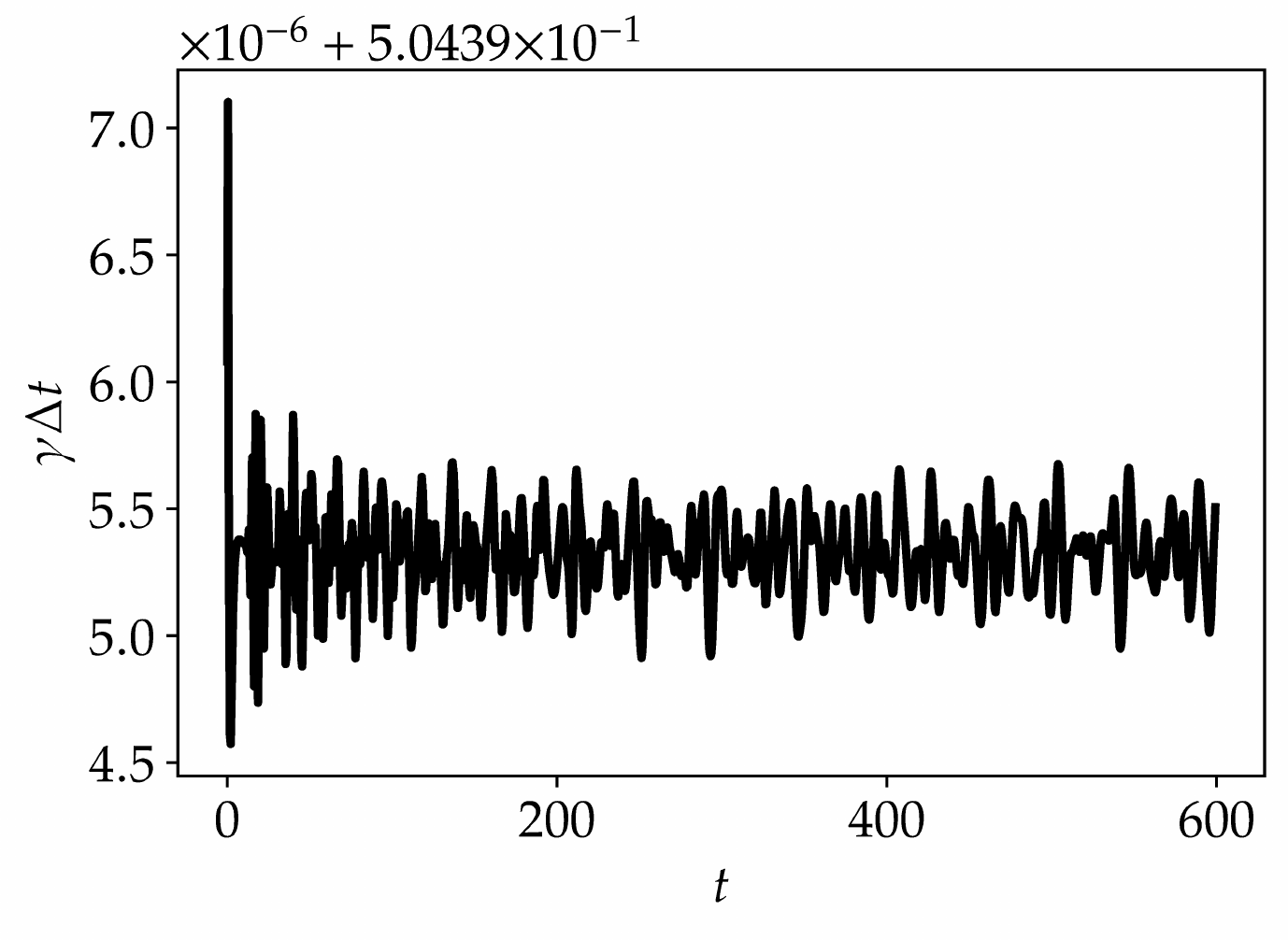}
    \caption{Scaled time step $\gamma \dt$ used by the relaxation method.}
    \label{fig:KdV-gamma-dt}
  \end{subfigure}%
  \caption{Results of numerical experiments with final time $t = 600$
           for the KdV equation \eqref{eq:KdV-semidiscrete}.}
  \label{fig:KdV-solution-and-gamma-dt}
\end{figure}

Based on observations for this problem, relaxation methods are particularly
interesting for implicit Runge-Kutta schemes. Firstly, the cost of solving
the scalar equation for the relaxation parameter $\gamma$ is negligible
compared to the cost of solving the stage equations. Secondly, the relaxation
approach can result in larger scaled time steps $\gamma \dt > \dt$. For the
scheme considered in this example, the baseline method uses time steps
$\dt = 0.5$ while the scaled timesteps $\gamma \dt$ of the relaxation method
were slightly larger with a median $\approx 0.504$, resulting in fewer
time steps needed to reach the final time, cf. Figure \ref{fig:KdV-gamma-dt}.

\subsection{Some Remarks on the Costs of Relaxation}
\label{sec:costs}

We have used SciPy \cite{virtanen2019scipy} to solve the scalar equations for
the relaxation parameter $\gamma$.  The implementations make
use of functions written in pure Python and are not adapted to the
specific problems.  A detailed assessment of the computational cost of
solving for $\gamma$ would require a more refined implementation
and is beyond the scope of this work.
Nevertheless, we give some preliminary discussion of the costs here,
emphasizing that these numbers should be viewed as
very loose upper bounds on the cost of relaxation.

If an explicit time-stepping method is used and the evaluation of the right-hand side is
inexpensive, the costs of naive implementations of the relaxation approach
are significant. For example, solving the Lotka-Volterra system with RK4
as discussed in Section~\ref{sec:qualitative-Lotka-Volterra} takes
\SI{5.14e-02}{s} without relaxation and \SI{1.18e-01}{s} with relaxation.
The phase space plot of the baseline scheme can be made to be visually
indistinguishable from the one of the relaxation method by reducing the
time step by a factor of four, resulting in a CPU time of \SI{1.97e-01}{s}.
Although the relaxation technique increases the runtime significantly in this
naive implemetation, it is still cheaper than decreasing the time step to get
basically the same results.

When implicit time-stepping is used, the cost of relaxation is less significant. For the
KdV equation discussed in Section~\ref{sec:KdV},
the baseline method needs \SI{6.36e+01}{s},
the relaxation method needs \SI{6.26e+01}{s},
and the projection method needs \SI{6.30e+01}{s}.
We see that the energy-conserving methods decrease the runtime despite the added
overhead of computing the relaxation or projection step. This can be explained
by noting that these methods yield improved accuracy and hence decreased costs to solve the nonlinear stage
equations. The relaxation method benefits additionally from taking larger
effective time steps since the relaxation parameter $\gamma > 1$.

\subsection{Superconvergence for Euclidean Hamiltonian Problems}
\label{sec:superconvergence}

The simplest Hamiltonian system is the harmonic oscillator
\eqref{eq:harmonic-osc}.
In Figure \ref{fig:harmonic-osc} we present convergence results for this
problem, integrated to final time $10$ and using four RK methods of orders
two to five. In the absence of relaxation, each scheme exhibits the expected
order of convergence. When relaxation is applied, the odd-order schemes
converge with a rate one order faster than their formal order.

\begin{figure}
\centering
  \begin{subfigure}{0.5\textwidth}
    \centering
    \includegraphics[width=\textwidth]{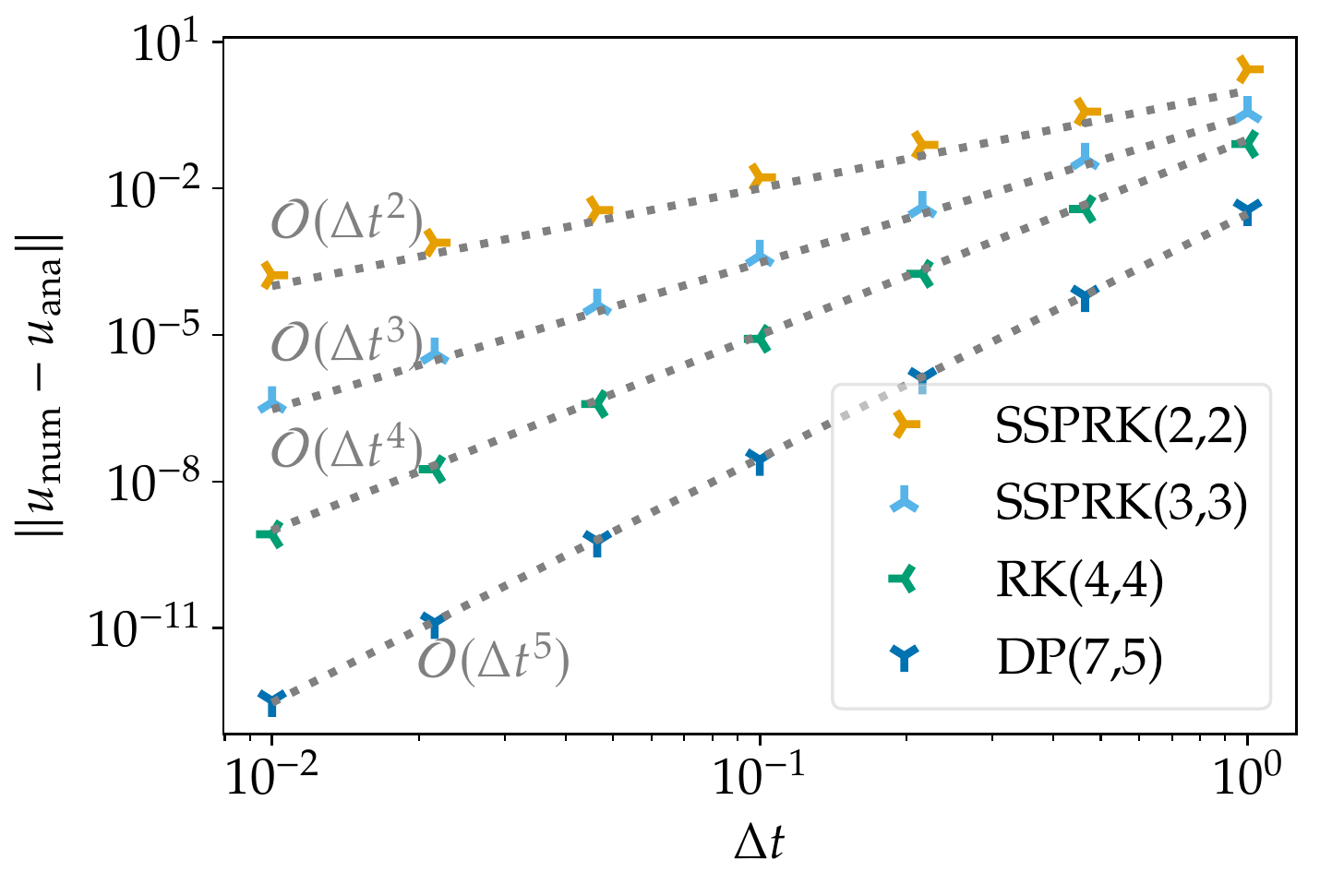}
    \caption{Baseline schemes.}
  \end{subfigure}%
  \begin{subfigure}{0.5\textwidth}
    \centering
    \includegraphics[width=\textwidth]{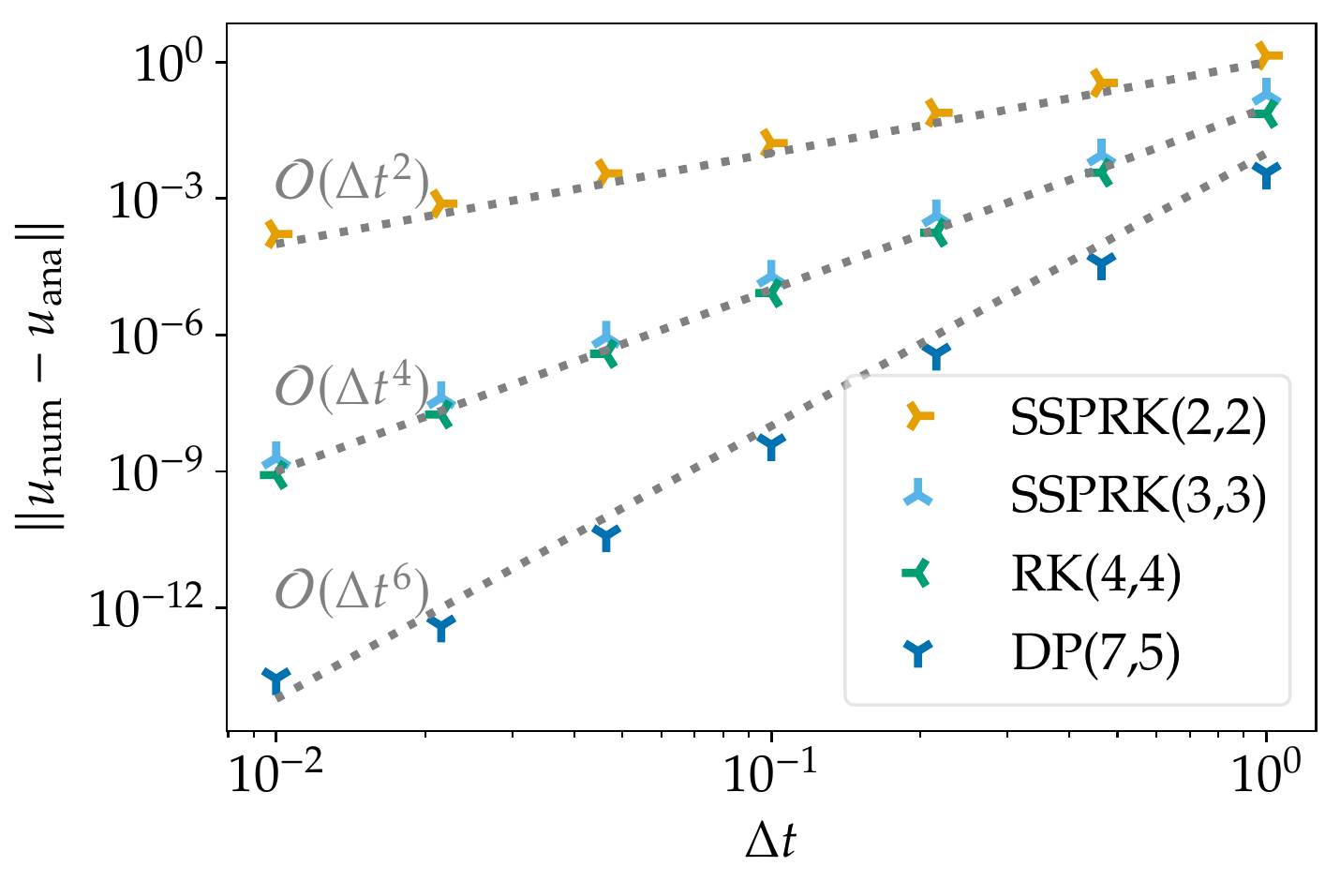}
    \caption{Relaxation schemes.}
  \end{subfigure}
  \caption{Convergence study using baseline Runge-Kutta and relaxation
            schemes for the harmonic oscillator \eqref{eq:harmonic-osc}.
            DP(7,5) is the fifth order method of
            Prince \& Dormand~\cite{prince1981high}.}
  \label{fig:harmonic-osc}
\end{figure}

Next, consider the nonlinear oscillator \eqref{eq:nonlinear-osc}.
In Figure \ref{fig:nonlinear-osc} we present convergence results
for this problem, integrated to final time $10$, again using RK
methods of orders two to five (but with different 3rd- and 5th-order
methods than in the previous example). Again,
the methods converge at the expected rate without relaxation, but the
odd-order methods show an increased rate of convergence when relaxation is applied.
Interestingly, the relaxation version of Heun's method has a significantly
smaller error than the classical RK(4,4) scheme, both with and without
relaxation.
\begin{figure}
\centering
  \begin{subfigure}{0.5\textwidth}
    \centering
    \includegraphics[width=\textwidth]{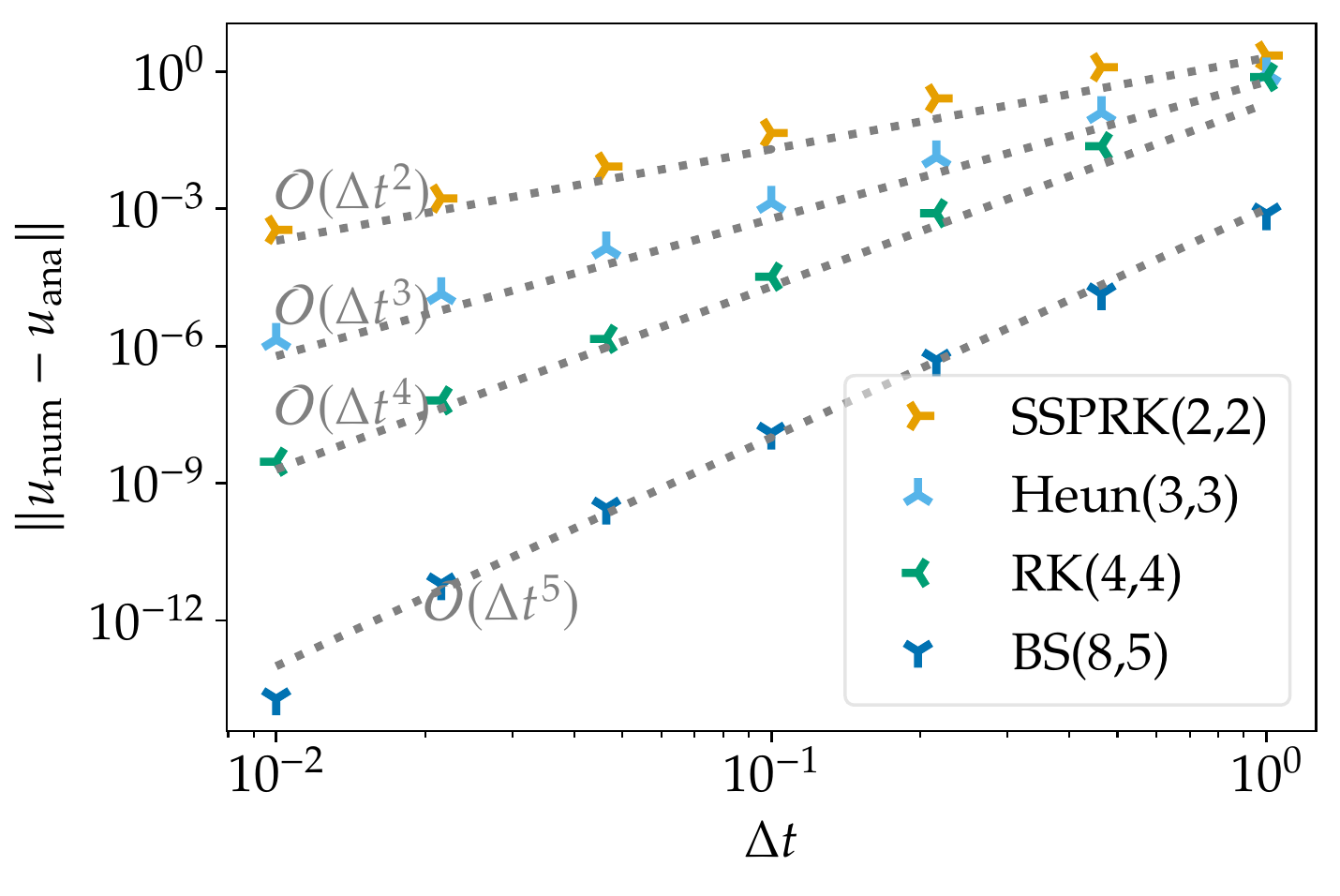}
    \caption{Baseline schemes.}
  \end{subfigure}%
  \begin{subfigure}{0.5\textwidth}
    \centering
    \includegraphics[width=\textwidth]{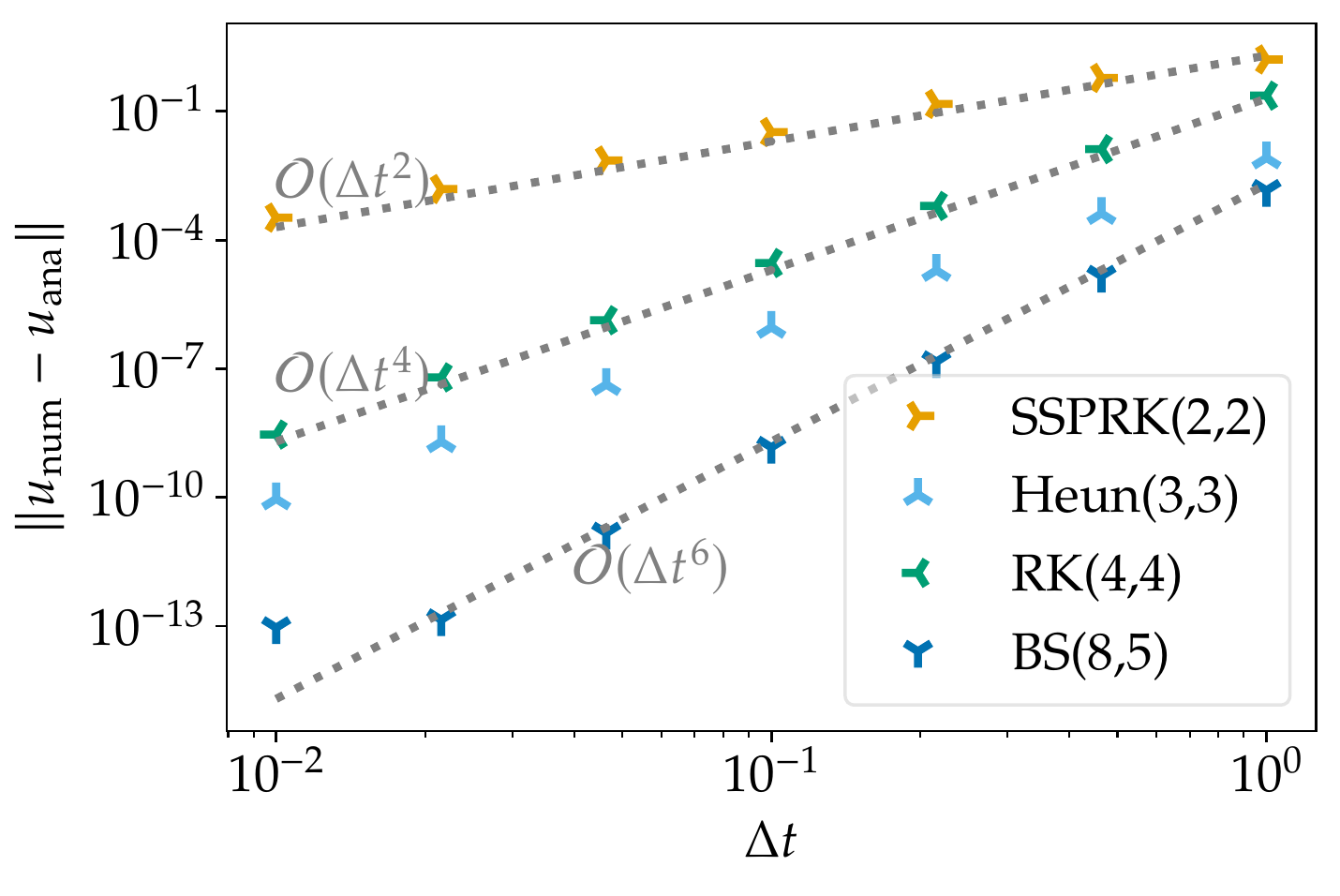}
    \caption{Relaxation schemes.}
  \end{subfigure}
  \caption{Convergence study using baseline Runge-Kutta and relaxation
           schemes for the nonlinear oscillator \eqref{eq:nonlinear-osc}.
           BS(8,5) is the fifth order method of
           Bogacki \& Shampine~\cite{bogacki1996efficient}.}
  \label{fig:nonlinear-osc}
\end{figure}

These two examples suggest a general result that we will state next.
Consider a Hamiltonian $H$ that is a smooth function of the squared
Euclidean norm, i.e.
\begin{equation}
  H(q,p) = G\bigl( (|q|^2 + |p|^2) / 2 \bigr),
\end{equation}
where $G$ is a smooth function. The corresponding Hamiltonian system is
\begin{equation}
\label{eq:nonlinear-Euclidean-Hamiltonian}
  \begin{cases}
    u'(t) = f(u(t)), \\
    u(0) = u_0,
  \end{cases}
  \quad
  u(t) = \begin{pmatrix} q(t) \\ p(t) \end{pmatrix},
  \quad
  f(q,p) = g\bigl( (|q|^2 + |p|^2) / 2 \bigr) \begin{pmatrix} p \\ -q \end{pmatrix},
\end{equation}
where $g = G'$.  We refer to \eqref{eq:nonlinear-Euclidean-Hamiltonian} as a
\emph{Euclidean Hamiltonian} problem.
For this class of problems, nominally odd order relaxation Runge-Kutta methods
are superconvergent.
\begin{theorem}
\label{thm:nonlinear-Euclidean-Hamiltonian}
  Consider the general nonlinear Euclidean Hamiltonian system
  \eqref{eq:nonlinear-Euclidean-Hamiltonian}
  and a Runge-Kutta method of order $p$ with $s$ stages.
  The corresponding RRK scheme conserving the Hamiltonian has an order
  of accuracy $p+1$ if $p$ is odd.
\end{theorem}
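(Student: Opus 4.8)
The plan is to exploit the rigid geometry of the Euclidean Hamiltonian flow and reduce the entire problem to a scalar complex model. First I would note that $\nabla H(u)=g(\tfrac12|u|^2)\,u$ is always parallel to $u$, so the exact flow preserves $|u|^2$; on the sphere $|u|=|u_0|=:r_0$ the factor $g(\tfrac12|u|^2)$ is frozen at $\omega_0:=g(\tfrac12 r_0^2)$. With $J=\left(\begin{smallmatrix}0&I\\-I&0\end{smallmatrix}\right)$ the system reads $u'=g(\tfrac12|u|^2)\,Ju$, and since $J^2=-I$ the plane $\operatorname{span}\{u_0,Ju_0\}$ is $f$-invariant; a short argument (using existence and uniqueness of the stage equations for $\dt$ small) shows that every stage vector and every relaxed update remains in this fixed plane, since $u^{n+1}_\gamma$ is always a real-linear combination of $u^n$ and $d^n\in\operatorname{span}\{u^n,Ju^n\}$. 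Identifying the plane with $\mathbb{C}$ via $z=x+\mathrm{i}y$ turns the dynamics into the scalar equation
\[
  z'=\mathrm{i}\,g(\tfrac12|z|^2)\,z,
\]
whose exact flow is the rotation $z\mapsto\e^{\mathrm{i}\omega_0 t}z$. Because $f$ is equivariant under $z\mapsto\e^{\mathrm{i}\alpha}z$, so is the Runge--Kutta map, and hence on the circle $|z|=r_0$ the baseline one-step map acts as multiplication by a single complex number, $z^{n+1}=S(\dt)\,z^n$, with local error $\bigl(S(\dt)-\e^{\mathrm{i}\omega_0\dt}\bigr)z^n$. Everything now reduces to analysing the scalar $S(\dt)=\e^{\mathrm{i}\omega_0\dt}+\O(\dt^{p+1})$.

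The decisive structural fact is a parity relation coming from reversibility. Since $\overline{f(\bar z)}=-f(z)$ and the Runge--Kutta weights are real, conjugating the stage equations shows that the numerical flow satisfies $\Phi_{-\dt}(z)=\overline{\Phi_{\dt}(\bar z)}$. Evaluating at the real point $z=r_0$ gives $S(-\dt)=\overline{S(\dt)}$, so in $S(\dt)=\sum_k s_k\dt^k$ the even-index coefficients are real and the odd-index ones are purely imaginary; the same parity holds for the Taylor coefficients of $\e^{\mathrm{i}\omega_0\dt}$. Consequently the leading error coefficient $s_{p+1}-\tfrac{(\mathrm{i}\omega_0)^{p+1}}{(p+1)!}$ is real exactly when $p+1$ is even, i.e.\ when $p$ is odd. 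In that case the $\O(\dt^{p+1})$ local error of the baseline method is a real multiple of $z^n$, i.e.\ purely \emph{radial} (a magnitude/energy defect), and one checks that the phase of $S$ already matches $\omega_0\dt$ to order $\dt^{p+2}$.

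It remains to show that relaxation converts this residual radial error into a gain of one order. The RRK update is $z^{n+1}_\gamma=\bigl(1+\gamma(S(\dt)-1)\bigr)z^n=:T(\dt)\,z^n$, and energy (hence radius) conservation forces $|T(\dt)|=1$, so $T(\dt)=\e^{\mathrm{i}\psi(\dt)}$ is a \emph{pure rotation}; the relaxed step is to be compared with the exact rotation by $\omega_0\gamma\dt$. Solving $|T(\dt)|=1$ gives $\gamma=1+\delta$ with $\delta=\O(\dt^{p-1})$ (the magnitude defect of $S$ is $\O(\dt^{p+1})$ and the relevant denominator $1-\cos(\omega_0\dt)=\O(\dt^2)$ is nonvanishing for a non-stationary solution, where $\omega_0\neq0$ and $d^n$ stays bounded away from $0$). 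Expanding $\psi(\dt)$ and using that the leading error is real, the phase mismatch works out to $\psi(\dt)-\omega_0\gamma\dt=\pm\delta\bigl(\omega_0\dt-\sin(\omega_0\dt)\bigr)+\O(\dt^{p+2})$; since $\omega_0\dt-\sin(\omega_0\dt)=\O(\dt^3)$ and $\delta=\O(\dt^{p-1})$, this mismatch is $\O(\dt^{p+2})$. Therefore the RRK local error is $\O(\dt^{p+2})$, and the standard one-step-to-global stability argument (now with the relaxed step sizes $\gamma^n\dt$) yields global order $p+1$.

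I expect the conceptual heart, and hence the main obstacle, to be the reduction to the scalar complex equation together with the conjugation symmetry $S(-\dt)=\overline{S(\dt)}$: this is what pins down that the odd-order leading error is radial rather than tangential, and it must be argued for the genuinely nonlinear $g$, not merely for the frozen-coefficient oscillator. The most delicate technical point afterwards is the last step, because relaxation forms the affine combination $1+\gamma(S-1)$, which perturbs both magnitude and phase; one must verify the precise cancellation $\delta\cdot\bigl(\omega_0\dt-\sin(\omega_0\dt)\bigr)=\O(\dt^{p+2})$ to be sure that eliminating the radial defect does not reintroduce an $\O(\dt^{p+1})$ phase error. The even-order case is simultaneously explained: there the leading error is imaginary (a phase defect), which relaxation cannot remove, so no superconvergence occurs.
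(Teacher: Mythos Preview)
Your argument is correct and takes a genuinely different route from the paper. The paper works entirely in the original coordinates and relies on B-series machinery: it proves two structural lemmas showing that for a Euclidean Hamiltonian system every elementary differential $F(t)(u_0)$ is parallel to $u_0$ when $|t|$ is even and to $f(u_0)$ when $|t|$ is odd, then expands $\gamma$ and $u_+^\gamma - u(\gamma\dt)$ in rooted trees and checks that the $\dt^{p+1}$ contributions cancel. Your approach is more geometric and elementary: you first observe that the two-dimensional plane $\operatorname{span}\{u_0, Ju_0\}$ is invariant under both the flow and the Runge--Kutta map, identify it with $\mathbb{C}$, and then exploit two symmetries---rotational equivariance (forcing the map on the circle $|z|=r_0$ to be multiplication by a scalar $S(\dt)$) and the conjugation identity $S(-\dt)=\overline{S(\dt)}$---to pin down the parity of the error coefficients. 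This replaces the inductive tree lemmas by a symmetry argument, and the final cancellation $\psi-\omega_0\gamma\dt=-\delta\bigl(\omega_0\dt-\sin(\omega_0\dt)\bigr)+\O(\dt^{p+2})$ (which I verified) is exactly the analogue of the paper's tree-sum cancellation. Your route is shorter and makes the mechanism (radial versus tangential error) transparent, and it immediately explains why even $p$ fails; the paper's B-series computation is heavier but stays closer to the standard order-condition framework and would adapt more readily if one wanted to analyse other invariants or more general structures. One small point worth making explicit when you write this up: the multiplier $S$ depends on the radius as well as on $\dt$, so the ``multiplication by $S(\dt)$'' description is valid precisely because the RRK iterates all lie on the single circle $|z|=r_0$, which you use but do not quite state.
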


\begin{remark}
  Every skew-symmetric linear system in two space dimensions satisfies the
  Euclidean Hamiltonian structure above. Linear skew-symmetric systems
  in three space dimensions must have a zero eigenvalue and can consequently
  be reduced to a skew-symmetric system in two space dimensions.
\end{remark}

\begin{remark}
\label{rem:nonlinear-is-linear}
  Since $H$ is conserved, the analytical solution of any Euclidean Hamiltonian problem evolves
  identically to that of the corresponding linear system (with fixed value of $g(u) = g(u_0)$)
  \begin{equation}
    \begin{cases}
      u'(t) = f(u(t)), \\
      u(0) = u_0,
    \end{cases}
    \quad
    u(t) = \begin{pmatrix} q(t) \\ p(t) \end{pmatrix},
    \quad
    f(q,p) = g\bigl( (|q_0|^2 + |p_0|^2) / 2 \bigr) \begin{pmatrix} p \\ -q \end{pmatrix}.
  \end{equation}
\end{remark}

The proof of Theorem~\ref{thm:nonlinear-Euclidean-Hamiltonian} is presented in
Appendix \ref{sec:proofs}.
As can be seen there,
the Euclidean Hamiltonian structure is crucial. In particular, the
result does not hold for general skew-symmetric linear systems conserving
the Euclidean norm, e.g.
\begin{equation}
  u'(t) = L u(t),
  \quad
  u(0) = \begin{pmatrix} 1 \\ 0 \\ 0 \\ 0 \end{pmatrix},
  \quad
  L =
      \begin{pmatrix}
        0 & 0 & -1 & -1 \\
        0 & 0 & 0 & -1 \\
        1 & 0 & 0 & -1 \\
        1 & 1 & 1 & 0
      \end{pmatrix}.
\end{equation}
Furthermore, the superconvergence result does not hold for general linear
Hamiltonian systems where the Hamiltonian is not an affine function of the
squared Euclidean norm, e.g.
\begin{equation}
\begin{gathered}
  \od{}{t} \begin{pmatrix} q(t) \\ p(t) \end{pmatrix}
  =
  \begin{pmatrix}
    0 & \I \\
    -\I & 0
  \end{pmatrix}
  \nabla_{(q,p)} H(q,p),
  \quad
  H(q,p) = \frac{1}{2} q^T Q q + \frac{1}{2} p^T P p,
  \\
  Q =
  \begin{pmatrix}
    1 & 1 \\
    1 & 2
  \end{pmatrix},
  \quad
  P =
  \begin{pmatrix}
    3 & 2 \\
    2 & 4
  \end{pmatrix},
  \quad
  q(0) = \begin{pmatrix} 1 \\ 0 \end{pmatrix},
  \quad
  p(0) = \begin{pmatrix} 0 \\ 0 \end{pmatrix}.
\end{gathered}
\end{equation}
We have conducted further numerical experiments, supporting the hypothesis
that the superconvergence property of RRK schemes is restricted to
Euclidean Hamiltonian ones.

\section{\texorpdfstring{$N$}{N}-Body Applications}
\label{sec:n-body}

Energy preservation is particularly important in the solution of $N$-body
problem arising in astrophysics and molecular dynamics.
In this section we investigate the behavior of RRK methods on problems from
these two areas.

\subsection{Outer Solar System}
\label{sec:N-body-problems-solar-system}

Consider the outer solar system consisting of the sun (including the
mass of the inner planets), Jupiter, Saturn, Uranus, Neptune, and Pluto
This can be described by a Hamiltonian system with
\begin{align*}
    H(p,q) = \frac{1}{2} \sum_{i=0}^5 \frac{1}{m_i} p_i^T p_i
                - G \sum_{i=1}^5\sum_{j=0}^{i-1} \frac{m_i m_j}{\|q_i-q_j\|}.
\end{align*}
We take the masses and initial data exactly as given in
\cite[Section~I.2.3]{hairer2006geometric}.
Applying the second order accurate method SSPRK(2,2) with a (rather
large) time step size of $\dt = 200$ (days) for a total time of
\num{200000} (days) yields the results shown in
Figure~\ref{fig:outer-solar-system-baseline}.
While the orbits should be essentially stationary on this time scale, the
numerical approximation yields perturbed orbits with a drift of the radius, which
is most pronounced for the planets with the shortest orbits. Additionally, the Hamiltonian
(energy) grows considerably over time.

\begin{figure}[t]
\centering
  \begin{subfigure}[b]{0.49\textwidth}
    \centering
    \includegraphics[width=\textwidth]{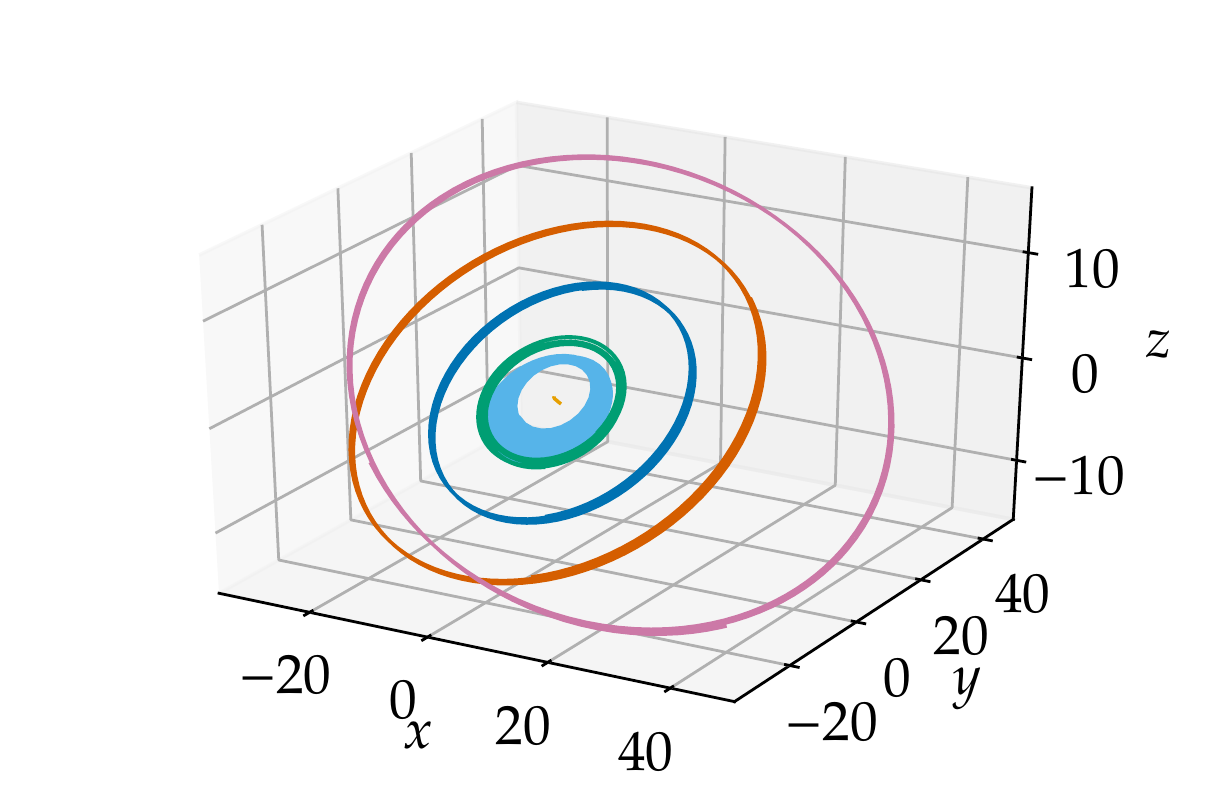}
    \caption{Orbits of the planets and the sun.}
  \end{subfigure}%
  ~
  \begin{subfigure}[b]{0.49\textwidth}
    \centering
    \includegraphics[width=\textwidth]{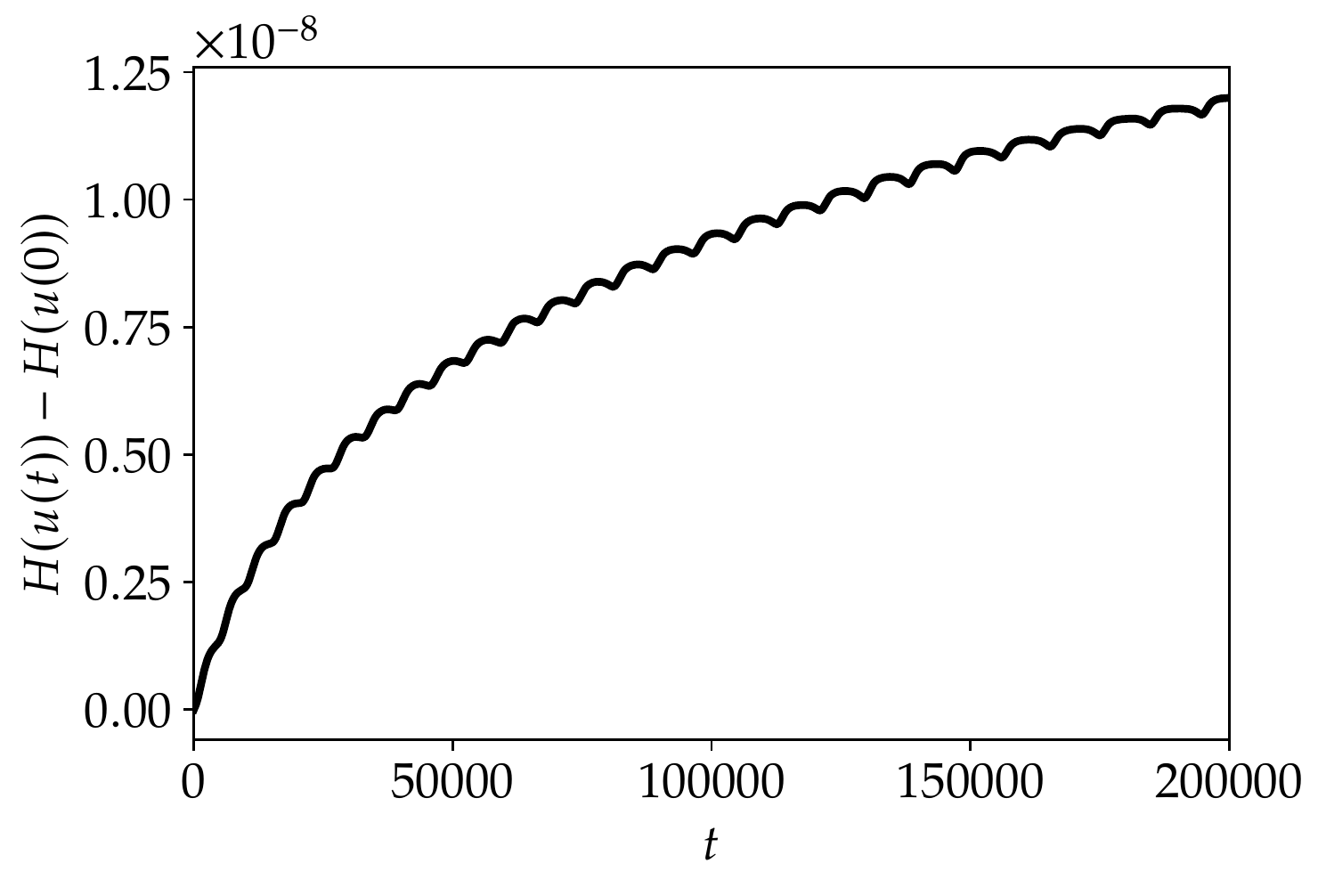}
    \caption{Change of the Hamiltonian over time.}
  \end{subfigure}%
  \caption{Simulation of the outer solar system using the baseline
           SSPRK(2,2) method.}
  \label{fig:outer-solar-system-baseline}
\end{figure}

Applying the relaxation approach to preserve the Hamiltonian during the
simulation does not necessarily increase the quality of the numerical
approximation in this case, as can be seen in
Figure~\ref{fig:outer-solar-system-relaxation}.
While the relaxation approach yields a constant (to machine accuracy)
Hamiltonian, the deviation in the orbits is similar or even worse.
This is in general agreement with the observations of
\cite[Example~IV.4.4]{hairer2006geometric}, where projection methods
conserving the Hamiltonian or even the Hamiltonian and the angular
momentum do not necessarily improve the quality of the numerical
solution. On the contrary, some orbits become even worse when applying
the projection methods used there. As stated in
\cite[Example~IV.4.4]{hairer2006geometric}: ``There is no doubt that
this problem contains a structure which cannot be correctly simulated
by methods that only preserve the total energy $H$ and the
angular momentum $L$''.

\begin{figure}
\centering
  \begin{subfigure}[b]{0.49\textwidth}
    \centering
    \includegraphics[width=\textwidth]{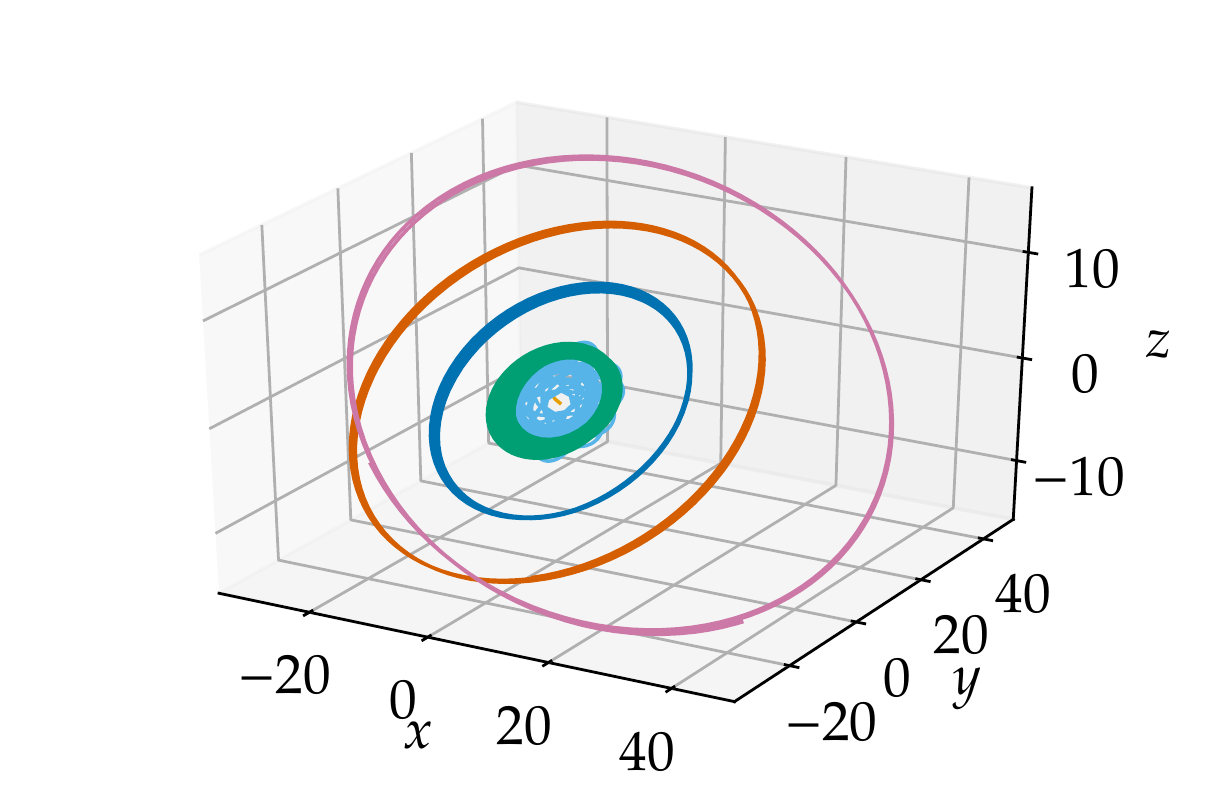}
    \caption{Orbits of the planets and the sun.}
  \end{subfigure}%
  ~
  \begin{subfigure}[b]{0.49\textwidth}
    \centering
    \includegraphics[width=\textwidth]{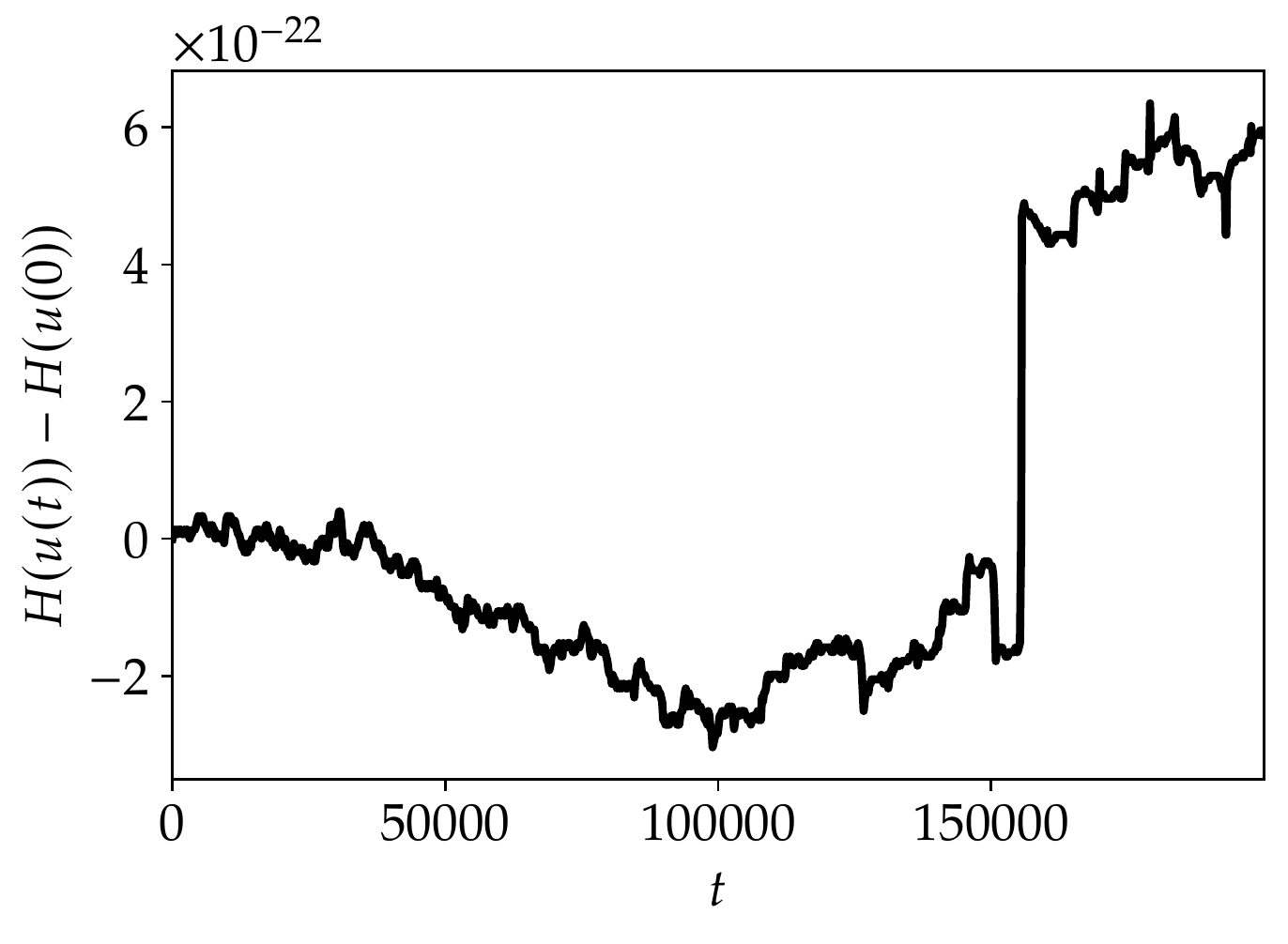}
    \caption{Change of the Hamiltonian over time.}
  \end{subfigure}%
  \caption{Simulation of the outer solar system using SSPRK(2,2)
           with relaxation to preserve the Hamiltonian (energy).}
  \label{fig:outer-solar-system-relaxation}
\end{figure}

\subsection{Molecular Dynamics}
\label{sec:N-body-problems-MD}

For the outer solar system discussed in the previous section, the
orbit of each body is important and has to be preserved as much as
possible. In contrast, in molecular dynamics simulations
``we do not need exact classical trajectories...
but must lay great emphasis on energy conservation as being of
primary importance.''~\cite[p.~99]{allen2017computer}
Thus it is reasonable to expect that energy conserving relaxation
Runge-Kutta methods can be applied advantageously to molecular dynamics
simulations.

Consider the example of a frozen argon crystal described by
Hairer, Lubich, and Wanner~\cite[Section~I.3.2]{hairer2006geometric},
consisting of argon atoms
interacting via the classical Lennard-Jones potential with an initial
condition close to the minimum energy state. As mentioned above, our
interest here is in statistical quantities such as the
total energy (Hamiltonian) and the temperature. While the Hamiltonian
is conserved, the temperature is expected to fluctuate around a certain
mean value. These properties are not preserved in general by numerical
discretizations.

Results of simulations of this system using the classical fourth
order Runge-Kutta method RK(4,4) with and without relaxation to
preserve the total energy are shown in Figure~\ref{fig:MD}.
As can be seen, the baseline scheme yields significant drift in the energy
and temperature.  Using instead the relaxation
approach, the Hamiltonian is conserved and the temperature fluctuates
around a mean value without significant drift.

To be competitive for molecular dynamics simulations, the cost of the
relaxation approach also has to be considered. While only a scalar
equation has to be solved per time step, it involves a computation
of the energy which is similarly expensive as computing the forces.
Hence, specialised implementations and considerations of using adapted
or inexact solutions for the relaxation parameter $\gamma$ should be
investigated, which is out of the scope of this article.

\begin{figure}
\centering
  \begin{subfigure}[b]{0.49\textwidth}
    \centering
    \includegraphics[width=\textwidth]{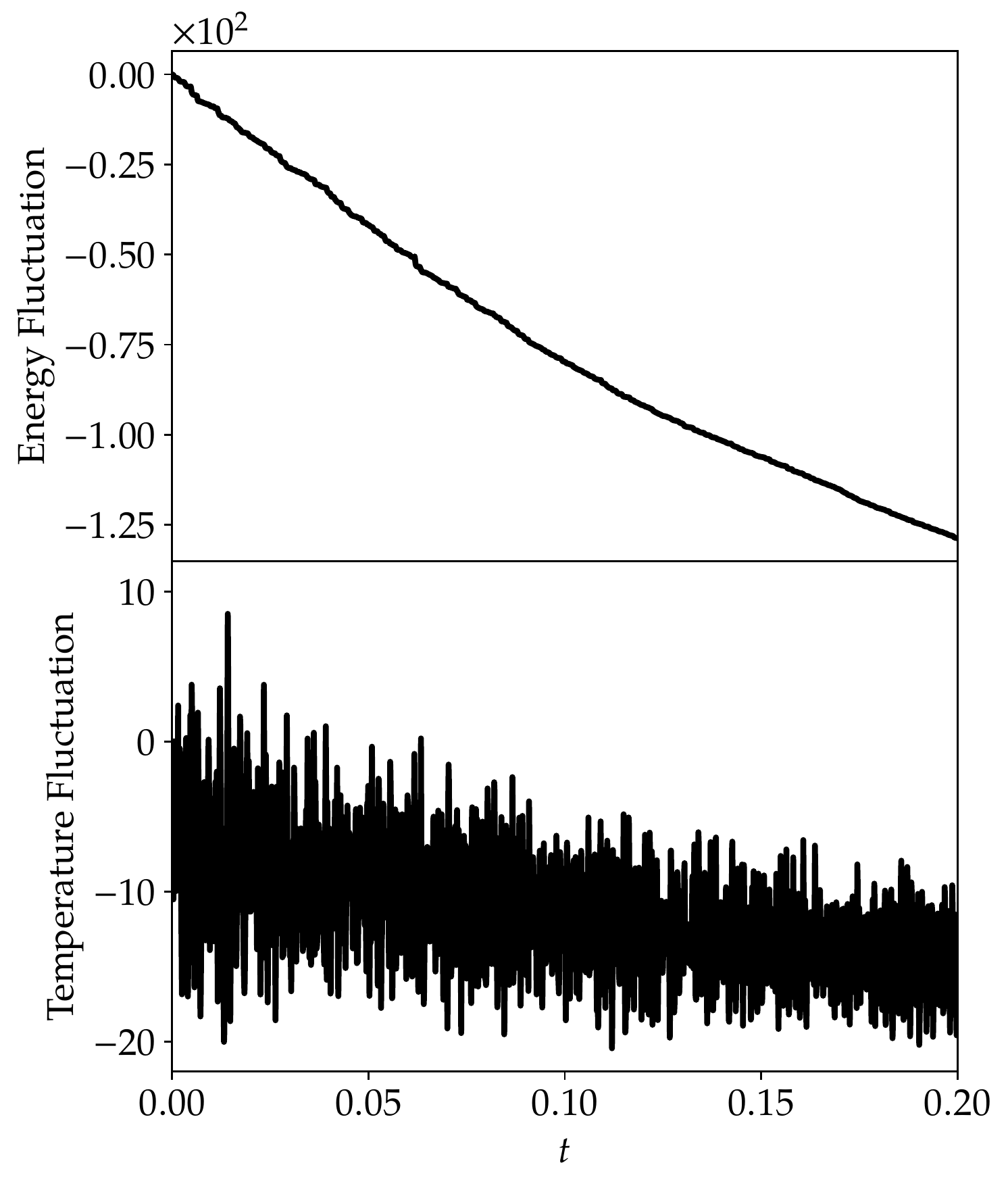}
    \caption{Baseline RK(4,4).}
  \end{subfigure}%
  ~
  \begin{subfigure}[b]{0.49\textwidth}
    \centering
    \includegraphics[width=\textwidth]{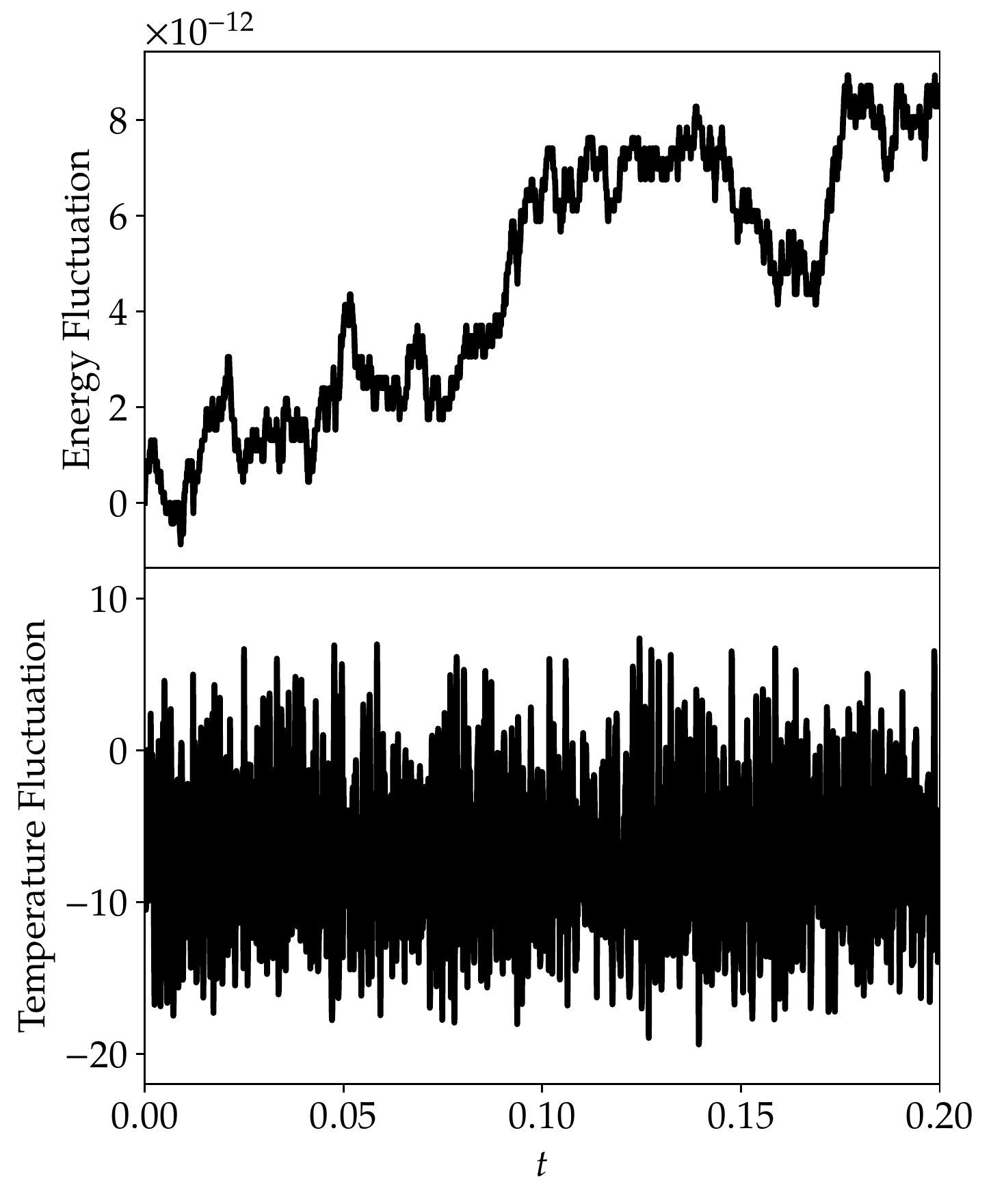}
    \caption{RK(4,4) with relaxation.}
  \end{subfigure}%
  \caption{Simulation of the frozen argon crystal with and without
           relaxation to preserve the Hamiltonian.}
  \label{fig:MD}
\end{figure}

\section{Conclusions}
\label{sec:conclusion}

We have studied relaxation Runge-Kutta (RRK) methods in the context of
differential equations with nonlinear invariants, especially Hamiltonian
systems. Similar to other structure preserving numerical schemes, RRK
methods result in an improved qualitative behavior of numerical solutions,
concerning e.g. the preservation of orbits or Poincaré sections
(Section~\ref{sec:qual}).
Additionally, they share advantageous quantitative properties with
symplectic methods such as a linear error growth for certain classes
of Hamiltonian problems (Section~\ref{sec:error-growth}). Furthermore, RRK
methods show a superconvergence property for Euclidean Hamiltonian systems
(Section~\ref{sec:superconvergence}). Finally, our limited results suggest
that RRK methods may be useful for Hamiltonian problems, such as those in
molecular dynamics, where statistical averages are of interest (Section~\ref{sec:n-body}).

For some classes of Hamiltonian problems, explicit symplectic methods such
as the Störmer Verlet scheme are available. Compared to these methods,
RRK methods have to solve a single scalar equation per time step which can
be a certain bottleneck. To be fully competetive, specialised implementations
will studied in a future work. On the other hand, RRK methods can be of
arbitrary order and allow more freedom in their design.

Whenever implicit schemes are advantageous, e.g. because of stiffness
restrictions, the relaxation approach provides an inexpensive means to
conserve important nonlinear quantities, which is usually
only possible for fully implicit methods. Hence, diagonally implicit RRK methods
seem very competitive for such problems.
In future work, we plan to deepen the investigation of RRK methods in this
context and study the combination of the relaxation approach with several
classes of schemes such as implicit-explicit (IMEX) methods and splitting
algorithms.

Another direction of future research is the application of relaxation
to enforce conservation with a Hamiltonian PDE semi-discretization that
is not conservative.  In this context the relaxation process would be used
to remove the conservation error created by the spatial discretization.
Some numerical experiments (not included here)
suggest that the relaxation approach can be applied successfully in this
context as long as the discretization errors in space and time are of a similar
order of magnitude. Nevertheless, deeper theoretical and empirical studies
are necessary.

\appendix
\section*{Acknowledgements}

Research reported in this publication was supported by the
King Abdullah University of Science and Technology (KAUST).
The authours would like to thank Ernst Hairer for a discussion of
symplecticity and the preservation of phase space volume.

\section{Superconvergence Theorem Proofs}
\label{sec:proofs}

To facilitate the proof of the general result \ref{thm:nonlinear-Euclidean-Hamiltonian},
we first present and prove a version restricted to linear problems.
\begin{theorem}
\label{thm:linear-Euclidean-Hamiltonian}
  Consider the linear Euclidean Hamiltonian system
    \begin{equation}
    \label{eq:linear-Euclidean-Hamiltonian}
      \begin{cases}
        u'(t) = L u(t), \\
        u(0) = u_0,
      \end{cases}
      \quad
      u(t) = \begin{pmatrix} q(t) \\ p(t) \end{pmatrix},
      \quad
      L = \alpha \begin{pmatrix} 0 & \I \\ -\I & 0 \end{pmatrix}.
    \end{equation}
  and an explicit Runge-Kutta method of order $p \in \N$ with $s \geq p$
  stages.
  The corresponding RRK scheme conserving the Hamiltonian has an order
  of accuracy $p+1$ if $p$ is odd.
\end{theorem}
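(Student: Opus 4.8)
The plan is to exploit the fact that, because $L=\alpha J$ with $J := \begin{pmatrix} 0 & \I \\ -\I & 0\end{pmatrix}$ satisfying $J^2 = -\I$, the system \eqref{eq:linear-Euclidean-Hamiltonian} is isometrically isomorphic to the scalar complex equation $z' = i\alpha z$, whose exact flow over a time $\tau$ is multiplication by $e^{i\alpha\tau}$. Writing $\theta := \alpha\dt$ and letting $R$ denote the (polynomial, real-coefficient) stability function of the explicit method, one step of the baseline scheme acts as multiplication by $R(i\theta)$, while $H$ is an affine function of $|u|^2$, so the relaxation condition $H(u^{n+1}_\gamma)=H(u^n)$ reduces to $|u^{n+1}_\gamma|=|u^n|$. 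First I would substitute $u^{n+1}_\gamma = \bigl[(1-\gamma)\I + \gamma R(\dt L)\bigr]u^n$ and use $(a\I+bJ)^{\!\top}(a\I+bJ)=(a^2+b^2)\I$ to turn this into a scalar quadratic in $\gamma$; its nontrivial root is
\begin{equation}
  \gamma = \frac{1}{1-R(i\theta)} + \frac{1}{1-\overline{R(i\theta)}},
\end{equation}
which is real, tends to $1$ as $\dt\to0$ (the branch whose existence is guaranteed by the general theory of \cite{ranocha2020relaxation}), and makes the relaxed step act as multiplication by the unimodular factor $w := -\dfrac{1-R(i\theta)}{1-\overline{R(i\theta)}}$.

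Next I would observe that, since the exact flow over the relaxed time $\gamma\dt$ is multiplication by the unimodular $e^{i\gamma\theta}$, the one-step error $u^{n+1}_\gamma - u(t_n+\gamma\dt) = (w - e^{i\gamma\theta})u^n$ is governed entirely by the phase discrepancy $F(\theta) := \arg w - \gamma\theta$, in the sense that its norm equals $2\,\lvert\sin(F(\theta)/2)\rvert\,\lvert u^n\rvert$. Thus the whole problem is reduced to estimating the order of vanishing of the scalar function $F$ at $\theta=0$.

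The key step is a symmetry argument. Because $R$ has real coefficients, $R(-i\theta)=\overline{R(i\theta)}$, so interchanging $R$ and $\overline{R}$ corresponds to $\theta\mapsto-\theta$. From the formulas above this gives $\gamma(-\theta)=\gamma(\theta)$ and $w(-\theta)=\overline{w(\theta)}$, hence $\arg w$ is odd and $F$ is an odd, real-analytic function of $\theta$. On the other hand, the baseline scheme has order $p$, so $R(i\theta)=e^{i\theta}+O(\theta^{p+1})$ and the general RRK order result yields $F(\theta)=O(\theta^{p+1})$. When $p$ is odd the exponent $p+1$ is even, a power that cannot appear in the Taylor series of the odd function $F$; therefore its coefficient vanishes and $F(\theta)=O(\theta^{p+2})$. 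This is a local phase (hence solution) error of size $O(\dt^{p+2})$, which upgrades the global order to $p+1$ by the standard telescoping argument, using that the exact flow here is an isometry and that the mismatch between the relaxed time grid and a uniform grid is handled exactly as in \cite{ranocha2020relaxation}.

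The main obstacle I anticipate is not the symmetry observation itself but the careful bookkeeping surrounding it: deriving the closed forms for $\gamma$ and $w$ and confirming that the relevant root is the one with $\gamma\to1$; verifying that ``order $p$'' for the baseline translates cleanly into $F=O(\theta^{p+1})$ rather than merely $F=O(\theta^{p})$ (which would otherwise require tracking the leading cancellation between $\arg w - \theta$ and $(\gamma-1)\theta$ explicitly); and making the passage from a local error of order $p+2$ to a global order $p+1$ rigorous on the nonuniform relaxed grid. Each of these is routine once the complex-scalar reduction is in place, and together they reduce the superconvergence claim to the single clean fact that an odd analytic function cannot have an even-order leading term.
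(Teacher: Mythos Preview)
Your proposal is correct and takes a genuinely different route from the paper's proof.

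The paper proceeds by direct computation: it writes $u_+^\gamma = u_0 + \gamma\sum_{k=1}^s \alpha_k \dt^k L^k u_0$ in terms of the monomial coefficients $\alpha_k$ of the stability polynomial, expands $\norm{u_+^\gamma}^2$ using $L^{2l} = (-1)^l \alpha^{2l}\I$, and simplifies the denominator of the resulting formula for $\gamma$ via a combinatorial identity (Lemma~\ref{lem:combinatorial-lemma}). This yields an explicit expansion $\gamma = 1 + c\,\dt^{p-1} + O(\dt^{p+1})$ with a concrete constant $c$, after which a term-by-term comparison of $u_+^\gamma$ with $u(\gamma\dt)$ shows that the $\dt^{p+1}$ coefficient vanishes precisely because of the block structure of $L$. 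Your approach replaces all of this with a symmetry argument: after the complex-scalar reduction you observe that the phase discrepancy $F(\theta)=\arg w - \gamma\theta$ is an odd real-analytic function of $\theta$ (since $R$ has real coefficients, so $\theta\mapsto-\theta$ swaps $R(i\theta)$ with its conjugate), and once the general RRK order result gives $F=O(\theta^{p+1})$, oddness kills the even-degree leading term when $p$ is odd. This is more conceptual and bypasses the combinatorial lemma entirely. What the paper's more computational route buys is that its structure---``numerator of $\gamma$ equals denominator plus a high-order correction,'' followed by explicit identification of the leading error term---is deliberately set up to mirror the proof of the nonlinear result (Theorem~\ref{thm:nonlinear-Euclidean-Hamiltonian}) via rooted-tree expansions; your parity argument, by contrast, relies on the relaxed step being multiplication by a unimodular scalar, which has no obvious analogue once the stage values no longer lie on the invariant sphere. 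So your argument is cleaner for the linear theorem in isolation, while the paper's is the template that generalizes.
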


In the proof of Theorem~\ref{thm:linear-Euclidean-Hamiltonian}, the following
lemma will be used.
\begin{lemma}
\label{lem:combinatorial-lemma}
  For $s,m \in \N$, $2m \le s+1$,
  \begin{equation}
    \sum_{n=\max\set{1-m, m-s}}^{\min\set{m-1, s-m}} (-1)^n \frac{1}{(m-n)! (m+n)!}
    =
    -2 (-1)^m \frac{1}{(2m)!}.
  \end{equation}
\end{lemma}
\begin{proof}
  Use some explicit calculations or the Mathematica \cite{mathematica12}
  notebook \texttt{Combinatorial\_Lemma.nb} in the repository
  \cite{ranocha2020relaxationHamiltonianRepro}.
\end{proof}

\begin{proof}[Proof of Theorem~\ref{thm:linear-Euclidean-Hamiltonian}]
  It suffices to consider the first step. The baseline RK method
  starting from $u_0$ yields
  \begin{equation}
    u_+ = u_0 + \sum_{k=1}^s \alpha_k \dt^k L^k u_0,
  \end{equation}
  where $\alpha_k$ are the monomial coefficients of the corresponding
  stability polynomial. The relaxation method results in the new value
  \begin{equation}
    u_+^\gamma = u_0 + \gamma \sum_{k=1}^s \alpha_k \dt^k L^k u_0
  \end{equation}
  with squared Euclidean norm
  \begin{equation}
    \norm{u_+^\gamma}^2
    =
    \left(
      2 \gamma \sum_{l=1}^{\floor{s/2}} (-1)^l \alpha_{2l} \dt^{2l} \norm{L^l u_0}^2
      + \gamma^2 \sum_{\mathclap{\substack{j,k=1 \\ j+k\text{ even}}}}^s (-1)^{(j-k)/2} \alpha_j \alpha_k h^{j+k} \norm{L^{(j+k)/2} u_0}^2
    \right) \norm{u_0}^2.
  \end{equation}
  The second sum can also be written as
  \begin{equation}
    \sum_{\mathclap{\substack{j,k=1 \\ j+k\text{ even}}}}^s \alpha_j \alpha_k (-1)^{(j-k)/2} h^{j+k} \norm{L^{(j+k)/2} u_0}^2
    =
    \sum_{m=1}^s \sum_{n=\max\set{1-m, m-s}}^{\min\set{m-1, s-m}} (-1)^n \alpha_{m-n} \alpha_{m+n} \norm{L^m u_0}^2.
  \end{equation}
  The non-zero value of the relaxation parameter conserving the
  Euclidean norm is
  \begin{equation}
  \label{eq:linear-superconvergence-gamma-general}
    \gamma
    =
    \frac{
      - 2 \sum_{l=1}^{\floor{s/2}} (-1)^l \alpha_{2l} \dt^{2l} \norm{L^l u_0}^2
    }{
      \sum_{m=1}^s \sum_{n=\max\set{1-m, m-s}}^{\min\set{m-1, s-m}} (-1)^n \alpha_{m-n} \alpha_{m+n} \norm{L^m u_0}^2
    }.
  \end{equation}

  For a $p$-th order baseline scheme,
  \begin{equation}
    \alpha_k = \frac{1}{k!}, \qquad k \in \set{1, \dots, p}.
  \end{equation}
  Hence, for odd $p$, the numerator of $\gamma$
  \eqref{eq:linear-superconvergence-gamma-general} is
  \begin{multline}
    - 2 \sum_{l=1}^{\floor{s/2}} (-1)^l \alpha_{2l} \dt^{2l} \norm{L^l u_0}^2
    \\
    =
    - 2 \sum_{l=1}^{\floor{p/2}} (-1)^l \frac{1}{(2l)!} \dt^{2l} \norm{L^l u_0}^2
    - 2 (-1)^{(p+1)/2} \alpha_{p+1} \dt^{p+1} \norm{L^{(p+1)/2} u_0}^2
    + \O(\dt^{p+3}).
  \end{multline}
  Because of Lemma~\ref{lem:combinatorial-lemma}, the denominator of $\gamma$
  \eqref{eq:linear-superconvergence-gamma-general} for odd $p$ is
  \begin{multline}
    \sum_{m=1}^s \sum_{n=\max\set{1-m, m-s}}^{\min\set{m-1, s-m}} (-1)^n \alpha_{m-n} \alpha_{m+n} \norm{L^m u_0}^2
    \\
    =
    - 2 \sum_{m=1}^{\floor{p/2}} (-1)^m \frac{1}{(2m)!} \norm{L^m u_0}^2
    - 2 (-1)^{(p+1)/2} \frac{1}{(p+1)!} \dt^{p+1} \norm{L^{(p+1)/2} u_0}^2
    + \O(\dt^{p+3}).
  \end{multline}
  Thus,
  \begin{equation}
  \label{eq:linear-superconvergence-gamma-special}
    \gamma
    =
    1 - 2 (-1)^{(p+1)/2} \left( \alpha_{p+1} - \frac{1}{(p+1)!} \right)
    \dt^{p-1} \frac{\norm{L^{(p+1)/2} u_0}^2}{\norm{L u_0}^2}
    + \O(\dt^{p+1}).
  \end{equation}

  Comparing the analytical solution
  \begin{equation}
    u(\gamma \dt)
    =
    \left( \sum_{k=0}^\infty \frac{\gamma^k \dt^k}{k!} L^k \right) u_0
  \end{equation}
  with the RRK solution,
  \begin{equation}
    u_+^\gamma - u(\gamma \dt)
    =
    \left(
      \sum_{k=1}^p \frac{1}{k!} (\gamma - \gamma^k) \dt^k L^k
    \right) u_0
    + \left( \alpha_{p+1} \gamma - \frac{\gamma^{p+1}}{(p+1)!} \right) \dt^{p+1} L^{p+1} u_0
    + \O(\dt^{p+2}).
  \end{equation}
  Inserting $\gamma$ \eqref{eq:linear-superconvergence-gamma-special} and
  expanding the term $(\gamma - \gamma^2) \dt^2 L^2 u_0$ results in
  \begin{equation}
    u_+^\gamma - u(\gamma \dt)
    =
    \left( \alpha_{p+1} - \frac{1}{(p+1)!} \right)
    \left(
      (-1)^{(p+1)/2} \frac{\norm{L^{(p+1)/2} u_0}^2}{\norm{L u_0}^2} L^2 u_0
      + L^{p+1} u_0
    \right)
    \dt^{p+1}
    + \O(\dt^{p+2}).
  \end{equation}
  Finally, the second term in brackets vanishes because of the special
  structure of $L$ \eqref{eq:linear-Euclidean-Hamiltonian}.
\end{proof}

To prove Theorem~\ref{thm:nonlinear-Euclidean-Hamiltonian}, expansions using
rooted trees will be applied, cf. \cite[Chapter~3]{butcher2016numerical}.
The following structural results will be used.
\begin{lemma}
\label{lem:nonlinear-Euclidean-Hamiltonian-terms-1}
  For the Euclidean Hamiltonian system \eqref{eq:nonlinear-Euclidean-Hamiltonian},
  $m \in \N$, and $n \in \set{0, \dots, m}$, there exists a smooth function
  $h_{m,n}$ such that
  \begin{equation}
  \label{eq:nonlinear-Euclidean-Hamiltonian-terms-1}
    f^{(m)}(\underbrace{f, \dots, f}_{n \text{ terms}}, \underbrace{u, \dots, u}_{m-n \text{ terms}})
    =
    \begin{cases}
      h_{m,n}(\norm{u}^2/2) f(u), & \text{if } n \text{ is even}, \\
      h_{m,n}(\norm{u}^2/2) u, & \text{if } n \text{ is odd}.
    \end{cases}
  \end{equation}
\end{lemma}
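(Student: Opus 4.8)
The plan is to rewrite the right-hand side in the compact form $f(u) = g(r)\,Ju$, where $r := \norm{u}^2/2$ and $J := \begin{pmatrix} 0 & \I \\ -\I & 0 \end{pmatrix}$ is the standard symplectic matrix, and then to record the elementary identities that drive everything:
\[
  \nabla r = u, \qquad J^T = -J, \qquad J^2 = -\I, \qquad \scp{u}{Ju} = 0,
\]
which immediately give $\scp{u}{f(u)} = 0$, $\scp{f(u)}{f(u)} = 2 r\, g(r)^2$, and $Jf(u) = -g(r)\,u$. Since $f(u) = g(r)\,Ju$, the assertion is equivalent to showing that the left-hand side of \eqref{eq:nonlinear-Euclidean-Hamiltonian-terms-1} is a smooth scalar function of $r$ times $Ju$ when $n$ is even and times $u$ when $n$ is odd; this is the form I would actually establish.

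Next I would prove a purely structural description of the $m$-linear form $f^{(m)}(u)[w_1,\dots,w_m]$ for \emph{arbitrary fixed} directions $w_1,\dots,w_m$, by induction on $m$: it is a finite sum of elementary terms
\[
  g^{(a)}(r)\; \Big(\textstyle\prod \text{scalar inner products}\Big)\; V,
\]
where every inner product is either $\scp{u}{w_i}$ (the base point against one direction) or $\scp{w_i}{w_j}$ (two directions), the vector factor $V$ is either $Ju$ or some $Jw_{i}$, and each index $1,\dots,m$ is used exactly once across all factors. The inductive step is the product rule applied to one more directional derivative $\frac{\mathrm{d}}{\mathrm{d}t}\big|_{t=0}$ in a fresh direction $w_{m+1}$: it turns $g^{(a)}(r)$ into $g^{(a+1)}(r)\,\scp{u}{w_{m+1}}$, turns a factor $\scp{u}{w_i}$ into $\scp{w_{m+1}}{w_i}$, and turns the vector factor $Ju$ into $Jw_{m+1}$, while any factor $\scp{w_i}{w_j}$ or $Jw_i$ (with $w_i,w_j$ fixed) is annihilated. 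Each of these operations introduces $w_{m+1}$ exactly once and preserves the claimed shape, so the description propagates.

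I would then substitute $w_i \in \set{u, f(u)}$ with exactly $n$ of them equal to $f(u)$ and use the vanishing $\scp{u}{f(u)} = 0$ to discard every term in which the base point, or a $u$-direction, is paired with an $f$-direction. In a surviving term each $f$-direction is therefore either paired with another $f$-direction through $\scp{f(u)}{f(u)} = 2r g(r)^2$ or occupies the vector slot as $Jf(u) = -g(r)\,u$, while each $u$-direction contributes $\scp{u}{u} = 2r$. A short parity count then finishes the proof: if the vector slot is of $Ju$-type (namely $V = Ju$, or $V = Jw_i$ with $w_i = u$), all $f$-directions pair off, so $n$ is even and the term is a smooth multiple of $Ju$; if instead $V = Jf(u) = -g(r)\,u$, one $f$-direction is consumed there and the rest pair off, forcing $n$ odd and a smooth multiple of $u$. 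Hence the direction is dictated entirely by the parity of $n$, and collecting the scalar prefactors — all built from $g^{(a)}(r)$, $2r$, $2r g(r)^2$, and $-g(r)$ — yields the smooth functions $h_{m,n}$.

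The main obstacle is the bookkeeping: verifying that the structural form is genuinely closed under differentiation and, more delicately, that $\scp{u}{f(u)}=0$ eliminates \emph{exactly} the cross terms, so that only one vector direction survives for each parity of $n$. One further subtlety concerns smoothness in the even case: writing a multiple $c(r)\,Ju$ in the form $h_{m,n}(r)\,f(u) = h_{m,n}(r)\,g(r)\,Ju$ requires $h_{m,n} = c/g$, and $c$ need not contain a factor of $g$, as already $f''(u)[u,u] = (4 r^2 g''(r) + 6 r g'(r))\,Ju$ shows. This is harmless in our setting, because along any solution $r$, and hence $g(r)$, is conserved and nonzero unless the motion is stationary; for odd $n$ the coefficients are smooth with no such caveat.
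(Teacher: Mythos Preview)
Your proof is correct and takes a genuinely different route from the paper's. The paper argues by induction on $m$ directly on the identity \eqref{eq:nonlinear-Euclidean-Hamiltonian-terms-1}: it differentiates both sides with respect to $u^j$, contracts with $u^j$ or with $f^j$, observes that on the right-hand side this either preserves or flips the direction $f\leftrightarrow u$, and then checks (using the induction hypothesis and the base-case formulas for $f'f$ and $f'u$) that every product-rule term on the left except the one containing $f^{(m+1)}$ behaves the same way, forcing the $f^{(m+1)}$ term to comply. Your approach instead first establishes, for \emph{arbitrary} directions $w_1,\dots,w_m$, a closed structural description of $f^{(m)}(u)[w_1,\dots,w_m]$ as a sum of elementary terms $g^{(a)}(r)\cdot(\text{inner products})\cdot J(\cdot)$, and only afterwards specializes to $w_i\in\{u,f(u)\}$, using $\scp{u}{f(u)}=0$ to kill all cross terms and a parity count to finish. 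Your argument buys more information: it yields an explicit combinatorial description of every $h_{m,n}$ in terms of $g,g',\dots$ and $r$, whereas the paper's subtraction argument only gives existence. You also surface a point the paper glosses over, namely that for even $n$ the coefficient arises as $c(r)/g(r)$ and is smooth only where $g\neq 0$; the lemma (and the paper's induction) tacitly assumes this, which is harmless in the application since $g(r_0)=0$ corresponds to a stationary solution.
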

\begin{proof}[Proof by induction]
  The induction hypothesis is fulfilled for the basic cases
  $m = 1$ \& $n \in \set{0,1}$, since
  \begin{equation}
  \label{eq:nonlinear-Euclidean-Hamiltonian-f'f}
  \begin{gathered}
    f'(q, p)
    =
    \begin{pmatrix}
      g' p q^T & g' p p^T + g \I \\
      - g' q q^T - g \I & - g' q p^T
    \end{pmatrix},
    \\
    f' f
    =
    - g^2 \begin{pmatrix} q \\ p \end{pmatrix}
    \propto
    u,
    \quad
    f' u
    =
    \bigl( g' (|q|^2 + |p|^2) + g \bigr) \begin{pmatrix} p \\ -q \end{pmatrix}
    \propto
    f,
  \end{gathered}
  \end{equation}
  and $g, g'$ depend on $\norm{u}^2$.

  The induction step from $m$ to $m+1$ can be carried out by differentiating
  the identity \eqref{eq:nonlinear-Euclidean-Hamiltonian-terms-1}.
  For even $n$, the derivative with respect to $u^j$ of the right hand side is
  \begin{equation}
    \partial_j h_{m,n} f^i
    =
    h_{m,n}' u^j f^i + h_{m,n} f^i_j.
  \end{equation}
  Multiplication by $u^j$ and $f^j$, respectively, yield
  \begin{equation}
  \begin{aligned}
    (\partial_j h_{m,n} f^i) u^j
    &=
    (h_{m,n}' \norm{u}^2 + g' \norm{u}^2 + g) f^i,
    \\
    (\partial_j h_{m,n} f^i) f^j
    &=
    -g^2 h_{m,n} u^i.
  \end{aligned}
  \end{equation}
  Similarly, for odd $n$, the derivative of the right hand side is
  \begin{equation}
    \partial_j h_{m,n} u^i
    =
    h_{m,n}' u^j u^i + h_{m,n} \delta^i_j
  \end{equation}
  and multiplication by $u^j$ and $f^j$, respectively, result in
  \begin{equation}
  \begin{aligned}
    (\partial_j h_{m,n} u^i) u^j
    &=
    (h_{m,n}' \norm{u}^2 + h_{m,n}) u^i,
    \\
    (\partial_j h_{m,n} u^i) f^j
    &=
    h_{m,n} f^i.
  \end{aligned}
  \end{equation}
  For all $n$, multiplication of the derivative of the right hand side by
  $u^j$ doesn't change the direction while multiplication by $f^j$ flips the
  direction from $u$ to $f$ and vice versa.

  The derivative with respect to $u^j$ of the left hand side of
  \eqref{eq:nonlinear-Euclidean-Hamiltonian-terms-1} is
  \begin{equation}
  \begin{aligned}
    &\quad
    \partial_j f^i_{j_1 \dots j_m} f^{j_{1}} \dots f^{j_{n}} u^{j_{n+1}} \dots u^{j_{m}}
    \\
    &=
    f^i_{j_1 \dots j_m j} f^{j_{1}} \dots f^{j_{n}} u^{j_{n+1}} \dots u^{j_{m}}
    \\&\quad
    +
    f^i_{j_1 \dots j_m} f^{j_{1}}_j f^{j_{2}} \dots f^{j_{n}} u^{j_{n+1}} \dots u^{j_{m}}
    + \dots +
    f^i_{j_1 \dots j_m} f^{j_{1}} \dots f^{j_{n-1}} f^{j_{n}}_j u^{j_{n+1}} \dots u^{j_{m}}
    \\&\quad
    +
    f^i_{j_1 \dots j_m} f^{j_{1}} \dots f^{j_{n}} \delta^{j_{n+1}}_j u^{j_{n+2}} \dots u^{j_{m}}
    + \dots +
    f^i_{j_1 \dots j_m} f^{j_{1}} \dots f^{j_{n}} u^{j_{n+1}} \dots u^{j_{m-1}} \delta^{j_{m}}_j.
  \end{aligned}
  \end{equation}
  Multiplication of all but the first term on the right hand side by $u^j$
  doesn't change the direction while multiplication by $f^j$ flips the
  direction from $u$ to $f$ and vice versa, exactly as for the derivative
  of the right hand side of \eqref{eq:nonlinear-Euclidean-Hamiltonian-terms-1}.
  Hence, the term
  $f^i_{j_1 \dots j_m j} f^{j_{1}} \dots f^{j_{n}} u^{j_{n+1}} \dots u^{j_{m}}$
  must show the same behavior, proving the induction hypothesis
  \eqref{eq:nonlinear-Euclidean-Hamiltonian-terms-1} for $m+1$ instead of $m$.
\end{proof}

\begin{lemma}
\label{lem:nonlinear-Euclidean-Hamiltonian-terms}
  For the Euclidean Hamiltonian system \eqref{eq:nonlinear-Euclidean-Hamiltonian}
  and a rooted tree $t$,
  \begin{equation}
  \label{eq:nonlinear-Euclidean-Hamiltonian-terms}
    \begin{cases}
      F(t)(u_0) \parallel f(u_0), & \text{if } |t| \text{ is odd}, \\
      F(t)(u_0) \parallel u_0,    & \text{if } |t| \text{ is even},
    \end{cases}
  \end{equation}
  where $\parallel$ indicates that two vectors are parallel.
\end{lemma}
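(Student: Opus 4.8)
The plan is to prove the claim by structural induction on the rooted tree $t$, exploiting the recursive definition of the elementary differential. Recall that if the root of $t$ carries the subtrees $t_1, \dots, t_k$, then $F(t)(u_0) = f^{(k)}(u_0)\bigl(F(t_1)(u_0), \dots, F(t_k)(u_0)\bigr)$, with $|t| = 1 + \sum_{i=1}^k |t_i|$, and the base case is the single node with elementary differential $f(u_0)$. The base case is immediate, since a single node has $|t| = 1$ odd and $f(u_0)$ is trivially parallel to $f(u_0)$.

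For the induction step, I would assume the claim for all $t_i$, so that each $F(t_i)(u_0)$ is a scalar multiple of $f(u_0)$ when $|t_i|$ is odd and a scalar multiple of $u_0$ when $|t_i|$ is even. First I would factor out these scalars: since $f^{(k)}(u_0)$ is a symmetric $k$-linear map, multilinearity gives $F(t)(u_0) = \bigl(\prod_i \lambda_i\bigr)\, f^{(k)}(u_0)(v_1, \dots, v_k)$, where each $v_i \in \{f(u_0),\, u_0\}$ and the $\lambda_i$ are the scalar coefficients supplied by the induction hypothesis. Because $f^{(k)}$ is symmetric, only the number $n$ of arguments equal to $f(u_0)$ matters, so after reordering the arguments this expression is exactly one of the quantities analyzed in Lemma~\ref{lem:nonlinear-Euclidean-Hamiltonian-terms-1}.

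Next I would invoke that lemma: $f^{(k)}(u_0)(v_1,\dots,v_k)$ is parallel to $f(u_0)$ when $n$ is even and parallel to $u_0$ when $n$ is odd, the coefficients $h_{k,n}$ evaluated at $\norm{u_0}^2/2$ being mere scalars (and the vanishing case causing no difficulty, as the zero vector is parallel to everything). It then remains to relate the parity of $n$ to that of $|t|$. By the induction hypothesis $n$ counts precisely the subtrees with $|t_i|$ odd, so $\sum_i |t_i| \equiv n \pmod 2$, whence $|t| = 1 + \sum_i |t_i| \equiv 1 + n \pmod 2$. Thus $|t|$ odd forces $n$ even, giving $F(t)(u_0) \parallel f(u_0)$, while $|t|$ even forces $n$ odd, giving $F(t)(u_0) \parallel u_0$, exactly the asserted dichotomy.

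The substantive content is entirely contained in Lemma~\ref{lem:nonlinear-Euclidean-Hamiltonian-terms-1}, which already encodes the crucial direction-flipping behavior of the Euclidean Hamiltonian vector field: contracting a derivative of $f$ against $f$ swaps the $u \leftrightarrow f$ direction, whereas contracting against $u$ preserves it. Granting that lemma, the present argument is essentially bookkeeping, and I expect the only points needing care to be the clean factoring of the scalar coefficients $\lambda_i$ and the legitimacy of reordering the arguments of $f^{(k)}$ into the $f$-then-$u$ pattern of Lemma~\ref{lem:nonlinear-Euclidean-Hamiltonian-terms-1} (both justified by symmetry and multilinearity), together with the elementary parity count.
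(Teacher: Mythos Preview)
Your proof is correct and follows essentially the same strategy as the paper: an induction on rooted trees that bottoms out in Lemma~\ref{lem:nonlinear-Euclidean-Hamiltonian-terms-1}. Your direct structural induction (reduce each subtree via the hypothesis, factor out the scalars by multilinearity, reorder by symmetry of $f^{(k)}$, invoke the lemma, and finish with the parity count $|t|\equiv 1+n\pmod 2$) is in fact somewhat cleaner than the paper's version, which carries a strengthened induction hypothesis tracking how leaf-addition and argument-swapping flip the direction.
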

\begin{proof}
  The result is proved by induction using the hypothesis
  ``Consider a rooted tree $t = [t_1, \dots, t_m]$ with elementary differential
  $F(t)(u_0) = f^{(m)}(u_0)\bigl( F(t_1)(u_0), \dots, F(t_m)(u_0) \bigr)$.
  If a leaf is added to one of the $t_i$ or the direction of one of the arguments
  of $f^{(m)}(u_0)$ is changed from $u_0$ to $f(u_0)$ or vice versa, the direction
  of $F(t)(u_0)$ changes from $u_0$ to $f(u_0)$ and vice versa.
  Additionally, \eqref{eq:nonlinear-Euclidean-Hamiltonian-terms} holds.''

  The induction hypothesis is fulfilled for the base case $|t| = 1$ because of
  \eqref{eq:nonlinear-Euclidean-Hamiltonian-f'f}.

  Induction step: Appending a leaf or changing the direction of one of the
  arguments flips the direction because of the induction hypothesis.
  The bushy trees behave as desired because of
  Lemma~\ref{lem:nonlinear-Euclidean-Hamiltonian-terms-1}.
\end{proof}

\begin{proof}[Proof of Theorem~\ref{thm:nonlinear-Euclidean-Hamiltonian}]
  To generalize the approach for the linear case in the proof of
  Theorem~\ref{thm:linear-Euclidean-Hamiltonian}, the leading order
  term of $\gamma - 1$ has to be computed and $u_+^\gamma$ has to be
  compared with $u(\gamma \dt)$.

  The relaxation parameter $\gamma$ can be written as
  \cite[eq. (11) and Remark~4]{ketcheson2019relaxation}
  \begin{equation}
    \gamma
    =
    \frac{
      2 \sum_{i,j=1}^s b_i a_{ij} \scp{f_i}{f_j}
    }{
      \sum_{i,j=1}^s b_i b_j \scp{f_i}{f_j}
    }
    =
    \frac{
      2 \norm{u_0}^2 - 2 \scp{u_+}{u_0}
    }{
      \norm{u_+ - u_0}^2
    }
    =
    \frac{
      2 \norm{u_0}^2 - 2 \scp{u_+}{u_0}
    }{
      2 \norm{u_0}^2 - 2 \scp{u_+}{u_0} + (\norm{u_+}^2 - \norm{u_0}^2).
    }.
  \end{equation}
  Hence, the denominator of $\gamma$ is the numerator plus a high order
  correction $\norm{u_+}^2 - \norm{u_0}^2$, exactly as in the linear case.

  For $n \in \N$, the approximate solution of the baseline RK scheme after
  one step can be expanded as \cite[eq.~(313c)]{butcher2016numerical}
  \begin{equation}
  \label{eq:expansion-uplus}
    u_+
    =
    u_0 + \sum_{|t| \leq n} \frac{1}{\sigma(t)} \Phi(t) \dt^{|t|} F(t)(u_0)
    + \O(\dt^{n+1}).
  \end{equation}
  For an at least second order baseline scheme, the numerator of $\gamma$ is
  \begin{multline}
    2 \norm{u_0}^2 - 2 \scp{u_+}{u_0}
    =
    - 2 \sum_{|t| = 2} \frac{1}{\sigma(t)} \Phi(t) \dt^{|t|} F(t)(u_0)
    + \O(\dt^{3})
    \\
    =
    - 2 \underbrace{\sum_{i=1}^s b_i c_i}_{= 1/2} \dt^2 \scp{ f' f(u_0) }{ u_0 }
    + \O(\dt^{3}).
  \end{multline}
  Because of \eqref{eq:nonlinear-Euclidean-Hamiltonian-f'f},
  \begin{equation}
    \scp{ f' f(u_0) }{ u_0 }
    =
    - g^2 |q|^2 - g^2 |p|^2.
  \end{equation}
  This is in perfect agreement with the corresponding term $\norm{L u_0}^2$
  in the linear case.

  Since the baseline method has an order of accuracy $p$,
  \begin{equation}
    \norm{u_+}^2 - \norm{u_0}^2 = \lot \dt^{p+1} + \O(\dt^{p+2}),
  \end{equation}
  where $\lot$ denotes the leading order term.
  In conclusion, the relaxation parameter $\gamma$ can be expanded as
  \begin{equation}
    \gamma
    =
    1 - \frac{\lot}{g^2 (|q_0|^2 + |p_0|^2)} \dt^{p-1} + \O(\dt^{p}).
  \end{equation}

  To compute the order of accuracy, the expansions
  \cite[eqs.~(311d) and (313c)]{butcher2016numerical}
  \begin{align}
    \label{eq:expansion-u-dt}
    u(\dt)
    &=
    u_0 + \sum_{|t| \leq p+1} \frac{1}{\sigma(t) t!} \dt^{|t|} F(t)(u_0)
    + \O(\dt^{p+2}),
    \\
    \label{eq:expansion-u-gamma-dt}
    u(\gamma \dt)
    &=
    u_0 + \sum_{|t| \leq p+1} \frac{1}{\sigma(t) t!} \gamma^{|t|} \dt^{|t|} F(t)(u_0)
    + \O(\dt^{p+2}),
    \\
    \label{eq:expansion-uplus-gamma}
    u_+^\gamma
    =
    u_0 + \gamma (u_+ - u_0)
    &=
    u_0 + \sum_{|t| \leq p+1} \frac{1}{\sigma(t)} \Phi(t) \gamma \dt^{|t|} F(t)(u_0)
    + \O(\dt^{p+2}),
  \end{align}
  will be used. The order conditions
  \begin{equation}
    \Phi(t) = \frac{1}{t!},
    \qquad
    |t| \leq p,
  \end{equation}
  are satisfied for the baseline RK scheme. Hence,
  \begin{equation}
    u_+^\gamma - u(\gamma \dt)
    =
    (\gamma - \gamma^2) \sum_{|t| = 2} \frac{1}{\sigma(t)} \Phi(t) \dt^2 F(t)(u_0)
    + \sum_{|t| = p+1} \frac{1}{\sigma(t)} \left( \Phi(t) - \frac{1}{t!} \right) \dt^{p+1} F(t)(u_0)
    + \O(\dt^{p+2}).
  \end{equation}
  Inserting the value of $\gamma$ and the only rooted tree $t = \rootedtree[*]$
  of order two,
  \begin{equation}
    (\gamma - \gamma^2) \sum_{|t| = 2} \frac{1}{\sigma(t)} \Phi(t) \dt^2 F(t)(u_0)
    =
    \frac{\dt^{p+1}}{2} \frac{\lot}{g^2 (|q_0|^2 + |p_0|^2)} f'f(u_0).
  \end{equation}
  Using $\|u_0\|^2 = \|u(\dt)\|^2$ and the expansions
  \eqref{eq:expansion-uplus} \& \eqref{eq:expansion-u-dt},
  \begin{multline}
    \lot
    =
    2 \sum_{|t| = p+1} \frac{1}{\sigma(t)} \left( \Phi(t) - \frac{1}{t!} \right) \scp{ F(t)(u_0) }{ u_0 }
    \\
    + \underbrace{\sum_{|t_1| + |t_2| = p+1} \frac{1}{\sigma(t_1) \sigma(t_2)} \left( \Phi(t_1) \Phi(t_2) - \frac{1}{t_1! \, t_2!} \right) \scp{ F(t_1)(u_0) }{ F(t_2)(u_0) }}_{= 0}.
  \end{multline}
  The last sum on the right hand side vanishes, since
  $|t_1|, |t_2| \in \set{1, \dots, p}$
  and consequently $\Phi(t_i) = \nicefrac{1}{t_i!}$ because of the
  order conditions.

  Finally, using Lemma~\ref{lem:nonlinear-Euclidean-Hamiltonian-terms}
  and inserting $f' f(u_0)$, noticing that $\|u_0\|^2 = |q_0|^2 + |p_0|^2$,
  \begin{equation}
  \begin{aligned}
    &\quad
    \frac{1}{2} \frac{\lot}{g^2 (|q_0|^2 + |p_0|^2)} f'f(u_0)
    + \sum_{|t| = p+1} \frac{1}{\sigma(t)} \left( \Phi(t) - \frac{1}{t!} \right) F(t)(u_0)
    \\
    &=
    - \frac{1}{2} \lot \frac{1}{|q_0|^2 + |p_0|^2} u_0
    + \sum_{|t| = p+1} \frac{1}{\sigma(t)} \left( \Phi(t) - \frac{1}{t!} \right) \frac{1}{|q_0|^2 + |p_0|^2} \scp{ F(t)(u_0) }{ u_0 } u_0
    \\
    &=
    0.
  \end{aligned}
  \end{equation}
  Hence, the RRK method has an order of accuracy $p+1$.
\end{proof}

\printbibliography

\end{document}